\def\ov#1{{\overline{#1}}}
\def\wt#1{{\widetilde{#1}}}
\newcommand{\MV}{\operatorname{MV}}
\newcommand{\conv}{\operatorname{conv}}
\newcommand{\newton}{\operatorname{N}}
\newcommand{\vol}{\operatorname{vol}}
\newcommand{\Hom}{\operatorname{Hom}}
\newcommand{\Spec}{\operatorname{Spec}}
\newcommand{\Res}{\operatorname{Res}}
\newcommand{\Elim}{\operatorname{Elim}}
\newcommand{\rank}{\operatorname{rank}}
\renewcommand{\div}{\operatorname{div}}
\newcommand{\e}{{\rm e}}
\renewcommand{\i}{{\rm i}}
\newcommand{\coeff}{\operatorname{coeff}}
\newcommand{\Id}{{\rm id}}
\newcommand{\ord}{\operatorname{ord}}
\newcommand{\norm}{\operatorname{norm}}
\newcommand{\h}{\operatorname{h}}
\newcommand{\Div}{\operatorname{Div}}
\newcommand{\sat}{{\rm sat}}
\newcommand{\pr}{{\rm pr}}
\newcommand{\val}{\operatorname{val}}
\newcommand{\car}{\operatorname{char}}
\newcommand{\bfdeg}{\operatorname{\bf deg}}
\newcommand{\dd}{\, {\rm d}}
\newcommand{\init}{\operatorname{init}}
\newcommand{\A}{\mathbb{A}}
\newcommand{\C}{\mathbb{C}}
\newcommand{\F}{\mathbb{F}}
\newcommand{\K}{\mathbb{K}}
\renewcommand{\L}{\mathbb{L}}
\newcommand{\N}{\mathbb{N}}
\renewcommand{\P}{\mathbb{P}}
\newcommand{\Q}{\mathbb{Q}}
\newcommand{\R}{\mathbb{R}}
\newcommand{\T}{\mathbb{T}}
\newcommand{\Z}{\mathbb{Z}}
\newcommand{\cA}{{\mathcal A}}
\newcommand{\cB}{{\mathcal B}}
\newcommand{\bfa}{{\boldsymbol{a}}}
\newcommand{\bfb}{{\boldsymbol{b}}}
\newcommand{\bfc}{{\boldsymbol{c}}}
\newcommand{\bfd}{{\boldsymbol{d}}}
\newcommand{\bfe}{{\boldsymbol{e}}}
\newcommand{\bff}{{\boldsymbol{f}}}
\newcommand{\bfn}{{\boldsymbol{n}}}
\newcommand{\bft}{{\boldsymbol{t}}}
\newcommand{\bfu}{{\boldsymbol{u}}}
\newcommand{\bfx}{{\boldsymbol{x}}}
\newcommand{\bfz}{{\boldsymbol{z}}}
\newcommand{\bfN}{{\boldsymbol{N}}}
\newcommand{\bfF}{{\boldsymbol{F}}}
\newcommand{\bfcA}{{\boldsymbol{\cA}}}
\newcommand{\bftheta}{{\boldsymbol{\theta}}}
\newcommand{\bfzeta}{{\boldsymbol{\zeta}}}
\newcommand{\bfxi}{{\boldsymbol{\xi}}}
\newcommand{\bfzero}{{\boldsymbol{0}}}
\newcommand{\bfone}{{\boldsymbol{1}}}
\newcounter{thm}
\numberwithin{thm}{section}
\numberwithin{equation}{section}
\theoremstyle{definition}
\newtheorem{definition}[thm]{Definition}
\newtheorem{remark}[thm]{Remark}
\newtheorem{example}[thm]{Example}
\theoremstyle{plain}
\newtheorem{lemma}[thm]{Lemma}
\newtheorem{proposition}[thm]{Proposition}
\newtheorem{theorem}[thm]{Theorem}
\newtheorem{corollary}[thm]{Corollary}
\begin{document}

\title{A Poisson formula for the sparse resultant}

\author[D'Andrea]{Carlos D'Andrea}
\address{D'Andrea:
Departament d'{\`A}lgebra i Geometria, Universitat de Barcelona.
Gran Via~585, 08007 Barcelona, Spain}
\email{cdandrea@ub.edu}
\urladdr{\url{http://atlas.mat.ub.es/personals/dandrea/}}

\author[Sombra]{Mart{\'\i}n~Sombra}
\address{Sombra: ICREA and Departament d'{\`A}lgebra i Geometria, Universitat de Barcelona.
Gran Via~585, 08007 Barcelona, Spain}
\email{sombra@ub.edu}
\urladdr{\url{http://atlas.mat.ub.es/personals/sombra/}}

\date{\today} \subjclass[2010]{Primary 14M25; Secondary 13P15, 52B20.}
\keywords{Resultants, eliminants, multiprojective toric varieties,
  Poisson formula, hidden variables} 
\thanks{D'Andrea was partially
  supported by the MICINN research project MTM2010-20279.  Sombra was
  partially supported by the MINECO research project MTM2012-38122-C03-02.}

\begin{abstract}
  We present a Poisson formula for sparse resultants and a formula for
  the product of the roots of a family of Laurent polynomials, which are
  valid for arbitrary families of supports.

  To obtain these formulae, we show that the sparse resultant
  associated to a family of supports can be identified with the
  resultant of a suitable multiprojective toric cycle in the sense of
  R\'emond.  This connection allows to study sparse resultants using
  multiprojective elimination theory and intersection theory of toric
  varieties.
\end{abstract}
\maketitle

\section{Introduction} \label{sec:introduction}

Sparse resultants are widely used in polynomial
equation solving, a fact that has sparked a lot of interest in their
computational and applied aspects, see for instance
\cite{CanEmi:sbasr, Stu02, DAndrea:msfsr,
  JeronimoKrickSabiaSombra:cccf, CLO05,DicEmi2005:spe,
  JeronimoMateraSolernoWaissbein:dtss}. They have also been studied
from a more theoretical point of view because of their connections
with combinatorics, toric geometry, residue theory, and hypergeometric
functions \cite{GKZ94, Stu94, CatDicStu:rr, Koh97:npnfmvprse,
  CatDicStu:rhf, Est10:npdp}.

{Sparse elimination theory} focuses on ideals and varieties defined by
Laurent polynomials with given supports, in the sense that the
exponents in their monomial expansion are \emph{a priori}
determined. The classical approach to this theory consists in
regarding such Laurent polynomials as global sections of line bundles
on a suitable projective {toric variety}.  Using this interpretation,
sparse elimination theory can be reduced to projective elimination
theory.  In particular, sparse resultants can be studied \emph{via}
the Chow form of this projective toric variety as it is done in
\cite{PS93, GKZ94, Stu94}.  This approach works correctly when all
considered line bundles are very ample, but might fail otherwise. In
particular, important results obtained in this way, like the product
formulae due to Pedersen and Sturmfels \cite[Theorem 1.1 and
Proposition 7.1]{PS93}, do not hold for families of Laurent
polynomials with arbitrary supports.

In this paper, we define and study sparse resultants using the
{multiprojective} elimination theory introduced by R\'emond in
\cite{Remond2001:em} and further developed in our joint paper with
Krick \cite{DKS2011:hvmsan}.  This approach gives a better framework
to understand sparse elimination theory. In particular, it allows to
understand precisely in which situations some classical formulae for
sparse resultants hold, and how to modify them to work in general.

In precise terms, let $M\simeq \Z^{n}$ be a lattice of rank $n\ge0 $
and $N=\Hom(M,\Z)$ its dual lattice. Let $\T_{M}=\Hom(M,\C^{\times})\simeq
(\C^{\times})^{n}$ be the associated algebraic torus over~$\C$ and,
for $a\in M$, we denote by $\chi^{a}\colon \T_{M}\to \C^{\times}$ the
corresponding character.

Let $\cA_{i}$, $i=0,\dots, n$, be a family of $n+1$ nonempty finite
subsets of $M$ and put
\begin{displaymath} 
\bfcA=(\cA_{0}, \dots, \cA_{n}).
\end{displaymath}
For each $i$, consider a set
of $\#\cA_{i}$ variables  $\bfu_{i}=\{u_{i,a}\}_{a\in \cA_{i}}$ and let
\begin{equation}\label{Fi}
  F_{i}=\sum_{a\in \cA_{i}}u_{i,a} \chi^{a}\in \C[\bfu_{i}][M] 
\end{equation}
be the general Laurent polynomial with support $\cA_{i}$, where
we denote by $\C[\bfu_{i}][M]\simeq \C[\bfu_{i}][t_{1}^{\pm1}, \dots,
t_{n}^{\pm1}]$ the group $\C[\bfu_{i}]$-algebra of $M$.
Consider also the incidence variety given by
\begin{displaymath} 
  \Omega_{\bfcA}=\{(\xi,\bfu) \mid
  F_{0}(\bfu_{0},\xi)=\dots=F_{n}(\bfu_{n},\xi)=0\} \subset \T_{M}\times
  \prod_{i=0}^{n}\P(\C^{\cA_{i}}).
\end{displaymath} 
It is a subvariety of codimension $n+1$ defined over $\Q$.

We define the \emph{$\bfcA$-resultant} or \emph{sparse resultant},
denoted by $\Res_{\bfcA}$, as any primitive polynomial in
$\Z[\bfu_{0},\dots, \bfu_{n}]$ giving an equation for the direct image
$\pi_{*}\Omega_{\bfcA}$ (Definition~\ref{def:4}) where 
\begin{displaymath}
\pi\colon \T_{M}\times
\prod_{i=0}^{n}\P(\C^{\cA_{i}})\to \prod_{i=0}^{n}\P(\C^{\cA_{i}})
\end{displaymath}
is the projection onto the second factor. It is well-defined up
to a sign. This notion of sparse resultant coincides with the one
proposed by Esterov in \cite[Definition~3.1]{Est10:npdp}.

The informed reader should be aware that the $\bfcA$-resultant is
usually defined as an irreducible polynomial in $\Z[\bfu]$ giving an
equation for the Zariski closure $\ov{\pi(\Omega_{\bfcA})}$, if
this is a hypersurface, and as $1$ otherwise, as it is done in \cite{GKZ94, Stu94}.  In
this paper, we call this object the \emph{$\bfcA$-eliminant} or
\emph{sparse eliminant} instead, and we denote it by $\Elim_{\bfcA}$.
It follows from these definitions that 
\begin{equation} \label{eq:13}
  \Res_{\bfcA}=\pm\Elim_{\bfcA}^{d_{\bfcA}},
\end{equation}
with $d_{\bfcA}$ equal to the degree of the restriction of $\pi$ to
the incidence variety $\Omega_{\bfcA}$.  This degree is not
necessarily equal to 1 and so, in general, the sparse resultant and the
sparse eliminant are different objects, see Example \ref{exm:5}.

The definition of the sparse resultant in terms of a direct image rather
than just a set-theoretical image, has better properties and produces more uniform
statements. For instance, the partial degrees of the sparse resultant
are given, for $i=0,\dots, n$,  by 
\begin{equation*}
  \deg_{\bfu_{i}}(\Res_{\bfcA})= \MV_{M}(\Delta_{0}, \dots,
  \Delta_{i-1},\Delta_{i+1},\dots, \Delta_{n}),
\end{equation*}
where $\Delta_{i} \subset M_{\R}$ is the lattice polytope given as the
convex hull of $\cA_{i}$ and  $\MV_{M}$ is the mixed volume
function associated to the lattice $M$ (Proposition
\ref{prop:6}). This equality holds for \emph{any} family of supports,
independently of their combinatorics.

One of our motivations  comes from the need of a general Poisson
formula for sparse resultants for our joint work with Galligo on the
distribution of  roots  of families of Laurent polynomials
\cite{DGS2012:qessspe}. By a {\it Poisson formula} 
we mean  an equality of the form 
\begin{equation*}
  \Res_{\bfcA}(f_{0},f_{1},\dots, f_{n})= Q(f_{1},\dots, f_{n}) 
  \cdot \prod_{\xi} f_{0}(\xi)^{m_{\xi}} ,
\end{equation*}
 where $f_{i}\in\C[M]$ is a generic Laurent
polynomial  with support $\cA_{i}$, $i=0,\dots, n$,
 the product is over the roots $\xi$ of $f_{1},\dots, f_{n}$ in 
\begin{math}
  \T_{M},
\end{math}
$m_\xi$ is the multiplicity of $\xi$, and $Q\in \Q(\bfu_{1},\dots,
\bfu_{n})^{\times}$ is a rational function to be determined.  A
formula of this type was stated by Pedersen and Sturmfels in
\cite{PS93} but it does not hold for arbitrary supports. An attempt to
make it valid in full generality was made by Minimair in \cite{Min03},
but his approach has some inaccuracies. 

The main result of this paper is the Poisson formula for the sparse
resultant given below, which holds for {\em any}  family of
supports.  We introduce some notation to state this properly.

Let $v\in N\setminus \{0\}$ and put $v ^{\bot}\cap M \simeq \Z^{n-1}$
for its orthogonal lattice. For $i=1,\dots, n$, we set $\cA_{i,v}$
for the subset of points of $\cA_{i}$ of minimal weight in the
direction of $v$. This gives a family of $n$ nonempty finite subsets
of translates of the lattice $v ^{\bot}\cap M$.  We denote by
$\Res_{\cA_{1,v },\dots,\cA_{n,v}}$ the corresponding  sparse
resultant, also called the sparse resultant of 
$\cA_{1},\dots, \cA_{n}$ in the direction of $v$. Given
Laurent polynomials $f_{i}\in\C[M]$ with support $\cA_{i}$,
$i=1,\dots, n$, we denote by
\begin{displaymath}
  \Res_{\cA_{1,v
  },\dots,\cA_{n,v}}(f_{1,v}, \dots, f_{n,v}) \in \C
\end{displaymath}
the evaluation of this directional sparse resultant at the coefficients of
the initial part of the $f_{i}$'s in the direction of $v$, see Definition \ref{def:5}
for details.
We also set $h_{\cA_{0}}(v)= \min_{a\in \cA_{0}}\langle v,a\rangle$
for the value at $v$ of the support function of~$\cA_{0}$.

\begin{theorem} \label{thm:3} Let $\cA_i\subset M$ be a nonempty
  finite subset and $f_i\in \C[M]$ a Laurent polynomial with support
  contained in $ \cA_i$, $i=0,\dots,n$. Suppose that for all $v \in
  N\setminus \{0\}$ we have that $\Res_{\cA_{1,v },\dots,\cA_{n,v
    }}(f_{1,v },\dots,f_{n,v })\ne 0$. Then
\begin{equation*}
  \Res_{\cA_{0},\cA_1,\ldots,\cA_n}(f_{0},f_{1},\dots,f_{n})= \pm
\prod_{v }\Res_{\cA_{1,v},\dots,\cA_{n,v}}(f_{1,v},\dots,f_{n,v})^{-h_{\cA_{0}}(v )}   \cdot \prod_{\xi} f_{0}(\xi)^{m_{\xi}},
\end{equation*}
the first product being over the primitive vectors $v\in N$ and the
second over the roots $\xi$ of $f_{1},\dots, f_{n}$ in $\T_{M}$, and
where $m_\xi$ denotes the multiplicity of $\xi$.
\end{theorem}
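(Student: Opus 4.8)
The plan is to reduce the statement to a Poisson-type formula for R\'emond's resultant of a multiprojective cycle, apply it to the toric cycle attached to $\bfcA$, and translate the two factors on the right-hand side through the dictionary between toric geometry and Newton polytopes. The first step is to invoke the identification of $\Res_{\bfcA}$ with the R\'emond resultant of the multiprojective toric cycle $X_{\bfcA}\subset\prod_{i=0}^{n}\P(\C^{\cA_{i}})$ associated to the family of supports, which turns the theorem into a statement about the values this resultant takes when the general polynomials $F_{i}$ are specialized to the $f_{i}$. I would fix a complete fan $\Sigma$ on $N_{\R}$ refining the normal fans of all the $\Delta_{i}$, with associated complete toric variety $X_{\Sigma}$ having $\T_{M}$ as its dense orbit; each $\cA_{i}$ then determines a basepoint-free line bundle $L_{i}$ on $X_{\Sigma}$ with global section $F_{i}=\sum_{a\in\cA_{i}}u_{i,a}\chi^{a}$, and $X_{\bfcA}$ is the direct image of the fundamental cycle of $X_{\Sigma}$ under the morphism defined by the $L_{i}$. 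It is essential to work with the \emph{cycle} $X_{\bfcA}$ and with an auxiliary toric variety $X_{\Sigma}$ that need not be projectively embedded by the $L_{i}$, rather than with a single projective toric variety as in \cite{PS93,GKZ94,Stu94}: this is exactly what records the intersection multiplicities the non-ample case would otherwise destroy, and it is what makes the formula hold for arbitrary supports.

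Next, I would isolate the factor $\prod_{\xi}f_{0}(\xi)^{m_{\xi}}$ by keeping $f_{0}$ generic, working with $\Res_{\bfcA}(\bfu_{0},f_{1},\dots,f_{n})\in\C[\bfu_{0}]$. The hypothesis $\Res_{\cA_{1,v},\dots,\cA_{n,v}}(f_{1,v},\dots,f_{n,v})\ne0$ for all primitive $v$ says that the initial forms of $f_{1},\dots,f_{n}$ have no common zero on any torus-invariant divisor of $X_{\Sigma}$; hence each factor on the right-hand side is nonzero, and by Bernstein's theorem $f_{1},\dots,f_{n}$ have only finitely many common zeros $\xi_{1},\dots,\xi_{r}$ in $\T_{M}$, with multiplicities $m_{1},\dots,m_{r}$ adding up to $\MV_{M}(\Delta_{1},\dots,\Delta_{n})=\deg_{\bfu_{0}}(\Res_{\bfcA})$ by Proposition~\ref{prop:6}. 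Since the specialized system has no zero on the toric boundary, $\Res_{\bfcA}(\bfu_{0},f_{1},\dots,f_{n})$ vanishes precisely where $F_{0}(\bfu_{0},\cdot)$ kills one of the $\xi_{j}$, and a local computation of the resultant along the hyperplane $\{\sum_{a}u_{0,a}\chi^{a}(\xi_{j})=0\}$ shows the order of vanishing there is exactly $m_{j}$; comparing total degrees,
\begin{displaymath}
  \Res_{\bfcA}(\bfu_{0},f_{1},\dots,f_{n})=c(f_{1},\dots,f_{n})\cdot\prod_{j=1}^{r}\Big(\sum_{a\in\cA_{0}}u_{0,a}\,\chi^{a}(\xi_{j})\Big)^{m_{j}}
\end{displaymath}
for some nonzero constant $c(f_{1},\dots,f_{n})\in\C^{\times}$. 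Evaluating at the coefficients of $f_{0}$ gives $\Res_{\bfcA}(f_{0},\dots,f_{n})=c(f_{1},\dots,f_{n})\cdot\prod_{\xi}f_{0}(\xi)^{m_{\xi}}$, so the whole content of the theorem becomes the identity
\begin{displaymath}
  c(f_{1},\dots,f_{n})=\pm\prod_{v}\Res_{\cA_{1,v},\dots,\cA_{n,v}}(f_{1,v},\dots,f_{n,v})^{-h_{\cA_{0}}(v)}.
\end{displaymath}

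To identify this leading coefficient I would compute $c$ on $X_{\Sigma}$ by intersection theory, equivalently through the toric degeneration $u_{i,a}\mapsto t^{\langle v,a\rangle}u_{i,a}$ that deforms each $f_{i}$ into its initial part $f_{i,v}$. The Poisson formula for R\'emond resultants of cycles obtained from \cite{Remond2001:em,DKS2011:hvmsan} expresses $c$ as a product, over the torus-invariant prime divisors $D_{v}$ of $X_{\Sigma}$, of the resultant of the restricted sections $F_{1}|_{D_{v}},\dots,F_{n}|_{D_{v}}$ raised to the multiplicity with which $D_{v}$ appears in the Cartier divisor of $L_{\cA_{0}}$. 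The toric dictionary finishes the job: $D_{v}$ is the complete toric variety of the lattice $v^{\bot}\cap M$ with polytope data the faces $\Delta_{i,v}$, so $\Res(F_{1}|_{D_{v}},\dots,F_{n}|_{D_{v}})$ is, up to a monomial trivialization that cancels, the directional sparse resultant $\Res_{\cA_{1,v},\dots,\cA_{n,v}}$, and evaluating its sections at the $f_{i}$ produces $\Res_{\cA_{1,v},\dots,\cA_{n,v}}(f_{1,v},\dots,f_{n,v})$, while the multiplicity of $D_{v}$ in the divisor of $L_{\cA_{0}}$ is $-h_{\cA_{0}}(v)$, by $\ord_{D_{v}}(\chi^{a})=\langle v,a\rangle$ and the min-convention for the support function. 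For $v$ outside the normal fan of $\Delta_{1}+\dots+\Delta_{n}$ each $\Delta_{i,v}$ is a single point, all partial degrees of $\Res_{\cA_{1,v},\dots,\cA_{n,v}}$ vanish, and the corresponding directional resultant is $\pm1$, so the product over all primitive $v$ is finite and independent of the chosen $\Sigma$; the monomial trivializations, the sign of each directional resultant, and the labelling of the $\xi_{j}$ only affect the overall sign, matching the $\pm$ in the statement.

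The main obstacle is the Poisson formula for R\'emond resultants of cycles and its compatibility with restriction to the toric boundary: one must control how the resultant of an $n$-dimensional cycle in a product of projective spaces degenerates when a single section is specialized, identify the limit as a product of resultants on the components of the special fibre, and pin down the exponents as intersection numbers --- all of this \emph{without} any positivity hypothesis, which is precisely where the classical argument of Pedersen and Sturmfels breaks down and for which the multiprojective elimination theory of \cite{Remond2001:em,DKS2011:hvmsan} is needed. A secondary but genuine difficulty is the bookkeeping around the boundary: checking that $F_{i}|_{D_{v}}$ is the directional polynomial up to a monomial factor that drops out of the resultant, that the weight is $-h_{\cA_{0}}(v)$ with the right sign, and that all the implicit choices wash out.
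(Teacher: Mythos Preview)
Your outline captures the right geometric ingredients --- the identification of $\Res_{\bfcA}$ with the R\'emond resultant of $Z_{\bfcA}$ and the boundary decomposition via Proposition~\ref{prop:11} --- but the order of operations differs from the paper's, and this creates two genuine gaps.

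First, the constant. You assert that the Poisson formula for R\'emond resultants from \cite{Remond2001:em,DKS2011:hvmsan} identifies $c(f_{1},\dots,f_{n})$ as the product of directional resultants up to sign. In fact those references (specifically \cite[Proposition~1.40]{DKS2011:hvmsan}) give such identities only up to a unit in the base field: each time one specializes a slot of the resultant or passes to a boundary component, one picks up an undetermined $\lambda\in K^{\times}$. The paper pins this down to $\pm1$ by first proving the formula for the \emph{generic} polynomials $F_{1},\dots,F_{n}$ over $\F=\Q(\bfu_{1},\dots,\bfu_{n})$ (Theorem~\ref{thm:1}): the constant then lies in $\Q^{\times}$, and a $p$-adic valuation argument using Smirnov's refinement of Bernstein (Proposition~\ref{prop:8}) forces $\ord_{p}$ of the constant to vanish for every prime $p$. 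Your claim that ``the monomial trivializations \dots\ only affect the overall sign'' is precisely what must be proved, and this arithmetic step is the missing idea.

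Second, the specialization. Your factorization $\Res_{\bfcA}(\bfu_{0},f_{1},\dots,f_{n})=c\cdot\prod_{j}F_{0}(\xi_{j})^{m_{j}}$ with the correct multiplicities requires either that the divisors of the specialized linear forms intersect $Z_{\bfcA}$ properly at every stage (so that \cite[Proposition~1.40 and Corollary~1.38]{DKS2011:hvmsan} apply iteratively), or a separate continuity argument. The hypothesis on directional resultants only controls the boundary behaviour of the full system, not the intermediate intersections, so the ``local computation'' you invoke is not justified as stated. The paper avoids this by a deformation: it applies the generic formula of Theorem~\ref{thm:1} to $f_{i}+tF_{i}$, interprets $\prod_{\xi}F_{0}(\xi)^{m_{\xi}}$ as the norm of $F_{0}$ in the quotient algebra (Remark~\ref{rem:4}), and proves via Lemmas~\ref{lemm:4} and~\ref{lemm:5} that this norm specializes correctly at $t=0$. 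In short, the paper's route is \emph{generic formula first, then specialize via the norm}; both halves carry nontrivial content that your direct approach does not supply.
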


Both products in the above formula are finite.  Indeed,
$\Res_{\cA_{1,v},\dots,\cA_{n,v}}\ne1$ only if $v$ is an inner
normal to a facet of the Minkowski sum
$\sum_{i=1}^{n}\Delta_{i}$. Moreover, by Bernstein theorem
\cite[Theorem B]{Ber75}, the hypothesis that no directional sparse resultant
vanishes implies that the set of roots of the family $f_{i}$,
$i=1,\dots, n$, is finite.

\begin{example} \label{exm:4}
Let $M=\Z^{2}$ and consider the family of nonempty
finite subsets of $\Z^{2}$
\begin{displaymath}
\cA_{0}=\cA_{1}=\{(0,0), (-1,0),(0,-1)\}, \quad       
\cA_{2}=\{(0,0),(1,0), (0,1), (0,2)\}. 
\end{displaymath}
Consider also a family of generic 
Laurent polynomials in $\C[M]=\C[t_{1}^{\pm1},t_{2}^{\pm1}]$
supported in these subsets, that is 
\begin{displaymath}
  f_{i}=\alpha_{i,0}+\alpha_{i,1}\, t_{1}^{-1}+\alpha_{i,2}\,
  t_{2}^{-1}, i=0,1, \quad  f_{2}=\alpha_{2,0}+\alpha_{2,1}\,
  t_{1}+\alpha_{2,2}\, t_{2}
  +\alpha_{2,3}\, t_{2}^{2}.
\end{displaymath}
with $\alpha_{i,j}\in \C$. 

The resultant $\Res_{\cA_{0},\cA_{1}, \cA_{2} }$ is a polynomial in
two sets of 3 variables and a set of~4~variables. It is
multihomogeneous of multidegree $(3,3,1)$ and has 24 terms. 

Considering the Minkowski sum $\Delta_{1}+\Delta_{2}$ we obtain that,
in this case, the only nontrivial directional sparse resultants are
those corresponding to the vectors $(1,0)$, $(1,1)$, $(0,1)$,
$(-1,0)$, $(-2,-1)$, and $(0,-1)$. Computing them together with their
corresponding exponents in the Poisson formula, Theorem \ref{thm:3}
shows that
\begin{displaymath}
  \Res_{\cA_{0},\cA_{1}, \cA_{2} }(f_{0},f_{1},f_{2})= \pm
  \alpha_{1,2}\, \alpha_{1,1}^{2} \, \alpha_{2,0}\prod_{i=1}^{3}f_{0}(\xi_{i}).
\end{displaymath}
where the $\xi_{i}$'s are the solutions of the system of equations
$f_{1}=f_{2}=0$. 
    
Each of the supports generates the lattice $\Z^{2}$, and so
$\Elim_{\bfcA}=\Res_{\bfcA}$. However, the formula in \cite[Theorem
1.1]{PS93} gives in this case an exponent 1 to the coefficient
$\alpha_{1,1}$, instead of 2.  Hence, this formula does not work in
this case.  Minimair's reformulation of the Pedersen-Sturmfels formula
in \cite[Theorem 8]{Min03} gives an expression for the exponent of
$\alpha_{1,1}$ that evaluates to $\frac00$, and so it also fails in
this case.
\end{example}

As a by-product of our approach, we obtain a formula for the product
of the roots of a family of Laurent polynomials. For a nonzero complex
number $\gamma\in \C^{\times}$ and $v\in N$, we consider the point in
the torus $\gamma^{v}\in \T_{M}$ given by the homomorphism $M\to
\C^{\times}$, $a\mapsto \langle a,v\rangle$. 

    \begin{corollary} \label{cor:4} Let $\cA_i\subset M$ be a nonempty
      finite subset and $f_i\in \C[M]$ a Laurent polynomial with
      support contained in $ \cA_i$, $i=1,\dots,n$. Suppose that for all $v \in N\setminus
      \{0\}$ we have that
      $\Res_{\cA_{1,v },\dots,\cA_{n,v }}(f_{1,v
      },\dots,f_{n,v })\ne 0$. Then
  \begin{displaymath}
\prod_{\xi} \xi^{m_{\xi}}= \pm
  \prod_{v}\Res_{\cA_{1, v},\dots,\cA_{n,v}} (f_{1,v},\dots, f_{n,v}) ^v,
  \end{displaymath}
  the first product being over the roots $\xi$ of $f_{1},\dots, f_{n}$
  in $\T_{M}$ and the second over the primitive vectors $v\in N$, and
  where $m_\xi$ denotes the multiplicity of $\xi$.
Equivalently, for $a\in M,$
  \begin{displaymath}
\prod_{\xi} \chi^{a}(\xi)^{m_{\xi}}= \pm
  \prod_{v}\Res_{\cA_{1,v},\dots,\cA_{n,v}} (f_{1,v},\dots, f_{n,v}) ^{\langle
    a,v\rangle}.
  \end{displaymath}
\end{corollary}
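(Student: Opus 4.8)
The plan is to deduce the corollary from the Poisson formula (Theorem~\ref{thm:3}) applied to an auxiliary family obtained by adjoining a two-element support. Since the two displayed identities are equivalent (apply the characters $\chi^{a}$, using $\chi^{a}(\gamma^{v})=\gamma^{\langle a,v\rangle}$), it is enough to prove that for every $a\in M$
\[
\prod_{\xi}\chi^{a}(\xi)^{m_{\xi}}=\pm\prod_{v}R_{v}^{\langle a,v\rangle},\qquad R_{v}:=\Res_{\cA_{1,v},\dots,\cA_{n,v}}(f_{1,v},\dots,f_{n,v}),
\]
the first product running over the roots $\xi$ of $f_{1},\dots,f_{n}$ in $\T_{M}$ (a finite set, by Bernstein's theorem under the present hypothesis) and the second over the primitive $v\in N$. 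For $a=0$ both sides equal $1$, so I would assume $a\ne 0$ and set $\cA_{0}:=\{0,a\}$. Since the $R_{v}$ involve only $\cA_{1},\dots,\cA_{n}$, the hypothesis of the corollary is exactly what is needed to apply Theorem~\ref{thm:3} to the family $(\cA_{0},\cA_{1},\dots,\cA_{n})$ with any $f_{0}$ supported in $\cA_{0}$.

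The crux is to evaluate $\Res_{\cA_{0},\cA_{1},\dots,\cA_{n}}(\chi^{a},f_{1},\dots,f_{n})$ in two ways. Directly: $\chi^{a}$ is supported in $\{a\}\subset\cA_{0}$, here $h_{\cA_{0}}(v)=\min\{0,\langle a,v\rangle\}$, and $\chi^{a}(\xi)\ne 0$ for each root, so Theorem~\ref{thm:3} yields
\[
\Res_{\cA_{0},\cA_{1},\dots,\cA_{n}}(\chi^{a},f_{1},\dots,f_{n})=\pm\Bigl(\prod_{v}R_{v}^{-\min\{0,\langle a,v\rangle\}}\Bigr)\cdot\prod_{\xi}\chi^{a}(\xi)^{m_{\xi}}.
\]
For the second way I would use the translation invariance of the sparse resultant: replacing $\cA_{0}$ by $b+\cA_{0}$ and $f_{0}$ by $\chi^{b}f_{0}$ merely relabels the homogeneous coordinates of $\P(\C^{\cA_{0}})$ and leaves $\Omega_{\bfcA}$, hence its direct image $\pi_{*}\Omega_{\bfcA}$, unchanged; this is immediate from the definition. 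Taking $b=-a$ turns $\chi^{a}$ into $1$ and $\cA_{0}$ into $\{-a,0\}$, so $\Res_{\cA_{0},\cA_{1},\dots,\cA_{n}}(\chi^{a},f_{1},\dots,f_{n})=\pm\Res_{\{-a,0\},\cA_{1},\dots,\cA_{n}}(1,f_{1},\dots,f_{n})$; applying Theorem~\ref{thm:3} to $(\{-a,0\},\cA_{1},\dots,\cA_{n})$ with $f_{0}=1$ (so that $\prod_{\xi}1^{m_{\xi}}=1$ and $h_{\{-a,0\}}(v)=\min\{-\langle a,v\rangle,0\}=-\max\{0,\langle a,v\rangle\}$) gives
\[
\Res_{\cA_{0},\cA_{1},\dots,\cA_{n}}(\chi^{a},f_{1},\dots,f_{n})=\pm\prod_{v}R_{v}^{\max\{0,\langle a,v\rangle\}}.
\]

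Comparing the two expressions and cancelling — every $R_{v}$ is nonzero by hypothesis, and $R_{v}\ne 1$ only for the finitely many $v$ that are inner normals to facets of $\sum_{i=1}^{n}\Delta_{i}$, so both products are finite — would give
\[
\prod_{\xi}\chi^{a}(\xi)^{m_{\xi}}=\pm\prod_{v}R_{v}^{\,\max\{0,\langle a,v\rangle\}+\min\{0,\langle a,v\rangle\}}=\pm\prod_{v}R_{v}^{\langle a,v\rangle},
\]
by the elementary identity $\max\{0,x\}+\min\{0,x\}=x$; this is the required scalar identity, and the corollary follows. The only genuinely non-mechanical point I anticipate is the second evaluation of $\Res_{\cA_{0},\dots}(\chi^{a},\dots)$: applying the Poisson formula directly to $f_{0}=\chi^{a}$ and to $f_{0}=1$ and dividing would yield $\prod_{\xi}\chi^{a}(\xi)^{m_{\xi}}$ only as a ratio of two resultants for $\cA_{0}=\{0,a\}$, which is circular, whereas translation invariance converts $\Res_{\cA_{0},\dots}(\chi^{a},\dots)$ into a product of directional resultants and closes the argument. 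After that, only bookkeeping of the support-function exponents~$h$ remains.
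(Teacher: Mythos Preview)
Your argument is correct, but the paper proceeds more directly. Instead of adjoining the two-element support $\{0,a\}$ and invoking translation invariance to get two Poisson expressions to equate, the paper applies Theorem~\ref{thm:3} once, with the \emph{singleton} support $\cA_{0}=\{a\}$ and $f_{0}=\chi^{a}$. In that case $h_{\cA_{0}}(v)=\langle a,v\rangle$, while the left-hand side $\Res_{\{a\},\cA_{1},\dots,\cA_{n}}(\chi^{a},f_{1},\dots,f_{n})$ is $\pm1$: by the degree formula (Proposition~\ref{prop:6}) this resultant has degree $\MV_M(\Delta_{1},\dots,\Delta_{n})$ in the single variable $u_{0,a}$ and degree $0$ in all other $\bfu_{i}$ (since a point contributes $0$ to any mixed volume), hence $\Res_{\{a\},\cA_{1},\dots,\cA_{n}}=\pm u_{0,a}^{D}$ as in Example~\ref{exm:5}. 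The identity $\prod_{\xi}\chi^{a}(\xi)^{m_{\xi}}=\pm\prod_{v}R_{v}^{\langle a,v\rangle}$ then drops out immediately. Your two-evaluation trick is exactly the device the paper uses in the proof of Theorem~\ref{thm:1} to handle the generic case (equation~\eqref{eq:2}), so it is certainly legitimate; it just does more work than necessary once Theorem~\ref{thm:3} is available for specializations.
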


This result makes explicit both the scalar factor and the exponents
in Khovanskii's formula in \cite[\S 6, Theorem 1]{Koh97:npnfmvprse}.

As a consequence of the Poisson formula in Theorem \ref{thm:3}, we
obtain an extension to the sparse setting of the ``hidden variable''
technique for solving polynomial equations, which is crucial for
computational purposes \cite[\S3.5]{CLO05}, see Theorem \ref{thm:4}
below. 

To do this, let $n\ge 1$ and set $M=\Z^{n}$ and, for $i=1,\dots, n$,
consider the general Laurent polynomials $F_{i}\in
\Z[\bfu_{i}][t_{1}^{\pm1}, \dots, t_{n}^{\pm1}]$ with support
$\cA_{i}$ as in \eqref{Fi}. Each $F_{i}$ can be alternatively
considered as a Laurent polynomial in the variables $\bft' :=
\{t_1,\dots,t_{n-1}\}$ and coefficients in the ring
$\Z[\bfu_{i}][{t_n}^{\pm1}]$. In this case, we denote it by
$F_{i}(\bft')$.  The support of this Laurent polynomial is the
nonempty finite subset $\varpi(\cA_i)\subset \Z^{n-1},$ where $\varpi
\colon \R^{n}\rightarrow \R^{n-1}$ denotes the projection onto the
first $n-1$ coordinates of $\R^{n}$. We then set
\begin{equation}
  \label{eq:42}
 \Res_{\cA_{1}, \dots, \cA_{n}}^{t_{n}}=\Res_{\varpi(\cA_1),\dots,\varpi(\cA_{n})}
(F_1(\bft') ,\dots, F_n(\bft')) \in \C[\bfu_{1},\dots, \bfu_{n}][t_n^{\pm1}].  
\end{equation}

In other words, we ``hide'' the variable $t_{n}$ among the
coefficients of the $F_{i}$'s and we consider the corresponding sparse
resultant.

The following result shows that the roots of this Laurent polynomial
coincide with the $t_{n}$-coordinate of the roots of the family
$f_{i}$, $i=1,\dots, n$, and that their corresponding multiplicities are preserved. 
It generalizes and precises \cite[Proposition 5.15]{CLO05},
which is stated for generic families of dense polynomial equations.

\begin{theorem} \label{thm:4}
  Let $\cA_i\subset \Z^{n}$ be a nonempty finite subset and $f_i\in
  \C[t_{1}^{\pm1},\dots, t_{n}^{\pm1}]$ a Laurent polynomial with
  support contained in $ \cA_i$, $i=1,\dots,n$.  Suppose that for all
  $v \in \Z^{n}\setminus \{0\}$ we have that $\Res_{\cA_{1,v
    },\dots,\cA_{n,v }}(f_{1,v },\dots,f_{n,v })\ne 0$. Then
  there exist $\lambda\in \C^{\times}$ and $d\in \Z$ such that
  \begin{equation}    \label{eq:79}
\Res_{\cA_{1}, \dots, \cA_{n}}^{t_{n}}(f_{1},\dots,
  f_{n})=\lambda \,  t_{n}^{d}\, \prod_{\xi}(t_{n}-\xi_{n})^{m_{\xi}}, 
  \end{equation}
the product being being over the roots
  $\xi=(\xi_{1},\dots, \xi_{n})$ of $f_{1},\dots, f_{n}$ in
  $(\C^{\times})^{n}$, and where $m_\xi$ denotes the multiplicity of
  $\xi$.
\end{theorem}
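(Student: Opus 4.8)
The plan is to deduce the formula from the Poisson formula of Theorem~\ref{thm:3}, applied to an augmented family. Write $e_{n}$ for the last vector of the standard basis of $\Z^{n}$, set $\cA_{0}:=\{\bfzero,e_{n}\}$, and let $u_{0,\bfzero},u_{0,e_{n}}$ be the coefficient variables attached to $\cA_{0}$, so that the general Laurent polynomial with support $\cA_{0}$ is $F_{0}=u_{0,\bfzero}+u_{0,e_{n}}\,t_{n}$. The main step is the identity
\begin{equation}\label{eq:hvstar}
\Res_{\cA_{1},\dots,\cA_{n}}^{t_{n}}(F_{1},\dots,F_{n})\;=\;\pm\,t_{n}^{\,d}\cdot\bigl(\Res_{\cA_{0},\cA_{1},\dots,\cA_{n}}\bigr)\big|_{u_{0,\bfzero}\mapsto-t_{n},\ u_{0,e_{n}}\mapsto1}
\end{equation}
for a suitable $d\in\Z$, where on the right $\Res_{\cA_{0},\cA_{1},\dots,\cA_{n}}\in\Z[u_{0,\bfzero},u_{0,e_{n}},\bfu_{1},\dots,\bfu_{n}]$ and we substitute the two indicated values; both sides then lie in $\C[\bfu_{1},\dots,\bfu_{n}][t_{n}^{\pm1}]$. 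Granting \eqref{eq:hvstar}, it suffices to specialize the $\bfu_{i}$ to the coefficients of $f_{i}$ and invoke Theorem~\ref{thm:3}.

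To prove \eqref{eq:hvstar} I would compare incidence varieties. By \eqref{eq:42}, the left-hand side is obtained from the sparse resultant $\Res_{\varpi(\cA_{1}),\dots,\varpi(\cA_{n})}$ --- a primitive equation of the direct image $\pi'_{*}\Omega_{\varpi(\cA_{1}),\dots,\varpi(\cA_{n})}$ --- by replacing, in the $i$-th general Laurent polynomial, the coefficient of $\chi^{a'}$ by $\sum_{a\in\cA_{i},\,\varpi(a)=a'}u_{i,a}\,t_{n}^{a_{n}}$; writing $\xi=(\xi',t_{n})\in\T_{\Z^{n}}$, this substitution carries $F_{i}(\bft')$ to $F_{i}(\bfu_{i},\xi)$, so that the left-hand side of \eqref{eq:hvstar} is, up to a power of $t_{n}$, a primitive equation of the direct image of $\Omega_{\cA_{1},\dots,\cA_{n}}\subset\T_{\Z^{n}}\times\prod_{i=1}^{n}\P(\C^{\cA_{i}})$ under the projection $(\xi,\bfu)\mapsto(\bfu,\xi_{n})$. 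On the other hand, by definition the right-hand side of \eqref{eq:hvstar} is, again up to a power of $t_{n}$, a primitive equation of $\pi_{*}\Omega_{\cA_{0},\cA_{1},\dots,\cA_{n}}$ read on the chart $u_{0,e_{n}}=1$; and over $\{u_{0,e_{n}}\neq0\}$ the relation $u_{0,\bfzero}+u_{0,e_{n}}t_{n}=0$ forces $t_{n}=-u_{0,\bfzero}/u_{0,e_{n}}$, which identifies $\Omega_{\cA_{0},\cA_{1},\dots,\cA_{n}}$ with $\Omega_{\cA_{1},\dots,\cA_{n}}$ and carries $\pi$ to that same projection. Hence the two direct-image cycles coincide over $\{u_{0,e_{n}}\neq0\}$, their primitive equations agree up to sign, and the passage from the $\Gm$-coordinate $t_{n}$ to the $\P^{1}$-coordinate $(u_{0,\bfzero}:u_{0,e_{n}})$ accounts exactly for the monomial $t_{n}^{\,d}$; that no further discrepancy occurs is forced by comparing multidegrees, which agree by Proposition~\ref{prop:6} together with the standard mixed-volume identity $\MV_{\Z^{n}}(\Delta_{0},\Delta_{1},\dots,\widehat{\Delta_{i}},\dots,\Delta_{n})=\MV_{\Z^{n-1}}(\varpi(\Delta_{1}),\dots,\widehat{\varpi(\Delta_{i})},\dots,\varpi(\Delta_{n}))$ valid for the primitive segment $\Delta_{0}=[\bfzero,e_{n}]$. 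This is where the main difficulty lies: sparse resultants are in general not reduced (recall $\Res_{\bfcA}=\pm\Elim_{\bfcA}^{d_{\bfcA}}$, \eqref{eq:13}), so a purely set-theoretic comparison is not enough and one must match the direct-image \emph{cycles} with their multiplicities --- in particular one has to show that the two incidence projections have the same degree $d_{\bfcA}$ --- which is precisely what the multiprojective toric and intersection-theoretic machinery developed in the earlier sections is meant to handle.

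Finally, apply Theorem~\ref{thm:3} to the family $\cA_{0},\cA_{1},\dots,\cA_{n}$ and to the Laurent polynomials $f_{0}:=t_{n}-\tau$ (of support contained in $\cA_{0}$, for a parameter $\tau\in\C$) and $f_{1},\dots,f_{n}$. The directional sparse resultants occurring there are the $\Res_{\cA_{1,v},\dots,\cA_{n,v}}(f_{1,v},\dots,f_{n,v})$ for primitive $v\in\Z^{n}$ --- the support $\cA_{0}$ entering only through the exponent $-h_{\cA_{0}}(v)=-\min\{0,\langle v,e_{n}\rangle\}$ --- so the hypothesis of Theorem~\ref{thm:4} is exactly the one required, and moreover the family $f_{1},\dots,f_{n}$ has finitely many roots in $(\C^{\times})^{n}$. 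Since $f_{0}(\xi)=\xi_{n}-\tau$, Theorem~\ref{thm:3} yields
\begin{gather*}
\Res_{\cA_{0},\cA_{1},\dots,\cA_{n}}(t_{n}-\tau,f_{1},\dots,f_{n})=\pm\,\mu\prod_{\xi}(\tau-\xi_{n})^{m_{\xi}},\\
\mu:=\prod_{v}\Res_{\cA_{1,v},\dots,\cA_{n,v}}(f_{1,v},\dots,f_{n,v})^{-h_{\cA_{0}}(v)}\in\C^{\times},
\end{gather*}
where the product defining $\mu$ is finite and, crucially, does not involve $\tau$. Specializing \eqref{eq:hvstar} at the coefficients of $f_{1},\dots,f_{n}$ and reading $t_{n}$ for the formal variable $\tau$ then gives $\Res_{\cA_{1},\dots,\cA_{n}}^{t_{n}}(f_{1},\dots,f_{n})=\pm\,\mu\,t_{n}^{\,d}\prod_{\xi}(t_{n}-\xi_{n})^{m_{\xi}}$, which is \eqref{eq:79} with $\lambda=\pm\mu\in\C^{\times}$. (One could instead try to apply Theorem~\ref{thm:3} directly to $\varpi(\cA_{1}),\dots,\varpi(\cA_{n})$ with $t_{n}$ set to $\tau$, but then the scalar prefactor involves directional sparse resultants of the $f_{i}(\cdot,\tau)$, whose $\tau$-dependence is not manifestly a monomial, so one would still need an argument of the type of \eqref{eq:hvstar}, or an induction on $n$, to finish.)
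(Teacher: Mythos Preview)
Your overall strategy coincides with the paper's: both reduce Theorem~\ref{thm:4} to Theorem~\ref{thm:3} via an intermediate identity expressing $\Res_{\cA_{1},\dots,\cA_{n}}^{t_{n}}$ in terms of the sparse resultant for the augmented family $(\{\bfzero,\bfe_{n}\},\cA_{1},\dots,\cA_{n})$. In the paper this identity is Proposition~\ref{prop:12}, and your final paragraph (applying Theorem~\ref{thm:3} with $f_{0}=t_{n}-\tau$) is exactly how the paper concludes.

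The genuine difference is in how the intermediate identity \eqref{eq:hvstar} is proved. The paper argues by \emph{induction on $n$}: it applies the Poisson formula \eqref{poiss} separately to both sides of \eqref{eq:hvstar}, and then matches the factors. The products over roots agree directly (via Remark~\ref{rem:4}), and the directional factors are compared using the inductive hypothesis for those $w\in\Z^{n}$ with $w_{n}=0$ and using Example~\ref{exm:5} for those with $w_{n}\neq 0$. This approach stays entirely within the algebraic machinery already established and never needs to revisit incidence varieties. You, by contrast, propose a direct geometric comparison of the two incidence pictures together with a multidegree count via Proposition~\ref{prop:6} and the mixed-volume identity of Lemma~\ref{lemm:10}. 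Your route is conceptually appealing and would also explain the exponent $d$ geometrically, but --- as you yourself flag --- the step identifying the two direct-image \emph{cycles} (not just their supports) is where the work lies: one must check that the specialization $\bfv_{i,a'}\mapsto\sum_{\varpi(a)=a'}u_{i,a}t_{n}^{a_{n}}$ keeps the eliminant irreducible (so that both sides are powers of a \emph{single} irreducible and the degree comparison pins down the exponent), and that no boundary contribution appears over $\{u_{0,e_{n}}=0\}$. These are provable, but filling them in amounts to redoing, in a different key, what the inductive argument gets for free from Theorem~\ref{thm:1}. In short: same plan, different engine for the key lemma; the paper's induction is more self-contained given the tools already built, while your approach would yield a more intrinsic proof once the cycle-level base change is justified.
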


Indeed, the exponent $d$ in \eqref{eq:79} can be made explicit in
terms of ``mixed integrals'' in the sense of \cite[Definition
1.1]{PS:rbke} or, equivalently, ``shadow mixed volumes'' as in
\cite[Definition 1.7]{Esterov:emfb}, see Remark \ref{rem:2} for
further details. 

In addition, we also obtain a product formula for the addition of supports
(Corollary~\ref{cor:3}) and we extend the height bound for the sparse
resultant in \cite[Theorem~1.1]{Som04} to arbitrary collections of
supports (Proposition \ref{prop:4}).

The exponent $d_{\bfcA}$ in \eqref{eq:13} can be expressed in
combinatorial terms (Proposition~\ref{prop:10}). Hence, all formulae
and properties for the sparse resultant can be restated for sparse
eliminants at the cost of paying attention to the relative position of
the supports with respect to the lattice generated by an essential
subfamily, see \S \ref{sec:basic-prop-sparse} for details.

Our approach is based on multiprojective elimination theory. Let
$Z_{\bfcA}$ be the multiprojective toric cycle associated to the
family $\bfcA,$ and denote by $|Z_{\bfcA}|$ its supporting
subvariety. In Proposition~\ref{prop:1} we show that
\begin{displaymath}
  \Elim_{\bfcA}=\pm \Elim_{\bfe_{0}, \dots, \bfe_{n}}(|Z_{\bfcA}|), \quad   \Res_{\bfcA}=\pm \Res_{\bfe_{0}, \dots, \bfe_{n}}(Z_{\bfcA}),
\end{displaymath}
were $\Elim_{\bfe_{0}, \dots, \bfe_{n}}$ and $\Res_{\bfe_{0}, \dots,
  \bfe_{n}}$ respectively denote the eliminant and the resultant
associated to the vectors $\bfe_{i}$, $i=0,\dots, n$, in the standard
basis of $\Z^{n+1}$, see \S \ref{sec:result-elim-mult} for
details. Both eliminants and resultants play an important role in this
theory, but it is well known that multiprojective resultants are {the} central
objects because they reflect better the geometric operations at an
algebraic level. 

Our proof of Theorem \ref{thm:3} is based on the standard properties
of multiprojective resultants and on tools from toric geometry,
together with the classical Bernstein's theorem and its refinement for
valued fields due to Smirnov \cite{Smi97:tsodvr}.

We remark that the formula in \cite[Theorem 1.1]{PS93} is stated for
general Laurent polynomials and that it amounts to an equality modulo
an unspecified scalar factor in $\Q^{\times}$.  In Theorem
\ref{thm:1}, we extend this product formula to an arbitrary family of
supports and we precise the value of this scalar factor up to a sign.
Theorem \ref{thm:3} follows from this result after showing that the
formula in Theorem~\ref{thm:1} can be evaluated into a particular
family of Laurent polynomials exactly when no directional sparse
resultant vanishes.

The paper is organized as follows: in \S \ref{sec:preliminaries} we
introduce some notation and show a number of preliminary results
concerning intersection theory on multiprojective spaces, toric
varieties and cycles, and root counting on algebraic tori.  In
\S \ref{sec:basic-prop-sparse} we show that the sparse eliminant and
the sparse resultant respectively coincide with the eliminant and the
resultant of a multiprojective toric cycle and, using this
interpretation, we derive some of their basic properties from the
corresponding ones for general eliminants and resultants.  In
\S \ref{sec:poiss-form-form} we prove the Poisson formula for the
sparse resultant and we derive  some of its consequences.  In
\S \ref{sec:example-comp-with} we give some more examples, compare our
results with previous ones, and establish sufficient
conditions for these previous results to hold.

\medskip
\noindent {\bf Acknowledgments.}
We thank Jos\'e Ignacio Burgos, Alicia Dickenstein, Alex
Esterov, Juan Carlos Naranjo and Bernd Sturmfels for
several useful discussions and suggestions.

\section{Preliminaries}\label{sec:preliminaries}

All along this text, bold symbols indicate finite sets or sequences of objects, where the
type and number should be clear from the context. For instance, $\bfx$
might denote the set of variables $\{x_{1},\dots, x_{n}\}$ so that,
if $K$ is a field, then $K[\bfx]=K[x_{1},\dots,x_{n}]$.  

We denote by $\N$ the set of nonnegative integers. Given a vector
$\bfb=(b_{1},\dots, b_{n})\in \N^{n}$, we set
$|\bfb|=\sum_{i=1}^{n}b_{i}$ for its length.

\subsection{Cycles on multiprojective
  spaces}\label{sec:cycl-mult-space}

In this subsection, we give the notation and basic facts on
intersection theory of multiprojective spaces. Most of the material is
taken from \cite[\S 1.1 and 1.2]{DKS2011:hvmsan}.

Let $K$ be a field and $\K$ an algebraically closed field containing
$K$. For instance, $K$ and $\K$ might be taken as $\Q$ and $\C$,
respectively. For $m\ge 0$ and $\bfn=(n_{0},\dots,n_{m})\in
\N^{m+1}$, we consider the multiprojective space over~$K$ given by
\begin{equation}\label{eq:21}
\P^{\bfn}_{K}=\P^{n_{0}}_{K}\times\dots\times\P^{n_{m}}_{K}.
\end{equation}
For $i=0,\dots, m$, let $\bfx_{i}=\{x_{i,0},\dots, x_{i,n_{i}}\}$ be a
set of $n_{i}+1$ variables and put $\bfx=\{\bfx_{0},\dots,
\bfx_{m}\}$. The multihomogeneous coordinate ring of $\P_{K}^{\bfn}$
is then given by $ K[\bfx]= K[\bfx_0, \dots, \bfx_m]$. It is
multigraded by declaring $\bfdeg(x_{i,j}) = \bfe_i\in \N^{m+1}$, the
$(i+1)$-th vector of the standard basis of $\R^{m+1}$.  For
$\bfd=(d_0,\dots,d_m)\in \N^{m+1}$, we denote by $ K[\bfx]_{\bfd}$ its
component of multidegree $\bfd$.

Set
\begin{equation}\label{eq:8}
\N^{n_{i}+1}_{d_{i}}=\{\bfa_{i}\in \N^{n_{i}+1}\mid
|\bfa_{i}|=d_{i}\}\quad \text{ and } \quad
\N^{\bfn+\bfone}_{\bfd}= \prod_{i=0}^{m} \N^{n_i+1}_{d_{i}}.
\end{equation}
With this notation, a multihomogeneous polynomial $f\in K[\bfx]_{\bfd}$ writes
down as
\begin{displaymath}
f =  \sum_{\bfa\in \N^{\bfn+\bfone}_{\bfd}} \alpha_{\bfa} \,
\bfx^\bfa
\end{displaymath}
where, for each index $\bfa\in \N^{\bfn+\bfone}_{\bfd}$,
$\alpha_{\bfa}$ denotes an element of $K$ and
$\bfx^\bfa=\prod_{i,j}x_{i,j}^{a_{i,j}}$.

A \emph{cycle} on $\P^{\bfn}_{K}$ is a  $\Z$-linear combination 
\begin{equation} \label{eq:51}
  X=\sum_{V}m_{V} V
\end{equation}
where the sum is over the irreducible subvarieties $V$ of
$\P^{\bfn}_{K}$ and $m_{V}=0$ for all but a finite number of $V$. The
subvarieties $V$ such that $m_{V}\ne0$ are called the
\emph{irreducible components} of $X$. The \emph{support} of $X$,
denoted by $|X|$, is the union of its irreducible components.
We also denote by $X_{\K }$ the cycle on $\P^{\bfn}_{\K }$ obtained
from $X$ by the base change $ K\hookrightarrow \K$, that is
\begin{displaymath} 
  X_{\K }=\sum_{V}m_{V} (V\times_{\Spec(K)} \Spec(\K )).
\end{displaymath}

A cycle is \emph{equidimensional} or of \emph{pure dimension} if all
its irreducible components are of the same dimension. For $r=0,\dots,
|\bfn|$, we denote by $Z_r(\P^\bfn_K)$ the group of cycles
on~$\P^{\bfn}$ of pure dimension~$r$.

Given a multihomogeneous ideal $I\subset K[\bfx]$, we denote by $V(I)$
the subvariety of $\P^{\bfn}_{K}$ defined by $I$. For each
minimal prime ideal $P$ of $I$, we denote by $m_{P}$ its
\emph{multiplicity}, defined as the length of the $K[\bfx]$-module
$(K[\bfx]/I)_{P}$. We then set
\begin{equation*}
Z(I)=\sum_{P}m_{P}V(P)  
\end{equation*}
for the cycle on~$\P_{K}^{\bfn}$ defined by $I$. 

We denote by $\Div(\P_{K}^{\bfn})$ the group of Cartier divisors on
$\P_{K}^{\bfn}$.  Given a multihomogeneous rational function
$f\in K(\bfx)^{\times}$, we denote by
\begin{equation*} 
\div(f)\in \Div(\P_{K}^{\bfn})  
\end{equation*} 
the associated Cartier divisor. Using \cite[Propositions II.6.2 and
II.6.11]{Hartshorne:ag} and the fact that the ring $K[\bfx]$ is
factorial, we can verify that that every Cartier divisor
on~$\P_{K}^{\bfn}$ is of this form.

Let $X$ be a cycle of pure dimension $r$ and $D\in
\Div(\P_{K}^{\bfn})$ a Cartier divisor intersecting $X$ properly.  We
denote by $X\cdot D$ the intersection product of $X$ and $D$, with
intersection multiplicities as in \cite[\S I.1.7,
page~53]{Hartshorne:ag}, see also \cite[Definition
1.3]{DKS2011:hvmsan}. It is a cycle of pure dimension $r-1$.

Let $X\in Z_{r}(\P^{\bfn}_{K})$ and $\bfb \in \N^{m}_{r}$ a vector of
length $r$. For $i=0,\dots, m$, we denote by
$H_{i,j}\in\Div\big(\P^{\bfn}_{\K }\big)$, $j=1,\dots, b_{i}$, the
inverse image under the projection $ \P^\bfn_{\K }\to \P^{n_i}_{\K}$
of a family of $b_{i}$ generic hyperplanes of $\P^{n_i}_{\K}$. The
\emph{degree} of $X$ of index $\bfb$, denoted by $\deg_{\bfb}(X)$, is
defined as the degree of the 0-dimensional cycle
$$
X_{\K }\cdot \prod_{i=0}^{m}\prod_{j=1}^{b_{i}} H_{i,j}.
$$

The Chow ring of $\P_{K}^{\bfn}$, denoted by $A^{*}(\P_{K}^{\bfn}),$ can be written down  as
\begin{equation}\label{eq:54}
  A^{*}(\P_{K}^{\bfn})=\Z[\theta_{0}, \dots,
  \theta_{m}]/(\theta_{0}^{n_{0}+1},\dots, \theta_{m}^{n_{m}+1})
\end{equation}
where $\theta_{i}$ denotes the class of the inverse image under the
projection $\P^{\bfn}_K\to \P^{n_{i}}_K$ of a hyperplane of
$\P^{n_{i}}_K$ \cite[Example~8.4.2]{Fulton:IT}.  

Given $X\in
Z_r(\P^\bfn_K)$, its class in the Chow ring is
\begin{equation} \label{eq:53}
[X]= \sum_{\bfb} \deg_{\bfb}(X)\, \theta_0^{n_0 -b_0}\cdots
 \theta_m^{n_m -b_m},
\end{equation}
the sum being over all $\bfb\in \N_r^{m+1}$ such that $b_{i}\le n_{i}$
for all $i$. It is a homogeneous element of $A^{*}(\P^{\bfn}_K)$ of
degree $|\bfn|-r$ containing the information of all the mixed
degrees of~$X$.
In the particular case when $X=Z(f)$ with $f\in K[\bfx]_{\bfd}$, we
have that
\begin{equation*} 
  [Z(f)]=\sum_{i=0}^{m}d_{i}\theta_{i}.
\end{equation*}

Let $X\in Z_{r}(\P_{K}^{\bfn})$ and $f\in K[\bfx]$ a multihomogeneous
polynomial such that $X$ and $\div(f)$ intersect properly.  The
multiprojective B\'ezout theorem says that
\begin{equation}  \label{eq:50}
  [X\cdot \div(f)]= [X]\cdot[Z(f)],
\end{equation}
see for instance \cite[Theorem~1.11]{DKS2011:hvmsan}.

\begin{definition}
  \label{def:4}
Let   $\varphi\colon \P^{\bfn_{1}}_{K}\to \P^{\bfn_{2}}_{K}$ be a
morphism and $V$ an
irreducible subvariety of $\P^{\bfn_{1}}_{K}$ of dimension $r$. The \emph{degree} of
$\varphi$ on $V$ is defined as
\begin{displaymath}
  \deg(\varphi|_{V})=\left\{
    \begin{array}{cl}
      [K(V):K({{\varphi(V)}})]& \text{ if }
      \dim({\varphi(V)})= r, \\[2mm]
      0 & \text{ if }
      \dim({\varphi(V)})< r.
    \end{array}\right.
\end{displaymath}
The \emph{direct image} under $\varphi$ of $V$ is defined as
$\varphi_{*}V= \deg(\varphi|_{V}) \,{\varphi(V)}$. It is a cycle of
dimension $r$. This notion extends by linearity to equidimensional
cycles and induces a linear map
\begin{displaymath}
  \varphi_{*}\colon Z_{r}(\P_{K}^{\bfn_{1}})\longrightarrow Z_{r}(\P_{K}^{\bfn_{2}}).
\end{displaymath}


Let $H$ be a hypersuface of $\P^{\bfn_{2}}_{K}$ not containing the
image of $\varphi$. The \emph{inverse image} of~$H$ under $\varphi$ is
defined as the hypersurface $\varphi^{*}H= {\varphi^{-1}(H)}$. This
notion extends by linearity to a $\Z$-linear map
\begin{displaymath}
\varphi^{*}:\Div(\P^{\bfn_{2}}_{K})\dashrightarrow \Div(\P^{\bfn_{1}}_{K}),
\end{displaymath}
well-defined for Cartier divisors whose support does not contain the
image of $\varphi$.
\end{definition}

Direct images of cycles, inverse images of Cartier divisors and
intersection products are related by the \emph{projection formula}
\cite[Chapter V, \S C.7, formula (11)]{Serre:alm}: let
$\varphi:\P^{\bfn_{1}}_{K}\to\P^{\bfn_{2}}_{K}$ be a morphism, $X$ an
equidimensional cycle on $\P^{\bfn_{1}}_{K}$ and $D$ a Cartier divisor
on $\P^{\bfn_{2}}_{K}$ intersecting $\varphi_{*}X$ properly. Then
\begin{equation}
  \label{eq:38}
\varphi_{*}X\cdot D =  \varphi_{*}(X\cdot \varphi^{*}D).
\end{equation}

\begin{lemma} \label{lemm:2}
Let $0\le q\le m$ and denote by
  \begin{math}
   \pr\colon \P_{K}^{\bfn}\rightarrow
  \prod_{i=0}^{q}\P_{K}^{n_{i}} 
\end{math} the projection onto the first $q+1$ factors of
$\P_{K}^{\bfn}$. Let $X\in Z_{r}(\P_{K})$ and $\bfb\in
\N^{q+1}_{r}$. Then
  \begin{displaymath}
    \deg_{\bfb}(\pr_{*}X)=     \deg_{\bfb,\bfzero}(X). 
  \end{displaymath}
\end{lemma}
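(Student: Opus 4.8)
I would prove Lemma~\ref{lemm:2} by reducing it to a statement about intersection numbers and applying the projection formula~\eqref{eq:38} repeatedly. Recall that $\deg_{\bfb,\bfzero}(X)$ is the degree of the $0$-dimensional cycle obtained by intersecting $X_{\K}$ with $b_{i}$ generic hyperplanes pulled back from $\P^{n_{i}}_{\K}$ for each $i=0,\dots,q$, and with \emph{no} hyperplanes from the factors $i=q+1,\dots,m$. Since the total number of hyperplanes is $|\bfb|=r=\dim X$, this intersection is indeed $0$-dimensional generically. On the other side, $\deg_{\bfb}(\pr_{*}X)$ is the degree of $(\pr_{*}X)_{\K}$ intersected with $b_{i}$ generic hyperplanes from $\P^{n_{i}}_{\K}$, $i=0,\dots,q$. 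The key point is that the hyperplanes on $\prod_{i=0}^{q}\P^{n_{i}}_{\K}$ pull back under $\pr$ precisely to the hyperplanes on $\P^{\bfn}_{\K}$ coming from the first $q+1$ factors.

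First I would reduce to the case $K=\K$ algebraically closed, since base change commutes with both direct image and the formation of degrees (this is essentially the definition of $\deg_{\bfb}$ via $X_{\K}$, and $\pr_{*}$ commutes with base change as $\pr$ is defined over $K$). Next I would observe that it suffices to treat the case where $X=V$ is irreducible, by linearity of both sides in $X$. Now there are two cases. If $\dim(\pr(V))<r$, then $\pr_{*}V=0$, so the left-hand side vanishes; I must then check the right-hand side vanishes too. In this case the generic fiber of $\pr|_{V}$ has positive dimension, so after intersecting $V$ with $b_{0}+\dots+b_{q}=r$ generic hyperplanes pulled back from the first $q+1$ factors — which is the same as intersecting with the preimage of $r$ generic hyperplanes on $\pr(V)\subseteq\prod_{i=0}^{q}\P^{n_{i}}$, a variety of dimension $<r$ — the resulting intersection is empty (the preimage of $r$ generic points in a variety of dimension $<r$ is empty). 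Hence $\deg_{\bfb,\bfzero}(V)=0$ as well. If instead $\dim(\pr(V))=r$, then $\pr_{*}V=\deg(\pr|_{V})\,\pr(V)$, and I intersect with the pullbacks $\pr^{*}H_{i,j}$ of generic hyperplanes $H_{i,j}$ on $\prod_{i=0}^{q}\P^{n_{i}}$; applying the projection formula~\eqref{eq:38} $r$ times gives
\begin{displaymath}
  \deg\bigl((\pr_{*}V)\cdot\textstyle\prod H_{i,j}\bigr)
  =\deg\bigl(\pr_{*}(V\cdot\textstyle\prod\pr^{*}H_{i,j})\bigr)
  =\deg\bigl(V\cdot\textstyle\prod\pr^{*}H_{i,j}\bigr),
\end{displaymath}
the last equality because pushing forward a $0$-cycle preserves its degree. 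Finally, the divisors $\pr^{*}H_{i,j}$ are exactly the pullbacks to $\P^{\bfn}_{\K}$ of generic hyperplanes on the factors $\P^{n_{i}}_{\K}$ for $i\le q$ (the two projections $\P^{\bfn}_{\K}\to\P^{n_{i}}_{\K}$ and $\prod_{i=0}^{q}\P^{n_{i}}_{\K}\to\P^{n_{i}}_{\K}$ are compatible via $\pr$), so the right-hand side above is by definition $\deg_{\bfb,\bfzero}(V)$.

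I expect the main technical point — though it is really routine — to be the careful bookkeeping that intersecting $X$ with $b_{i}$ pulled-back generic hyperplanes stays in proper intersection throughout, so that the iterated application of the projection formula is legitimate; this follows from the standard fact that a generic hyperplane section of an equidimensional cycle on a multiprojective space drops the dimension by one and meets it properly (as used implicitly in the definition of $\deg_{\bfb}$), together with moving lemma type genericity. Alternatively — and this may be cleaner — I could phrase the whole argument in the Chow ring: by~\eqref{eq:53} the class $[X]$ encodes all the $\deg_{\bfb}(X)$, and $\pr_{*}$ on Chow rings sends $\theta_{i}\mapsto\theta_{i}$ for $i\le q$ and $\theta_{i}\mapsto 0$ for $i>q$ (since $A^{*}(\prod_{i=0}^{q}\P^{n_{i}})=\Z[\theta_{0},\dots,\theta_{q}]/(\theta_{i}^{n_{i}+1})$ and the fibers of $\pr$ are the extra projective factors, whose classes push forward to zero by dimension reasons except in top degree which contributes $1$); reading off the coefficient of $\prod_{i=0}^{q}\theta_{i}^{n_{i}-b_{i}}$ on both sides then yields $\deg_{\bfb}(\pr_{*}X)=\deg_{\bfb,\bfzero}(X)$ directly.
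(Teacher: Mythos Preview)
Your proof is correct and uses essentially the same idea as the paper: both arguments rest on the projection formula~\eqref{eq:38} applied to generic hyperplane pullbacks, together with the fact that pushforward preserves the degree of a $0$-cycle. The only organizational difference is that the paper packages this as an induction on $r$ (peeling off one hyperplane at a time, which absorbs your case split $\dim(\pr(V))<r$ versus $=r$ into the base case), whereas you unroll the induction and handle the degenerate case explicitly; your Chow-ring alternative is also valid but is not the route the paper takes.
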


\begin{proof}
  We suppose without loss of generality that $K$ is algebraically
  closed. We proceed by induction on the dimension of $X$.  For $r=0$
  we have that $X=\sum_{\bfxi} m_{\bfxi}\bfxi$ with $\bfxi\in
  \P_{K}^{\bfn}$ and $m_{\bfxi}\in \Z$, and $\bfb=\bfzero\in
  \N^{q+1}$. Then $\pr_{*}X=\sum_{\bfxi} m_{\bfxi}\, \pr_{*}\bfxi$ and
  so
\begin{displaymath}
  \deg_{\bfzero}(\pr_{*}X)= \sum_{\bfxi}
  m_{\bfxi}= \deg_{\bfzero,\bfzero}(X).
\end{displaymath}

Now let $r\ge 1$. Choose $0\le i_{0}\le q$ such that $b_{i_{0}}\ge 1$
and let $H\in\Div\big( \prod_{i=0}^{q}\P_{K}^{n_{i}}\big) $ be the
inverse image of a generic hyperplane of $\P_{K}^{n_{i_{0}}}$ under
the projection of $\prod_{i=0}^{q}\P_{K}^{n_{i}}$ onto the $i_{0}$-th
factor. This Cartier divisor intersects $\pr_{*}X$ properly and,
by~\eqref{eq:38},
\begin{displaymath}
  \pr_{*}X \cdot H = \pr_{*}(X\cdot \pr^{*}H).
\end{displaymath}
Using this, together with the multiprojective B\'ezout theorem in
\eqref{eq:50} and the inductive hypothesis, we deduce that
\begin{multline*}
  \deg_{\bfb}(\pr_{*}(X))= \deg_{\bfb-\bfe_{i_{0}}}(\pr_{*}X
  \cdot H )= \deg_{\bfb-\bfe_{i_{0}}}(\pr_{*}(X\cdot \pr^{*}H))
  \\ = \deg_{\bfzero,\bfb-\bfe_{i_{0}}}(X\cdot \pr^{*}H)=
  \deg_{\bfzero, \bfb}(X),
\end{multline*}
which proves the statement. 
\end{proof}

We refer to \cite[\S1.2]{DKS2011:hvmsan} for other properties of mixed
degrees of cycles, including their behavior with respect to linear
projections, products and ruled joins.

\subsection{Eliminants and resultants of multiprojective
  cycles} \label{sec:result-elim-mult} In this subsection, we recall
the notions and basic properties of eliminants of varieties and
resultants of cycles following R\'emond \cite{Remond2001:em} and our
joint paper with Krick \cite{DKS2011:hvmsan}. We also give an
alternative definition of these objects with a more geometric flavor,
and show that both coincide
(Proposition~\ref{prop:2}).

Let $A$ be a factorial ring with field of fractions $K$. Let
$\bfn=(n_{0},\dots,n_{m})\in \N^{m+1}$ and let $\P_{K}^{\bfn}$ be the
corresponding multiprojective space as in~\eqref{eq:21}. Given $r\ge
0$ and a family of vectors $\bfd=(\bfd_0, \dots, \bfd_r )\in
(\N^{m+1}\setminus \{\bfzero\})^{r+1}$, we set
\begin{displaymath}
   N_{i}=\# \N_{d_{i}}^{n+1}- 1= \prod_{j=0}^{m}
  {d_{i,j}+n_{j}\choose n_{j}}-1, \quad i=0,\dots, r,
\end{displaymath} 
with $\N_{d_{i}}^{n+1}$ as in \eqref{eq:8}.  We will work in the
multiprojective space $\P^\bfN_{K}= \prod_{i=0}^{r}\P_{K}^{N_{i}}$
with $\bfN=(N_{0},\dots, N_{r})\in \N^{r+1}$.  For each $i$ we
consider a set of $N_{i}+1$ variables
$\bfu_{i}=\big\{u_{i,\bfa}\}_{\bfa\in \N^{\bfn+\bfone}_{\bfd}}$.  The
coordinates of $\P_{K}^{N_{i}}$ are indexed by the elements of
$\N^{n+1}_{d_{i}}$, and so $K[\bfu_{i}]$ is the homogeneous coordinate
ring of $\P_{K}^{N_{i}}$. Hence, if we set $\bfu=\{\bfu_{0},\dots,
\bfu_{r}\}$, then $K[\bfu]$ is the multihomogeneous coordinate ring of
$\P^\bfN_{K}$.

Consider the {general multihomogeneous polynomial} of multidegree $\bfd_i$ given by
\begin{equation} \label{eq:12}
F_i = \sum_{\bfa\in \N^{\bfn+\bfone}_{\bfd_{i}}} u_{i,\bfa} \,
\bfx^\bfa \in K[\bfu_{i}][\bfx],
\end{equation}
and denote by $\div(F_{i})$ the Cartier divisor on
$\P^{\bfn}_{K}\times\P_{K}^{\bfN}$ it defines.  Given $X\in
Z_{r}(\P^{\bfn}_{K})$, the family of Cartier divisors $\div(F_{i})$,
$i=0,\dots, r$, intersects $X \times \P_{K}^{\bfN}$ properly. We then
set
\begin{equation}\label{eq:30}
  \Omega_{X,\bfd}=(X\times \P_{K}^{\bfN}) \cdot \prod_{i=0}^{r}\div(F_{i}),
\end{equation}
which is a cycle on $\P^{\bfn}_{K}\times \P^{\bfN}_{K}$ of pure
codimension $|\bfn|+1$.  When $X=V$ is an irreducible subvariety, it
coincides with the incidence variety of $V$ and~$F_{i}$'s. Consider
also the morphism given by the projection onto the second factor
\begin{equation}\label{eq:90}
\rho\colon \P^{\bfn}_{K}\times \P^{\bfN}_{K} \longrightarrow
\P^{\bfN}_{K}  .
\end{equation}

\begin{definition} \label{def:3} Let $V\subset\P_{K}^{\bfn}$ be an
  irreducible subvariety of dimension $r$ and $\bfd\in
  (\N^{m+1}\setminus \{\bfzero\})^{r+1}$.  The \emph{eliminant} of $V$
  of index $\bfd$, denoted by $\Elim_{\bfd}(V)$, is defined as any
  irreducible polynomial in $A[\bfu]$ giving an equation for the image
  $\rho(\Omega_{V,\bfd})$ if it is a hypersurface, and as 1 otherwise.
\end{definition}

\begin{definition} \label{def:6} Let $V\subset\P_{K}^{\bfn}$ be an
  irreducible subvariety of dimension $r$ and $\bfd\in
  (\N^{m+1}\setminus \{\bfzero\})^{r+1}$. The \emph{resultant} of $V$ of
  index $\bfd$, denoted by 
  $\Res_{\bfd}(X)$, is defined as any primitive polynomial in $A[\bfu]$
  giving an equation for the direct image $\rho_{*}\Omega_{V,\bfd}$.

  More generally, let $X\in Z_{r}(\P_{K}^{\bfn})$ and write $X=\sum_{V}m_{V}V$ as in
  \eqref{eq:51}. Then, the \emph{resultant} of $X$ of index $\bfd$ is
  defined as
  \begin{displaymath}
    \Res_{\bfd}(X)=\prod_{V}\Res_{\bfd}(V)^{m_{V}}.
  \end{displaymath}
\end{definition}

Both eliminants and resultants are well-defined up to an scalar factor in $A^{\times}$,
the group of units of $A$.

The eliminant $\Elim_{\bfd}(V)$ can be alternatively defined as an
irreducible equation for the support of the direct image
$\rho_{*}\Omega_{V,\bfd}$. Hence
\begin{displaymath}
  \Res_{\bfd}(V)= \lambda \Elim_{\bfd}(V)^{\deg(\rho|_{\Omega_{V,\bfd}})}
\end{displaymath}
with $\lambda\in A^{\times}$. The exponent $
\deg(\rho|_{\Omega_{V,\bfd}})$ is not necessarily equal to 1 and so
eliminants and resultants do not necessarily coincide, see for
instance \cite[Example~1.31]{DKS2011:hvmsan}.

The definitions of these objects in
\cite{Remond2001:em, DKS2011:hvmsan} are given in more algebraic
terms. We now show that our present definitions coincide with theirs.

\begin{proposition} \label{prop:2} The notions of eliminants and
  resultants in Definitions \ref{def:3} and \ref{def:6} respectively
  coincide, up to a scalar factor in $A^{\times}$, with those in
  \cite[Definitions~1.25 and 1.26]{DKS2011:hvmsan}.
\end{proposition}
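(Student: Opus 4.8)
The plan is to compare the geometric definitions of eliminant and resultant given here (Definitions \ref{def:3} and \ref{def:6}) with the algebraic ones in \cite[Definitions~1.25 and 1.26]{DKS2011:hvmsan}, and to show they agree up to a unit of $A$. The natural strategy is to reduce everything to the irreducible case, since both the present resultant and the one in \cite{DKS2011:hvmsan} are defined on cycles by multiplicativity over irreducible components with the same multiplicities; so it suffices to treat an irreducible $V\subset \P^{\bfn}_{K}$ of dimension $r$ and a fixed index $\bfd$. For an irreducible $V$, the present eliminant is an irreducible equation for $\rho(\Omega_{V,\bfd})$ (equivalently for $|\rho_{*}\Omega_{V,\bfd}|$) and the present resultant is a primitive equation for the cycle $\rho_{*}\Omega_{V,\bfd}$; the relation $\Res_{\bfd}(V)=\lambda\,\Elim_{\bfd}(V)^{\deg(\rho|_{\Omega_{V,\bfd}})}$ recorded just above already ties the two present notions together. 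So the crux is to match one of them — most cleanly the eliminant — with its counterpart in \cite{DKS2011:hvmsan}, and then to check that the exponent $\deg(\rho|_{\Omega_{V,\bfd}})$ coincides with the analogous multiplicity appearing in the algebraic definition of the resultant there.

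Concretely, I would first recall the construction in \cite{DKS2011:hvmsan}: there one takes the multihomogeneous ideal $I_{V}$ of $V$, extends scalars to $K[\bfu]$, adds the general forms $F_{0},\dots,F_{r}$ from \eqref{eq:12}, and considers the elimination ideal obtained by saturating with respect to the irrelevant ideal of $\P^{\bfn}$ and intersecting with $A[\bfu]$ — the eliminant is a generator of (the radical of) the codimension-one part, and the resultant records it with the length multiplicity of the corresponding primary component. The key point is that the variety defined in $\P^{\bfn}_{K}\times\P^{\bfN}_{K}$ by $I_{V}+(F_{0},\dots,F_{r})$ is exactly $|\Omega_{V,\bfd}|$, because the $\div(F_{i})$ meet $V\times\P^{\bfN}$ properly (as noted after \eqref{eq:30}), so that the scheme-theoretic intersection is equidimensional of the expected codimension and has $|\Omega_{V,\bfd}|$ as its underlying set with the same intersection multiplicities as in \eqref{eq:30}. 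Then the elimination ideal computed in \cite{DKS2011:hvmsan} cuts out precisely the Zariski closure $\overline{\rho(|\Omega_{V,\bfd}|)}$ in $\P^{\bfN}_{K}$, and a polynomial in $A[\bfu]$ is an irreducible equation for this closure (when it is a hypersurface) if and only if it generates the height-one part of that elimination ideal up to a unit — this is the standard correspondence between hypersurfaces in $\P^{\bfN}_{K}$ and principal multihomogeneous primes in $A[\bfu]$, using that $A[\bfu]$ is factorial. This identifies $\Elim_{\bfd}(V)$ with the eliminant of \cite{DKS2011:hvmsan} up to $A^{\times}$.

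For the resultant, the remaining task is to identify the exponent. On the present side the exponent is $\deg(\rho|_{\Omega_{V,\bfd}})=[K(\Omega_{V,\bfd}):K(\rho(\Omega_{V,\bfd}))]$; on the algebraic side it is the length of the primary component of the elimination ideal at the generic point of the hypersurface $\overline{\rho(|\Omega_{V,\bfd}|)}$. These agree because $\rho_{*}$ of the cycle $\Omega_{V,\bfd}$ (with the multiplicities from the intersection product) assigns to the hypersurface exactly this generic degree, and the algebraic elimination procedure in \cite{DKS2011:hvmsan} is set up so that the multiplicity it records is precisely the multiplicity of that hypersurface in $\rho_{*}\Omega_{V,\bfd}$ — this compatibility of direct image of cycles with elimination is one of the foundational facts established in \cite[\S1.2]{DKS2011:hvmsan}, and I would invoke it. Passing back from the irreducible case to arbitrary cycles is then immediate from the multiplicativity in Definition~\ref{def:6} and the matching multiplicativity of the algebraic resultant in \cite{DKS2011:hvmsan}. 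The main obstacle I anticipate is purely bookkeeping: aligning conventions — saturation versus direct image, the precise indexing of the $\P^{N_i}$ by $\N^{n+1}_{d_i}$, and the normalization (primitive versus unit scalar) — so that the two multiplicities provably coincide rather than merely morally coincide; once the scheme-theoretic intersection $V(I_{V}+(F_{i}))$ is identified with $\Omega_{V,\bfd}$ as a cycle, everything else follows from the cited properties of mixed degrees and direct images.
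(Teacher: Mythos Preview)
Your treatment of the eliminant is correct and essentially matches the paper: both arguments reduce to showing that the geometric and algebraic constructions cut out the same hypersurface and then use irreducibility. The paper cites \cite[Proposition~1.37(2) and Lemma~1.34]{DKS2011:hvmsan} for the zero-set equality rather than rederiving it from the elimination ideal description, but the content is the same.

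For the resultant, your route diverges from the paper's, and here there is a gap. You propose to identify the exponent $\deg(\rho|_{\Omega_{V,\bfd}})$ directly with the length multiplicity appearing in the algebraic definition, appealing to a ``compatibility of direct image of cycles with elimination'' that you say is established in \cite[\S1.2]{DKS2011:hvmsan}. That section develops mixed degrees and intersection products, but it does not prove that the algebraically defined resultant of \cite[Definition~1.26]{DKS2011:hvmsan} is an equation for $\rho_{*}\Omega_{V,\bfd}$ --- that identification is precisely the content of the present proposition, so invoking it here is circular. To carry out your strategy honestly you would have to compute the length of the primary component of the elimination ideal at the generic point of the hypersurface and match it with the generic fibre degree of $\rho$; this is doable but is real work that you have not done.

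The paper sidesteps this multiplicity comparison entirely. Once both resultants are known to be powers of the same irreducible polynomial, it suffices to check that their partial degrees $\deg_{\bfu_i}$ agree. The paper computes $\deg_{\bfu_i}(\Res_{\bfd}(V))$ via Lemma~\ref{lemm:2} and a Chow-ring calculation using the multiprojective B\'ezout theorem~\eqref{eq:50}, arriving at $\coeff_{\bftheta^{\bfn}}\big([V]\cdot\prod_{\ell\ne i}\sum_{j}d_{\ell,j}\theta_{j}\big)$, which is exactly the degree formula for the algebraic resultant provided by \cite[Proposition~1.32]{DKS2011:hvmsan}. This degree-matching argument is short and avoids any delicate comparison of scheme-theoretic lengths. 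Your approach, if the multiplicity identification were actually carried out, would give a more conceptual proof; as written, the key step is only asserted.
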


\begin{proof} Let notation be as Definitions \ref{def:3} and
  \ref{def:6}, and denote temporarily by $\wt \Elim_{\bfd}(V)$ and $\wt
  \Res_{\bfd}(V)$ the eliminant and the resultant from
  \cite{DKS2011:hvmsan}.  By Proposition~1.37(2) and
  Lemma 1.34 in {\it loc. cit.}, all four zero sets of $\Elim_{\bfd}(V)$,
  $\Res_{\bfd}(V)$, $\wt \Elim_{\bfd}(V)$ and $\wt \Res_{\bfd}(V)$
  coincide. By construction, both $\Elim_{\bfd}(V)$ and $\wt
  \Elim_{\bfd}(V)$ are irreducible and so they coincide up to a scalar
  factor in $A^{\times}$, proving the statement for the eliminants.

  Both resultants are powers of the same irreducible
  polynomial. Hence, to prove the rest of the statement it is enough
  to show that their mixed degrees coincide.

  Let $0\le i\le r$. By \cite[Propositions 1.10(4)]{DKS2011:hvmsan}
  and Lemma \ref{lemm:2},
\begin{equation}\label{eq:20}
 \deg_{\bfu_{i}}(\Res_{\bfd}(V))=
  \deg_{\bfN-\bfe_{i}}(\rho_{*}\Omega_{V,\bfd}) =   \deg_{\bfzero,\bfN-\bfe_{i}}(\Omega_{V,\bfd}),
\end{equation}
where $\bfe_{i}$ denotes the $(i+1)$-th vector in the standard basis
of $\Z^{r+1}$. 

Let $\theta_{i}$, $i=0,\dots,m$, and $\zeta_{j}$, $j=0,\dots, r$,
respectively denote the variables in the Chow rings
$A^{*}(\P^{\bfn}_{K})$ and $A^{*}(\P_{K}^{\bfN})$ as in \eqref{eq:54}.
Let $[\Omega_{V,\bfd}]$ denote the class of the incidence variety
in the Chow ring $A^{*}(\P^{\bfn}_{K} \times \P^{\bfN}_{K})\simeq
A^{*}(\P^{\bfn}_{K}) \otimes A^{*}(\P^{\bfN}_{K})$. By \eqref{eq:53},
\begin{equation}\label{eq:22}
\deg_{\bfzero,\bfN-\bfe_{i}}(\Omega_{V,\bfd}) = \coeff_{\bftheta^{\bfn} \zeta_{i}} ([\Omega_{V,\bfd}]). 
\end{equation}
By the
multiprojective B\'ezout theorem in~\eqref{eq:50}, 
\begin{math}
  [\Omega_{V,\bfd}]= [V\times\P^{\bfN}] \cdot 
\prod_{i=0}^{n} [Z(F_{i})],
\end{math}
where $F_{i}$ is the general polynomial as in \eqref{eq:12}. 
By \cite[Propositions 1.19(2) and 1.10(2,4)]{DKS2011:hvmsan}, the
classes in $ A^{*}(\P^{\bfn}_{K})\otimes
  A^{*}(\P^{\bfN}_{K})$ of $V\times \P_{K}^{\bfN}$ and $Z(F_{i})$ are given by 
\begin{displaymath}
  [V\times\P^{\bfN}]= [V]\otimes 1 \quad \text{ and } \quad  [Z(F_{i})]=\zeta_{i}
  +\sum_{j=0}^{m}d_{i,j} 
\theta_{j},
\end{displaymath}
where $[V]$ denotes the class of $V$ in $A^{*}(\P^{\bfn}_{K})$. Hence,
\begin{multline} \label{eq:55} \coeff_{\bftheta^{\bfn} \zeta_{i}}
  ([\Omega_{V,d}])= \coeff_{\bftheta^{\bfn}\zeta_{i}} \Big((
  [V]\otimes 1 ) \cdot \prod_{i=0}^{n} \Big(\zeta_{i}
  +\sum_{j=0}^{m}d_{i,j} \theta_{j}\Big)\Big) \\ =
  \coeff_{\bftheta^{\bfn}} \Big([V] \cdot \prod_{\ell\ne i}^{n}
  \sum_{j=0}^{m}d_{\ell,j} \theta_{j}\Big).
\end{multline}
Then, \eqref{eq:20}, \eqref{eq:22} and \eqref{eq:55} together with Proposition 1.32
in {\it loc. cit.}, imply that
\begin{displaymath}
 \deg_{\bfu_{i}}(\Res_{\bfd}(V))=
\deg_{\bfu_{i}}(\wt \Res_{\bfd}(V)).
\end{displaymath}
Hence, both resultants coincide up to a scalar factor in $A^{\times}$.
The general case when $X$ is a cycle of pure dimension $r$ follows by
linearity.
\end{proof}

Let $V\subset\P_{K}^{\bfn}$ be an irreducible subvariety of dimension
$r$ and $\bfd\in (\N^{m+1}\setminus \{\bfzero\})^{r+1}$.  Each set of
variables $\bfu_{i}$ corresponds to the coefficients of a
multihomogeneous polynomial of degree $\bfd_{i}$.  Hence, given
$f_{i}\in \K [\bfx]_{\bfd_{i}}$, $i=0,\dots, r$, we can write
\begin{displaymath}
\Elim_{\bfd}(V)(f_{0},\dots, f_{r}) \quad \text{ and } \quad \Res_{\bfd}(X)(f_{0},\dots, f_{r})
\end{displaymath}
for the evaluation of the eliminant and of the resultant at the
coefficients of the $f_{i}$'s, respectively.

Eliminants and resultants are
polynomials whose vanishing at a given family of multihomogeneous
polynomials corresponds to the condition that this family has a common
root on $V$: if $\rho(\Omega_{V,\bfd})$ is a
hypersurface, then
\begin{equation}\label{eq:56}
\Res_{\bfd}(V)(f_{0},\dots, f_{r})=0 \Longleftrightarrow  V\cap
V(f_{0},\dots, f_{r})\ne \emptyset,  
\end{equation}
and a similar statement holds for the eliminant. 

A central property of resultants is that they translate intersection
of cycles and Cartier divisors into evaluation. In precise terms, let
$X\in Z_{r}(\P^\bfn_K)$ be a cycle of pure dimension $r$, $ \bfd=(\bfd_{0},\dots,
\bfd_{r})\in (\N^{m+1}\setminus \{\bfzero\})^{r+1}$, and $f\in
K[\bfx]_{\bfd_{r}}$ such that $\div(f)$ intersects $X$ properly. Then
\begin{equation*}
\Res_{\bfd_0,\bfd_1\dots,\bfd_r}(X)(\bfu_0,\dots,\bfu_{r-1},f)
=\lambda \, \Res_{\bfd_0,\dots,\bfd_{r-1}}(X\cdot
\div(f))(\bfu_0,\dots,\bfu_{r-1}),  
\end{equation*}
with $\lambda \in K^\times$, see \cite[Proposition 3.6]{Remond2001:em} or
\cite[Proposition 1.40]{DKS2011:hvmsan}. 

Resultants also behave well with respect to other geometric
constructions including linear projections, products and ruled joins.
Both eliminants and resultants are invariant under  index permutations
and field extensions. The partial degrees of a resultant are
given by the mixed degrees of the underlying cycle, a fact already
exploited in the proof of Proposition \ref{prop:2}. The statements of
these properties and their proofs can be found in \cite{Remond2001:em,
  DKS2011:hvmsan}.

\subsection{Multiprojective toric varieties and
  cycles}\label{sec:toric-vari-cycl}
In this subsection, we set the standard notation for multiprojective
toric varieties and cycles, and prove some preliminary results, most
notably a formula for the intersection of a multiprojective toric
cycle and a toric Cartier divisor (Proposition \ref{prop:11}). We
assume a basic knowledge of the theory of normal toric varieties as
explained in \cite{Fulton:itv, CLS11}.

Let $n\ge 0$ and $M\simeq \Z^{n}$ a lattice of rank $n,$ and
set $N=M^{\vee}= \Hom(M,\Z)$ for its dual lattice.  Set also $M_{\R}=M\otimes\R$
and $N_{\R}=N\otimes\R$. The pairing between $x\in M_{\R}$ and $u\in
N_{\R}$ is denoted by $\langle x,u\rangle$. 

For a field $K$, we set
\begin{equation}\label{eq:94}
\T_{M,K}=\Spec(K[M])  
\end{equation}
for the \emph{(algebraic) torus} over $K$ corresponding to $M$. For
simplicity, we will focus on the case $K=\K$ is algebraically closed,
although all notions and results in this subsection are valid, with
suitable modifications, over an arbitrary field.  In our situation, we
write $\T_{M}=\T_{M,\K}$ for short. Since $\K$ is algebraically
closed, we can identify this torus with its set of points. With this
identification,
\begin{displaymath}
  \T_{M}=\Hom(M,\K^{\times})= N \otimes \K^{\times}\simeq
(\K^{\times})^{n}
\end{displaymath}
For $a\in M$, we denote by $\chi^{a} \colon \T_{M}\to \K^{\times}$ the
corresponding group homomorphism or \emph{character} of $\T_{M}$.

For $m\ge 0,$ consider a family of nonempty finite subsets
$\cA_{i}=\{a_{i,0},\dots, a_{i,c_{i}}\}\subset M$, $i=0,\dots, m$, and
set $ \bfcA=(\cA_{0}, \dots, \cA_{m})$.  Set  $\bfc=(c_{0},\dots,
c_{m})$ and consider the associated multiprojective space over $\K$
\begin{displaymath}
\P^{\bfc}=  \P^{\bfc}_{\K}=\prod_{i=0}^{m}\P^{c_{i}}_{ \K}.
\end{displaymath}
For each $i$, we denote by $\bfx_{i}=\{x_{i,0},\dots, x_{i,c_{i}}\}$ a
set of $c_{i}+1$ variables and we put $\bfx=\{\bfx_{1},\dots,
\bfx_{m}\}$, so that $\K[\bfx]=\K[\bfx_{1},\dots, \bfx_{m}]$ is the
multihomogeneous coordinate ring of $\P^{\bfc}$.

Let  $\varphi_{\bfcA}\colon \T_{M}\rightarrow
\P^{\bfc}$ be the monomial map given, for $\xi\in \T_{M}$, by
\begin{equation}\label{eq:23}
\varphi_{\bfcA}(\xi)= ((\chi^{a_{0,0}}(\xi):\dots:\chi^{a_{0,c_{0}}}(\xi)),
\dots, (\chi^{a_{m,0}}(\xi):\dots:\chi^{a_{m,c_{m}}}(\xi))).
\end{equation}
We then set
\begin{equation}
  \label{eq:27}
X_{\bfcA}= \ov{\varphi_{\bfcA}(\T_{M})}, \quad 
Z_{\bfcA}=(\varphi_{\bfcA})_{*}\T_{M}   
\end{equation}
for the associated multiprojective toric subvariety and toric cycle, respectively. 

For $i=0,\dots, m$, consider the sublattice of $M$ given by
\begin{equation}
  \label{eq:36}
L_{\cA_{i}}=\sum_{j=1}^{c_{i}}(a_{i,j}-a_{i,0})\Z,  
\end{equation}
and put $L_{\bfcA}=\sum_{i=0}^{m} L_{\cA_{i}}$.  By \cite[Proposition
1.1.8]{CLS11}, it follows that
\begin{equation}
  \label{eq:39}
  \dim(X_{\bfcA})=\rank(L_{\bfcA}).
\end{equation}
In particular, $X_{\bfcA}$ coincides with the support of $Z_{\bfcA}$
if and only if $\rank(L_{\bfcA})=n$. Otherwise, $\dim(X_{\bfcA}) \le n-1$
and $Z_{\bfcA}=0$.

For $i=0,\dots, m$, consider the convex hull
\begin{equation*}
\Delta_{i}=\conv(\cA_{i})\subset M_{\R}  .
\end{equation*}
It is a lattice polytope lying in a translate of the linear space
$L_{\cA_{i},\R}=L_{\cA_{i}}\otimes \R$.  We also set
\begin{math}
 \Delta=\sum_{i=0}^{m}\Delta_{i}   
\end{math}
for its Minkowski sum, which is a lattice polytope lying in a translate
of~$L_{\bfcA,\R}=L_{\bfcA}\otimes \R$.  We denote by $\Sigma_{\Delta}$ the conic
polyhedral complex on $N_{\R}$ given by the inner directions of
$\Delta$ as in \cite[page 26]{Fulton:itv} or \cite[Proposition
6.2.3]{CLS11}. If $\dim(\Delta)=n$, then $\Sigma_{\Delta}$ is a fan.

The multiprojective toric variety $X_{\bfcA}$ is not necessarily
normal. The next lemma shows that we can construct a proper normal
toric variety dominating it by considering any fan refining
$\Sigma_{\Delta}$. As it is customary, we denote by $X_{\Sigma}$ the
normal toric variety over $\K$ corresponding to a fan $\Sigma$ on
$N_{\R}$.

\begin{lemma} \label{lemm:11} Let $\Sigma$ be a fan in $N_{\R}$
  refining $\Sigma_{\Delta}$. The  map $\varphi_{\bfcA}$
  in~\eqref{eq:23} extends to a morphism of proper toric varieties
\begin{equation}\label{eq:15}
  \Phi_{\bfcA}\colon X_{\Sigma}\longrightarrow \P^{\bfc}.
\end{equation} 
In particular, $X_{\bfcA}=\Phi_{\bfcA}(X_{\Sigma})$ and
$Z_{\bfcA}=(\Phi_{\bfcA})_{*}X_{\Sigma} $.
\end{lemma}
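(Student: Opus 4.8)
The plan is to construct the extension $\Phi_{\bfcA}$ factor by factor, using the standard correspondence between lattice polytopes and semiample toric line bundles, and then to patch the factors together. First I would recall that for each $i=0,\dots,m$ the finite set $\cA_{i}\subset M$ determines a monomial map $\varphi_{\cA_{i}}\colon\T_{M}\to\P^{c_{i}}$ whose closure is the (possibly non-normal) projective toric variety associated to $\Delta_{i}=\conv(\cA_{i})$. The key point is that the normal fan of $\Delta_{i}$, namely $\Sigma_{\Delta_{i}}$, is coarsened by $\Sigma_{\Delta}$ (because $\Delta=\sum_{j}\Delta_{j}$ and the normal fan of a Minkowski sum is the common refinement of the normal fans of the summands, see \cite[Proposition 6.2.3]{CLS11}), and hence any fan $\Sigma$ refining $\Sigma_{\Delta}$ also refines $\Sigma_{\Delta_{i}}$. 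By the toric dictionary, the identity on $\T_{M}$ therefore extends to a proper toric morphism $X_{\Sigma}\to X_{\Sigma_{\Delta_{i}}}$; composing with the canonical (finite, birational onto its image) map $X_{\Sigma_{\Delta_{i}}}\to\P^{c_{i}}$ induced by the sections $\chi^{a_{i,0}},\dots,\chi^{a_{i,c_{i}}}$ gives a toric morphism $\Phi_{\cA_{i}}\colon X_{\Sigma}\to\P^{c_{i}}$ extending $\varphi_{\cA_{i}}$. Concretely, this amounts to checking that the pullback of $\cO_{\P^{c_{i}}}(1)$ is the globally generated (semiample) line bundle on $X_{\Sigma}$ attached to the divisor with support function $a\mapsto h_{\Delta_{i}}(a)=\min_{a'\in\cA_{i}}\langle a',a\rangle$; since $\Sigma$ refines $\Sigma_{\Delta_{i}}$, this support function is piecewise linear on $\Sigma$, so the line bundle exists and is generated by the sections $\chi^{a_{i,j}}$, which is exactly the condition for the base-point-free linear system to define the morphism to $\P^{c_{i}}$ \cite[\S3.4, \S6.1]{CLS11}.

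Next I would assemble the product map $\Phi_{\bfcA}=(\Phi_{\cA_{0}},\dots,\Phi_{\cA_{m}})\colon X_{\Sigma}\to\prod_{i=0}^{m}\P^{c_{i}}=\P^{\bfc}$. This is a morphism of varieties, it is proper since $X_{\Sigma}$ is proper (a morphism from a proper variety to any separated variety is proper), and on the dense torus $\T_{M}\subset X_{\Sigma}$ it restricts to $(\varphi_{\cA_{0}},\dots,\varphi_{\cA_{m}})=\varphi_{\bfcA}$ by construction, so $\Phi_{\bfcA}$ genuinely extends $\varphi_{\bfcA}$. For the final two assertions: $\Phi_{\bfcA}(X_{\Sigma})$ is closed (properness) and contains the dense subset $\varphi_{\bfcA}(\T_{M})$, hence equals its closure $X_{\bfcA}$ by \eqref{eq:27}; and $(\Phi_{\bfcA})_{*}X_{\Sigma}=Z_{\bfcA}$ follows from the functoriality/compatibility of direct images of cycles with open restriction together with the equality $\deg(\Phi_{\bfcA}|_{X_{\Sigma}})=\deg(\varphi_{\bfcA}|_{\T_{M}})$, both being the degree of the induced extension of function fields $\K(X_{\bfcA})\hookrightarrow\K(X_{\Sigma})=\K(M)$, which is exactly the definition of $(\varphi_{\bfcA})_{*}\T_{M}$ in \eqref{eq:27}.

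The main obstacle, and the only step requiring genuine care rather than bookkeeping, is the first one: verifying that a fan refining $\Sigma_{\Delta}$ produces a \emph{morphism} to $\P^{\bfc}$ and not merely a rational map. The subtlety is that the sets $\cA_{i}$ need not affinely span $M$ (their differences generate only the sublattice $L_{\cA_{i}}$ of \eqref{eq:36}), so $\Delta_{i}$ lives in a proper affine subspace and $\Sigma_{\Delta_{i}}$ is a coarsening of $\Sigma_{\Delta}$ which is itself possibly not complete in the relevant lower-dimensional sense; one must be careful that the normal-fan / Minkowski-sum statement is applied in the correct lattice and that the support function $h_{\Delta_{i}}$ is still piecewise-linear with respect to $\Sigma$. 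I would handle this by reducing to $L_{\cA_{i}}$: work with the quotient torus $\T_{M/(\text{something})}$ or, more simply, observe that $h_{\Delta_{i}}$ is the support function of the polytope $\Delta_{i}$ and that the normal fan of $\Delta$ refines that of each summand in the lattice $N$ regardless of the dimensions of the summands (the common-refinement property of normal fans under Minkowski sum holds for arbitrary lattice polytopes, not only full-dimensional ones), so $h_{\Delta_{i}}$ is linear on each cone of $\Sigma$ and the sections $\chi^{a_{i,j}}$ are simultaneously regular and without common zero on each affine chart $U_{\sigma}$ of $X_{\Sigma}$ — which is precisely the base-point-freeness needed for the morphism. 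Once this is in place, the rest is the formal toric dictionary.
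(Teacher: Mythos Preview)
Your proposal is correct, but it takes a different route from the paper's proof. The paper treats $\P^{\bfc}$ as a single toric variety with fan $\Sigma^{\bfc}=\prod_{i}\Sigma^{c_{i}}$ (each $\Sigma^{c_{i}}$ the normal fan of the standard simplex in $\R^{c_{i}}$), writes down the linear map $A\colon N\to\Z^{|\bfc|}$ dual to the torus homomorphism $\varphi_{\bfcA}$, checks in one stroke that $A^{-1}(\Sigma^{\bfc})=\Sigma_{\Delta}$, and then invokes the fan-compatibility criterion \cite[Theorem~3.3.4]{CLS11} to extend $\varphi_{\bfcA}$. By contrast, you argue factor by factor through the semiample line bundles associated to the support functions $h_{\Delta_{i}}$, checking base-point-freeness of the linear system spanned by the $\chi^{a_{i,j}}$ on each affine chart. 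The paper's approach is shorter and cleanly sidesteps the issue you flag about lower-dimensional $\Delta_{i}$'s: since the preimage fan $A^{-1}(\Sigma^{\bfc})$ is computed directly in $N_{\R}$, one never needs to form $X_{\Sigma_{\Delta_{i}}}$ or worry about whether $\Sigma_{\Delta_{i}}$ is a genuine fan. Your approach, on the other hand, makes explicit the line-bundle picture that is used later in the paper (e.g.\ in Proposition~\ref{prop:11}), and your resolution of the dimension subtlety---observing that the vertices of $\Delta_{i}$ lie in $\cA_{i}$, so the distinguished sections already have no common zero on each $U_{\sigma}$---is correct and self-contained. Both arguments reach the last two assertions in the same way, via properness and the equality of generic degrees.
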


\begin{proof}
  Let $\Sigma^{c_{i}}$ be the normal fan of the standard simplex
  of~$\R^{c_{i}}$, $i=0,\dots, m$, and set
  $\Sigma^{\bfc}=\prod_{i=0}^{r}\Sigma^{c_{i}}$, which is a fan on
  $\R^{\bfc}$. For each $i$, the toric variety associated to
  $\Sigma_{i}$ is $\P^{c_{i}}$ and so, by \cite[Proposition
  3.1.14]{CLS11}, the toric variety associated to $\Sigma^{\bfc}$ is
  the multiprojective space $\P^{\bfc}$.

The map
\begin{displaymath}
  (\K^{\times})^{|\bfc|}\longrightarrow \P^{\bfc},\quad
  (\bfz_{0},\dots,\bfz_{m})\longmapsto   ((1:\bfz_{0}),\dots,(1:\bfz_{m}))
\end{displaymath}
gives an isomorphism between the torus $(\K^{\times})^{|\bfc|}$ and
the open orbit $\P^{\bfc}_{0}$ of $\P^{\bfn}$. The image of
$\varphi_{\bfcA}$ is contained in this orbit and the map
$\varphi_{\bfcA}\colon \T_{M}\rightarrow \P^{\bfc}_{0}$ is a
homomorphism of tori. Under the correspondence in \cite[Theorem
3.3.4]{CLS11}, this homomorphism corresponds to the linear map
$A\colon N\to \Z^{\bfc}$ given, for $u\in N$, by
\begin{equation}\label{eq:58}
  A(u)= (\langle a_{i,j}-a_{i,0},u\rangle)_{0\le i\le m,1\le j\le c_{i}}.
\end{equation}

We have that $A^{-1}(\Sigma^{\bfc})=\Sigma_{\Delta}$. Since $\Sigma$
refines $\Sigma_{\Delta}$, it follows that this linear map is
compatible with the fans $\Sigma$ and $\Sigma^{\bfc}$ in the sense of
\cite[Definition~3.3.1]{CLS11}. By Theorem 3.3.4(a) in {\it
  loc. cit.}, $\varphi_{\bfcA}$ extends to a proper toric map
$\Phi_{\bfcA}\colon X_{\Sigma}\rightarrow \P^{\bfc}$.

Since $\Phi_{\bfcA}$ is a map of proper toric varieties and $\T_{M}$
is a dense open subset of $X_{\Sigma}$, 
\begin{displaymath}
  \Phi_{\bfcA}(X_{\Sigma})=\ov{\varphi_{\bfcA}(\T_{M})}= X_{\bfcA},
  \quad   (\Phi_{\bfcA})_{*}X_{\Sigma}=(\varphi_{\bfcA})_{*}\T_{M}= Z_{\bfcA},
\end{displaymath}
which completes the proof. 
\end{proof}

\begin{lemma} \label{lemm:1} Let notation be as in \eqref{eq:23}
  and \eqref{eq:27}. Then
\begin{displaymath}
  X_{\bfcA}\setminus \bigcup_{i=0}^{n}\bigcup_{j=0}^{c_{i}}V(x_{i,j})= \varphi_{\bfcA}(\T_{M}).
\end{displaymath}
\end{lemma}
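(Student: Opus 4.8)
The plan is to identify the set on the left-hand side with the open orbit of $\P^{\bfc}$ and then exploit that $\varphi_{\bfcA}$ is a homomorphism of algebraic tori. Denote by $\P^{\bfc}_{0}$ the open orbit of $\P^{\bfc}$. As recalled in the proof of Lemma~\ref{lemm:11}, the map $(\bfz_{0},\dots,\bfz_{m})\mapsto((1:\bfz_{0}),\dots,(1:\bfz_{m}))$ identifies $(\K^{\times})^{|\bfc|}$ with $\P^{\bfc}_{0}$; since a point of $\P^{\bfc}$ lies in $\P^{\bfc}_{0}$ exactly when all of its homogeneous coordinates are nonzero, we have $\P^{\bfc}_{0}=\P^{\bfc}\setminus\bigcup_{i,j}V(x_{i,j})$. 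Moreover the image of $\varphi_{\bfcA}$ is contained in $\P^{\bfc}_{0}$, and $\varphi_{\bfcA}\colon\T_{M}\to\P^{\bfc}_{0}$ is a homomorphism of tori.

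One inclusion is immediate: $\varphi_{\bfcA}(\T_{M})\subseteq X_{\bfcA}$ by the definition of $X_{\bfcA}$ in~\eqref{eq:27}, and $\varphi_{\bfcA}(\T_{M})\subseteq\P^{\bfc}_{0}$ by the previous paragraph, so $\varphi_{\bfcA}(\T_{M})\subseteq X_{\bfcA}\setminus\bigcup_{i,j}V(x_{i,j})$.

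For the reverse inclusion, the key point is that $\varphi_{\bfcA}(\T_{M})$ is Zariski closed in $\P^{\bfc}_{0}$. Indeed, $\varphi_{\bfcA}\colon\T_{M}\to(\K^{\times})^{|\bfc|}$ is a homomorphism of affine algebraic groups, so by Chevalley's theorem its image $H=\varphi_{\bfcA}(\T_{M})$ is a constructible subgroup of $(\K^{\times})^{|\bfc|}$; hence $H$ contains a dense open subset $U$ of its closure $\ov{H}$, and for every $g\in\ov{H}$ the translate $gU$ is again dense in $\ov{H}$ and therefore meets the nonempty open set $U$, which forces $g\in H$. Thus $H=\ov{H}$ is closed. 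Since $\P^{\bfc}_{0}$ is open in $\P^{\bfc}$ and $X_{\bfcA}=\ov{\varphi_{\bfcA}(\T_{M})}$, with the closure taken in $\P^{\bfc}$, we conclude
\begin{displaymath}
  X_{\bfcA}\setminus\bigcup_{i,j}V(x_{i,j})=X_{\bfcA}\cap\P^{\bfc}_{0}=\ov{\varphi_{\bfcA}(\T_{M})}\cap\P^{\bfc}_{0}=\varphi_{\bfcA}(\T_{M}),
\end{displaymath}
the last equality because $\varphi_{\bfcA}(\T_{M})$ is closed in $\P^{\bfc}_{0}$ while $\P^{\bfc}_{0}$ is open in $\P^{\bfc}$. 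The only nonroutine ingredient is the closedness of the image of the torus homomorphism; the rest is bookkeeping with the coordinate description of the open orbit.
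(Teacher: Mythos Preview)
Your proof is correct, and it takes a genuinely different route from the paper's.

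The paper first reduces to the case $M=L_{\bfcA}$, then invokes the proper morphism $\Phi_{\bfcA}\colon X_{\Sigma}\to\P^{\bfc}$ from Lemma~\ref{lemm:11} together with the orbit--cone correspondence: using that the linear map $A\colon N\to\Z^{\bfc}$ is injective (which is where the reduction $M=L_{\bfcA}$ is needed), it shows that every orbit $O(\sigma)$ with $\sigma\ne0$ is sent into the coordinate hyperplanes, while $O(0)=\T_{M}$ lands in~$\P^{\bfc}_{0}$. Your argument bypasses the compactification entirely: once one knows that $\varphi_{\bfcA}\colon\T_{M}\to\P^{\bfc}_{0}$ is a homomorphism of tori (a fact already recorded in the proof of Lemma~\ref{lemm:11}), the closedness of its image in $\P^{\bfc}_{0}$ is the standard group-theoretic fact that images of morphisms of algebraic groups are closed subgroups, and the rest is general topology about closures in open subsets. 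Your route is shorter and more elementary for this particular lemma; the paper's approach has the advantage of rehearsing exactly the orbit bookkeeping that is reused immediately afterwards in Proposition~\ref{prop:11}, so in context it is not wasted effort.
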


\begin{proof}
  By translating the subsets $\cA_{i}$ and restricting them to the
  sublattice~$L_{\bfcA}$, we can reduce without loss of generality to
  the case when $M=L_{\bfcA}$.  Assume that we are in this situation,
  and consider the morphism of proper toric varieties $
  \Phi_{\bfcA}\colon X_{\Sigma}\longrightarrow \P^{\bfc} $
  in~\eqref{eq:15} and its associated linear map $A\colon
  N\to\Z^{\bfc}$ as in \eqref{eq:58}.  For each cone $\sigma\in
  \Sigma$ we denote by $O(\sigma)$ the associated orbit under the
  orbit-cone correspondence explained in \cite[\S3.1]{Fulton:itv} and
  \cite[\S3.2]{CLS11}.

  The correspondence $\sigma\mapsto O(\sigma)$ is a bijection and so
  there is a decomposition
\begin{equation*}
 X_{\Sigma}=\bigsqcup_{\sigma\in \Sigma}O(\sigma) .
\end{equation*}

We have that $O(0)=\T_{M}$ and
$\Phi_{\bfcA}(O(0))=\varphi_{\bfcA}(\T_{M})$ is contained in $
\P^{\bfc}_{0}$, the open orbit of $\P^{\bfc}$. On the other hand, the
hypothesis that $M=L_{\bfcA}$ implies that the linear map $A$ is
injective and so, given $\sigma\in \Sigma\setminus \{0\}$, we have
that $A(\sigma)\ne 0$.  By \cite[Lemma 3.3.21(b)]{CLS11},
$\Phi_{\bfcA}(O({\sigma})) $ is contained in $\P^{\bfc}\setminus
\P^{\bfc}_{0}= \bigcup_{i,j}V(x_{i,j})$ . It follows that
\begin{displaymath}
  X_{\bfcA}\setminus \bigcup_{i,j}V(x_{i,j})=
\Big(  \bigcup_{\sigma\in\Sigma}\Phi_{\bfcA}(O(\sigma))\Big)\setminus \bigcup_{i,j}V(x_{i,j})= \Phi_{\bfcA}(O(0))=\varphi_{\bfcA}(\T_{M}),
\end{displaymath}
as stated. 
\end{proof}

Now suppose that the lattice polytope $\Delta$ has dimension $n$ and
let $\Gamma$ be a facet, that is, a face of $\Delta$ of codimension
1. Let $L_{\Gamma\cap M}\simeq \Z^{n-1}$ be the sublattice of $M$
generated by the differences of the lattice points of $\Gamma$ and
$\T_{L_{\Gamma\cap M}}\simeq (\K^{\times})^{n-1}$ its associated
torus.  Let $v({\Gamma})\in N$ denote the primitive inner normal vector
of $\Gamma$ and, for each $i$, set $\Gamma_{i}$ for the face of
$\Delta_{i}$ which minimizes the functional $v({\Gamma})\colon
M_{\R}\to \R$ on $\Delta_{i}$.

We consider the morphism $\varphi_{\bfcA,\Gamma}\colon \T_{L_{\Gamma\cap M}}\to
\P^{\bfc}$ given, for $\xi\in  \T_{L_{\Gamma\cap M}}$, by
\begin{equation} \label{eq:1}
  \varphi_{\bfcA,\Gamma}(\xi)_{i,j}=
  \begin{cases}
    \chi^{a_{i,j}}(\xi)& \text{ if } a_{i,j}\in \Gamma_{i}, \\
0 & \text{ otherwise.}
  \end{cases}
\end{equation}
Set $Z_{\bfcA,\Gamma}=(\varphi_{\bfcA,\Gamma})_{*}(\T_{L_{\Gamma\cap
    M}})\in Z_{n-1}(\P^{\bfc})$ for the associated multiprojective
toric cycle.

For a bounded subset $P\subset M_{\R}$, we define its \emph{support
  function} as the function $h_{P}\colon N_{\R}\to \R$ given, for $v\in
N_{\R}$, by
\begin{equation}\label{eq:18}
h_{P}(v)= \inf_{x\in P}\langle v,x\rangle. 
\end{equation}
The usual convention in convex analysis is to define support functions
as convex functions by putting a ``$\sup$'' instead of the ``$\inf$''
in the formula above as it is done, for instance, in \cite[page
37]{Schneider:cbbmt}. Our notion of support function gives a
\emph{concave} function, and is better suited to toric geometry.

\begin{proposition} \label{prop:11} Let notation be as above and let
$0\le i\le m$ and $0\le j\le c_{i}$. If $\dim(\Delta)=n$, then
\begin{equation}\label{eq:88}
  Z_{\bfcA}\cdot \div(x_{i,j})= \sum_{\Gamma}-h_{\Delta_{i}-a_{i,j}}(v({\Gamma})) \, Z_{\bfcA,\Gamma},
\end{equation}
where the sum is over the facets $\Gamma$ of $\Delta$. Otherwise,
$Z_{\bfcA}\cdot \div(x_{i,a_{i,j}})=0$.
\end{proposition}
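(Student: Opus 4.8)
The plan is to reduce everything to a computation on a fixed smooth (or at least complete) toric variety dominating $X_{\bfcA}$, and then use the projection formula together with the dictionary between torus-invariant Cartier divisors and lattice polytopes / support functions. Concretely, fix a fan $\Sigma$ refining $\Sigma_{\Delta}$ and let $\Phi_{\bfcA}\colon X_{\Sigma}\to\P^{\bfc}$ be the proper toric morphism from Lemma~\ref{lemm:11}, so that $Z_{\bfcA}=(\Phi_{\bfcA})_{*}X_{\Sigma}$. When $\dim(\Delta)<n$ the map $A$ of \eqref{eq:58} has nontrivial kernel, the image $X_{\bfcA}$ has dimension $\le n-1$, and $Z_{\bfcA}=0$, which settles that case immediately; so assume $\dim(\Delta)=n$, i.e. $A$ is injective and $\Sigma_{\Delta}$ is a fan.

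The first real step is to pull back the divisor. The rational function $x_{i,j}/x_{i,0}$ on $\P^{\bfc}$ restricts on the open orbit to the character $\chi^{a_{i,j}-a_{i,0}}$; hence $\div(x_{i,j})-\div(x_{i,0})$, pulled back under $\Phi_{\bfcA}$, is the principal (torus-invariant) divisor on $X_{\Sigma}$ associated to the lattice point $a_{i,j}-a_{i,0}\in M$. More usefully, $\Phi_{\bfcA}^{*}\div(x_{i,j})$ is the torus-invariant Cartier divisor $D_{i,j}$ on $X_{\Sigma}$ whose associated piecewise-linear support function is $v\mapsto h_{\Delta_{i}-a_{i,j}}(v)$ — this is exactly the standard description of the pullback of the $j$-th coordinate hyperplane of $\P^{c_i}$ under the map attached to the polytope $\Delta_i$ (after translating so the vertex $a_{i,j}$ sits at the origin). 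I would make this precise by working factor by factor: the $i$-th component of $\Phi_{\bfcA}$ is the toric morphism $X_{\Sigma}\to\P^{c_i}$ attached to $\cA_i$, and the pullback of $V(x_{i,j})$ is the effective toric divisor cut out by the section $\chi^{a_{i,j}}$ of the line bundle associated to $\Delta_i$, whose divisor of zeros is $\sum_{\tau}(-h_{\Delta_i-a_{i,j}}(v_\tau))D_\tau$, the sum over rays $\tau$ of $\Sigma$ with primitive generator $v_\tau$. Since $\div(x_{i,j})$ meets $X_{\bfcA}$ properly (the image of $\Phi_{\bfcA}$ is not contained in any coordinate hyperplane, as $\cA_i\ne\emptyset$), the projection formula \eqref{eq:38} gives
\[
Z_{\bfcA}\cdot\div(x_{i,j})=(\Phi_{\bfcA})_{*}\bigl(X_{\Sigma}\cdot\Phi_{\bfcA}^{*}\div(x_{i,j})\bigr)=(\Phi_{\bfcA})_{*}\Bigl(\sum_{\tau}-h_{\Delta_i-a_{i,j}}(v_\tau)\,V(\tau)\Bigr),
\]
where $V(\tau)$ is the torus-invariant prime divisor of $X_{\Sigma}$ corresponding to the ray $\tau$.

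The second step is to identify the pieces. For a ray $\tau$ of $\Sigma$, its image $O(\tau)$ maps under $\Phi_{\bfcA}$ into the torus orbit of $\P^{\bfc}$ determined by which coordinates vanish, and a direct comparison of \eqref{eq:23} with \eqref{eq:1} shows that $(\Phi_{\bfcA})_{*}V(\tau)$ is, up to the degree of the map on $V(\tau)$, the toric cycle $Z_{\bfcA,\Gamma}$ where $\Gamma$ is the face of $\Delta$ determined by $v_\tau$. Now two things can happen: either $v_\tau$ lies in the relative interior of a maximal cone of $\Sigma_{\Delta}$ corresponding to a vertex of $\Delta$ — then $\Gamma$ is a vertex, $Z_{\bfcA,\Gamma}$ has dimension $<n-1$ and contributes $0$ in $Z_{n-1}$ — or $v_\tau$ is a primitive vector in the closure of the codimension-one cones of $\Sigma_{\Delta}$, i.e. $v_\tau=v(\Gamma)$ for a facet $\Gamma$ of $\Delta$. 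Moreover the value $h_{\Delta_i-a_{i,j}}(v_\tau)$ depends only on the cone of $\Sigma_{\Delta}$ containing $v_\tau$, and is linear on each such cone; summing the contributions of all rays of $\Sigma$ lying in a given codimension-one cone of $\Sigma_{\Delta}$ collapses, by linearity of the support function and additivity of pushforward, to the single term indexed by the facet $\Gamma$ with coefficient $-h_{\Delta_i-a_{i,j}}(v(\Gamma))$ and cycle $Z_{\bfcA,\Gamma}$. This yields \eqref{eq:88}. The refinement-independence is automatic since the left-hand side does not involve $\Sigma$.

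The step I expect to be the main obstacle is the bookkeeping in this last reduction: one must check carefully that $(\Phi_{\bfcA})_{*}V(\tau)$ equals $Z_{\bfcA,\Gamma}$ on the nose (not merely up to a multiplicative constant), i.e. that the map $\Phi_{\bfcA}$ restricted to the orbit closure $V(\tau)$ is birational onto its image and agrees with $\varphi_{\bfcA,\Gamma}$ under the identification of $V(\tau)$ with (a quotient of) $\T_{L_{\Gamma\cap M}}$ — this requires tracking lattices: the sublattice $L_{\Gamma\cap M}$ and the projection $v(\Gamma)^{\perp}\cap M$ must be reconciled, and one needs that the relevant index equals $1$ precisely because we work with cycles (direct images) rather than set-theoretic images. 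The rest — pulling back coordinate hyperplanes to polytope divisors, and the linearity/collapsing argument over $\Sigma_{\Delta}$ — is standard toric geometry and should go through routinely using \cite{Fulton:itv, CLS11}.
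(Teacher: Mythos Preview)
Your approach is essentially the paper's: use the projection formula to pull the intersection up to a proper toric variety dominating $X_{\bfcA}$, identify $\Phi_{\bfcA}^{*}\div(x_{i,j})$ as the toric Cartier divisor with support function $h_{\Delta_i-a_{i,j}}$, expand it as $\sum_\tau -h_{\Delta_i-a_{i,j}}(v_\tau)\,V(\tau)$, and push each $V(\tau)$ forward to $Z_{\bfcA,\Gamma}$. The paper does exactly this, including your ``main obstacle'': it checks $\tau^{\perp}\cap M=L_{\Gamma\cap M}$ and that $\Phi_{\bfcA}\circ\iota_\tau=\varphi_{\bfcA,\Gamma}$ via \cite[Lemma 3.2.5]{CLS11}, whence $(\Phi_{\bfcA})_*V(\tau)=Z_{\bfcA,\Gamma}$ on the nose.

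The one difference is that the paper takes $\Sigma=\Sigma_\Delta$ itself (legitimate since $\dim\Delta=n$ makes $\Sigma_\Delta$ a fan), so the rays of $\Sigma$ are \emph{exactly} the inner normals to facets of $\Delta$ and no extra bookkeeping is needed. Your choice of a general refinement introduces extra rays, and your handling of them is muddled. The dichotomy ``$v_\tau$ in the interior of a maximal cone'' versus ``$v_\tau=v(\Gamma)$ for a facet'' is incomplete: a ray of $\Sigma$ can land in the relative interior of a cone of $\Sigma_\Delta$ of any dimension $\ge 2$, not just $n$. And the ``summing \ldots\ collapses by linearity of the support function'' argument is both unnecessary and wrong as stated: a one-dimensional cone of $\Sigma_\Delta$ contains exactly one ray of any refinement (itself), so there is nothing to sum. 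What actually suffices is simpler: any ray $\tau$ of $\Sigma$ that is not already a ray of $\Sigma_\Delta$ has $v_\tau$ in the relative interior of a cone of dimension $\ge 2$, so the corresponding face of $\Delta$ has dimension $\le n-2$, hence $\dim\Phi_{\bfcA}(V(\tau))\le n-2$ and $(\Phi_{\bfcA})_*V(\tau)=0$ in $Z_{n-1}$. Working with $\Sigma_\Delta$ from the start avoids this detour entirely.
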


\begin{proof}
  By symmetry, we can suppose without loss of generality that $i=j=0$.
  Consider first the case when $\dim(\Delta)=n$. Then
  $\Sigma_{\Delta}$ is a fan and so, by Lemma \ref{lemm:11}, the map
  $\varphi_{\bfcA}$ extends to a morphism of proper toric varieties
  \begin{equation*}
\Phi_{\bfcA}\colon  X_{\Sigma_{\Delta}}\longrightarrow X_{\bfcA}
  \end{equation*}
and $Z_{\bfcA}=(\Phi_{\bfcA})_{*}X_{\Sigma_{\Delta}}$. 

Set $D=(\Phi_{\bfcA})^{*}\div(x_{0,0})\in \Div(X_{\Sigma_{\Delta}})$.
By the projection formula \eqref{eq:38},
\begin{equation}\label{eq:60}
Z_{\bfcA}\cdot \div(x_{0,0})= (\Phi_{\cA})_{*}(X_{\Sigma_{\Delta}}\cdot D).
\end{equation}

Let $\Psi_{D}\colon N_{\R}\to \R$ be the virtual support function of
$D$ under the correspondence in \cite[Theorem 4.2.12]{CLS11}. Using either
Theorem 4.2.12(b) in {\it loc. cit.} or \cite[Lemma, page
61]{Fulton:itv}, it follows that
\begin{equation}\label{eq:61}
X_{\Sigma_{\Delta}}\cdot D= \sum_{\tau} -\Psi_{D}(v_{\tau}) V(\tau),
\end{equation}
the sum being over the rays $\tau$ of ${\Sigma_{\Delta}}$, where
$v_{\tau}$ denotes the first nonzero vector in $\tau \cap N$ and
$V(\tau)$ denotes the $\T_{M}$-invariant prime Weil divisor of
$X_{\Sigma_{\Delta}}$ determined by~$\tau$.

Let $\Delta^{c_{0}}=\conv(\bfzero,\bfe_{0,1},\dots, \bfe_{0,c_{0}})$
be the standard simplex of $\R^{c_{0}}$ and $\Delta^{c_{0}}\times
\{\bfzero\}$ its immersion into $ \R^{\bfc}$. We can verify that the
virtual support function associated to the Cartier divisor
$\div(x_{0,0}) \in\Div(\P^{\bfc})$ under the correspondence in
\cite[Theorem~4.2.12]{CLS11} coincides with $h_{\Delta^{c_{0}}\times
  \{\bfzero\}}$, the support function of this polytope.  By {\it
  loc. cit}, Proposition~6.2.7, $\Psi_{D}= h_{\Delta^{c_{0}}\times
  \{\bfzero\}}\circ A$ where $A\colon N\to\Z^{\bfc}$ denotes the
linear map in~\eqref{eq:58}. This implies that
\begin{equation}\label{eq:62}
\Psi_{D}=h_{\Delta_{0}-a_{0,0}},
\end{equation}
the support function of the translated polytope ${\Delta_{0}-a_{0,0}}\subset
M_{\R}$.

By construction, the rays of $\Sigma_{\Delta}$ are the inner normal
directions of the facets of~$\Delta$.  For each ray $\tau$, the prime
Weil divisor $V(\tau)$ is the closure of the orbit $O(\tau)$
associated to $\tau$ under the orbit-cone correspondence.  We denote
by $\tau^{\bot}$ the subspace of $M_{\R}$ orthogonal to $\tau$ and by
$\iota_{\tau}\colon \T_{\tau^{\bot}\cap M}\to O(\tau)$ the isomorphism
in \cite[Lemma 3.2.5]{CLS11}.

Let $\Gamma$ be the facet of $\Delta$ corresponding to $\tau$. Hence,
$v_{\tau}=v({\Gamma})$, the primitive inner normal vector of
$\Gamma$. We can verify that $\tau^{\bot}\cap M=L_{\Gamma\cap M}$ and
so $\T_{\tau^{\bot}\cap M}=\T_{L_{\Gamma\cap M}} $, and that the
composition $\Phi_{\bfcA}\circ \iota_{\tau}$ coincides with the map
$\varphi_{\bfcA,\Gamma}$ in \eqref{eq:1}.  Hence
\begin{equation}
  \label{eq:87}
(\Phi_{\bfcA})_{*}V(\tau)=Z_{\bfcA,\Gamma}.   
\end{equation}
The formula \eqref{eq:88} then follows from \eqref{eq:60},
\eqref{eq:61}, \eqref{eq:62} and \eqref{eq:87}.

In the case when $\dim(\Delta)<n$, we have that $\rank(L_{\bfcA})<n$.
It follows that $Z_{\bfcA}=0$ by \eqref{eq:39} and, {\it a fortiori},
that $Z_{\bfcA}\cdot \div(x_{0,0})=0$.
\end{proof}

\subsection{Root counting on algebraic tori} \label{sec:root-count-algebr}
     
It is well-known that the number of roots of a family of Laurent
polynomial is related to the combinatorics of the exponents appearing
in its monomial expansion. For the convenience of the reader, we
recall the results in this direction that we will use in the sequel.

We denote by $\vol_{M}$  the Haar measure on $M_{\R}$ normalized so
that $M$ has covolume~1. 
The \emph{mixed volume} of a family of compact bodies $Q_1,
\dots, Q_n\subset M_{\R}$ is defined as
\begin{equation} \label{defmultivolume}
\MV_M(Q_1, \dots, Q_n) = \sum_{j=1}^n (-1)^{n - j} \sum_{1 \le i_1 < \cdots < i_j \le n} \vol_M(Q_{i_1} + \cdots + Q_{i_j}).
\end{equation} 
For $n=0$ we agree that $ \MV_{M}=1$.     

 We have that $\MV_M(Q, \dots, Q) = n! \vol_M(Q)$ and so the mixed
 volume can be seen as a generalization of the volume of a convex
 body.  The mixed volume is symmetric and linear in each variable
 $Q_i$ with respect to the Min\-kows\-ki sum, invariant with respect
 to isomorphisms of lattices, and monotone with respect to the
 inclusion of compact bodies of $M_{\R}$, see for instance
 \cite[\S7.4]{CLO05} or \cite [Chapter 5]{Schneider:cbbmt}.

 Let $K$ be a field and $\ov K$ its algebraic closure. Given a square
 family of Laurent polynomials $f_{i}\in K[M]$, $i=1,\dots, n$, we
 denote by $Z(f_{1},\dots,f_{n})$ the cycle on $\T_{M,\ov K}$, given
 by its isolated roots together with their corresponding
 multiplicities. In precise terms,
\begin{equation}
  \label{eq:63}
  Z(f_{1},\dots,f_{n})= \sum_{\xi} m_{\xi} \, \xi,
\end{equation}
the sum being over the isolated points $\xi$ of $V(f_{1},\dots,
f_{n})\subset \T_{M,\ov K}$ and where, if $I(\xi)\subset \ov K[M]$
denotes the ideal of $\xi$, the multiplicity $m_{\xi}$ is given by
\begin{equation}\label{eq:86}
m_{\xi}=\dim_{\ov K}( \ov K[M]/(f_{1},\dots, f_{n})) _{I({\xi})}.
\end{equation}

Write 
\begin{equation}\label{eq:57}
  f_{i}=\sum_{j=0}^{c_{i}}\alpha_{i,j}\chi^{a_{i,j}}, \quad i=1,\dots, n,
\end{equation}
with $\alpha_{i,j}\in K^{\times}$ and $a_{i,j}\in M$. The \emph{Newton
  polytope} of $f_{i}$ is  given by
\begin{equation*}
 \Delta_{i}=\newton(f_{i})=\conv (a_{i,0},\dots, a_{i,c_{i}})\subset M_{\R}.
\end{equation*}

For $v\in N_{\R}$, we denote by $\Delta_{i,v}\subset M_{\R}$ the
subset of points of $\Delta_{i}$ whose weight in the direction of $v$
is minimal. It is a face of $\Delta_{i}$. We also set
\begin{displaymath}
  f_{i,v}=\sum_{j}\alpha_{i,j}\chi^{a_{i,j}}\in K[M], \quad i=1,\dots,n,
\end{displaymath}
the sum being over $0\le j\le c_{i}$ such that $a_{i,j}\in
\Delta_{i,v}$. 

Bernstein's theorem {\cite[Theorem~B]{Ber75}} states that, if
$\car(K)=0$ and, for all $v\in N\setminus \{0\}$, the family
$f_{i,v}$, $i=1,\dots, n$, has no root in $\T_{M,\ov K}$, then
$V(f_{1},\dots, f_{n})$ is finite and
 \begin{equation}\label{eq:46}
   \deg(Z(f_{1},\dots,f_{n})) =   \MV_{M}(\Delta_{1},\dots, \Delta_{n}).
 \end{equation}
This statement also holds for an arbitrary field $K$ \cite[Proposition
1.4]{PS:rbke}.

When $K$ is endowed with a discrete valuation $\val\colon K^{\times}\to
 \R$, there is a
 refinement of Bernstein's theorem due to Smirnov
 \cite{Smi97:tsodvr}, that gives a combinatorial expression for the
 number of roots with a given valuation.

 To state it properly, let $K^{\circ}$ and $K^{\circ\circ}$ denote the
 valuation ring and its maximal ideal associated to the pair
 $(K,\val)$. Let $\kappa$ be a uniformizer of $K^{\circ}$, that is, a
 generator of $K^{\circ\circ}$, and $k=K^{\circ}/K^{\circ\circ }$ the
 residue field. For $\alpha\in K$, the \emph{initial part} of $\alpha$
 with respect to $\kappa$, denoted by $ \init_{\kappa}(\alpha) $, is
 defined as the class in $k$ of the element
 $\kappa^{-\val(\alpha)}\alpha\in K^{\circ}$.

 Consider also an arbitrary extension of the valuation to $\ov
 K$. Since $\T_{M,\ov K}=N_{\R}\otimes \ov K^{\times}$, this valuation
 induces a map $\T_{M,\ov K}\rightarrow N_{\R}$, that we also denote
 by $\val$. For a square family of Laurent polynomials as before and
 $w\in N_{\R}$, we consider  the cycle on $\T_{M,\ov K}$ given by
\begin{equation*}
  Z(f_{1},\dots,f_{n})_{w}= \sum_{\xi} m_{\xi} \, \xi,
\end{equation*}
the sum being over the isolated points $\xi$ of $V(f_{1},\dots,
f_{n})\subset \T_{M,\ov K}$ such that $\val(\xi)=w$, and with
multiplicities $m_{\xi}$ as in \eqref{eq:63}.

For $i=1,\dots, n$, we consider the lifted polytope of $f_{i}$ defined
as
\begin{equation}\label{eq:26}
  \wt \Delta_{i}=\conv ((a_{i,0}, -\val(\alpha_{i,0})),\dots,
  (a_{i,c_{i}},-\val(\alpha_{i,c_{i}})))\subset M_{\R} \times \R.
\end{equation}
Given $w\in N_{\R}$, we denote by $ \wt \Delta_{i,(w,1)}\subset
M_{\R}\times \R$ the subset of points of $\wt \Delta_{i}$ whose weight
in the direction of $(w,1)$ is minimal. It is a face of this lifted
polytope contained in its upper envelope. Then we set
$\Delta_{i,(w,1)}\subset M_{\R}$ for the image of this face under
the projection $M_{\R}\times \R\to M_{\R}$. We also set
\begin{displaymath}
  f_{i,(w,1)}=\sum_{j}\init_{\kappa}(\alpha_{i,j})\chi^{a_{i,j}}
  \in k[M], \quad i=1,\dots, n,
\end{displaymath}
the sum being over $0\le j\le c_{i}$ such that
$(a_{i,j},-\val(\alpha_{i,j}))\in \wt \Delta_{i,(w,1)}$. 

In this situation, Smirnov's theorem \cite[Theorem
3.2.2(b)]{Smi97:tsodvr} states that if, for all $w\in N$ such that
$\dim(\sum_{i=1}^{n}\wt \Delta_{i,(w,1)})<n$ the family of Laurent
polynomials $f_{i,(w,1)}\in k[M]$, $i=1,\dots, n$, has no root in
$\T_{M,\ov k}$, then, for any $w_{0}\in N_{\R}$, the set of points of
$V(f_{1},\dots,f_{n})$ with valuation $w_{0}$ is finite and
 \begin{equation}\label{eq:102}
\deg(Z(f_{1},\dots,f_{n})_{w_{0}})=\MV_{M}(\Delta_{1,(w_{0},1)},\dots, \Delta_{n,(w_{0},1)}).
 \end{equation}

 We are interested in the following generic situation. For $i=1,\dots,
 n$, let $\bfu_{i}=\{u_{i,0}, \dots, u_{i,c_{i}}\} $ be a set of
 $c_{i}+1$ variables and set $\ov\bfu=\{\bfu_{1},\dots, \bfu_{n}\}$.
 For a polynomial $R=\sum_{\bfb}\beta_{\bfb}\ov \bfu^{\bfb}\in K[\ov
 \bfu]$, we set
\begin{equation} \label{eq:103}
  \val(R)= \min_{\bfb}\val (\beta_{\bfb}).
\end{equation}
By Gauss' lemma, this gives a discrete valuation on the field
$\F:=K(\ov \bfu)$ that extends $\val$. We then consider an arbitrary
extension of this valuation to the algebraic closure~$\ov{\F}$ and the
associated map $\val\colon \T_{M,\ov \F}\to N_{\R}$ as before. We denote
by $\mathfrak{f}$ the residue field of~$\F$. 

\begin{proposition} \label{prop:8} With notation as above, set
  \begin{displaymath}
   F_{i}=\sum_{j=0}^{c_{i}}u_{i,j} \chi^{a_{i,j}}\in K[\bfu_{i}][M],
   \quad i=1,\dots, n.     
  \end{displaymath}
  Then $V(F_{1},\dots, F_{n})\subset\T_{M,\ov \F}$ is finite, $\deg(Z(F_{1},\dots,
  F_{n}))=\MV_{M}(\Delta_{1},\dots, \Delta_{n})$, and $\val(\xi)=0$ for
  all $\xi\in V(F_{1},\dots, F_{n})$.
\end{proposition}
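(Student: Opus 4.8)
The plan is to deduce the finiteness and the degree count from Bernstein's theorem~\eqref{eq:46}, and the statement that every root has valuation~$0$ from Smirnov's theorem~\eqref{eq:102}. Applying either theorem requires checking that the relevant face systems — which are again systems of Laurent polynomials with algebraically independent coefficients — have no common root in the torus, and I would establish this by induction on~$n$.

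The heart of the argument is the following claim~($\star$), which I would prove by induction on $n\ge 0$: if $\cB_1,\dots,\cB_n\subset M$ are nonempty finite subsets and $G_i=\sum_{a\in\cB_i}w_{i,a}\chi^{a}$ with the $w_{i,a}$ algebraically independent over a ground field, then $V(G_1,\dots,G_n)$ is empty whenever $\dim\big(\sum_i\conv(\cB_i)\big)<n$, and is finite with $\deg\big(Z(G_1,\dots,G_n)\big)=\MV_M\big(\conv(\cB_1),\dots,\conv(\cB_n)\big)$ otherwise. For the empty case, let $L\subset M$ be the saturated sublattice of rank $r<n$ such that $\sum_i\conv(\cB_i)$ lies in a translate of $L\otimes\R$; then each $\cB_i$ lies in a translate of $L$, and after multiplying each $G_i$ by a suitable monomial — which does not affect its roots in the torus — the system is pulled back along the surjection $\T_M\to\T_L$ induced by $L\hookrightarrow M$. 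Applying~($\star$) in rank $r$ to the first $r$ of the resulting polynomials yields a finite (possibly empty) root set $\{\xi_1,\dots,\xi_s\}\subset\T_L$; since the coefficients of the $(r{+}1)$-st polynomial are algebraically independent over the field generated by the coefficients of the first $r$ ones (over which each $\xi_\ell$ is algebraic) and characters do not vanish on the torus, this $(r{+}1)$-st polynomial is nonzero at every $\xi_\ell$, so the system has no common root. For the case $\dim\big(\sum_i\conv(\cB_i)\big)=n$, each $v\in N\setminus\{0\}$ gives faces $\conv(\cB_i)_v$ lying in parallel hyperplanes orthogonal to $v$, so the face system $G_{1,v},\dots,G_{n,v}$ is covered by the empty case just proved; Bernstein's theorem~\eqref{eq:46} then gives the finiteness and the mixed-volume count.

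Granting~($\star$), the proposition is the case $w_{i,a}=u_{i,j}$ over $K$: if $\dim\Delta<n$ then $V(F_1,\dots,F_n)=\emptyset$ and everything is vacuous (and $\MV_M(\Delta_1,\dots,\Delta_n)=0$), while if $\dim\Delta=n$ we obtain finiteness and $\deg\big(Z(F_1,\dots,F_n)\big)=\MV_M(\Delta_1,\dots,\Delta_n)$. For the valuation statement I would apply Smirnov's theorem to the Gauss valuation on $\F=K(\ov\bfu)$. As each $u_{i,j}$ is a variable, $\val(u_{i,j})=0$ by~\eqref{eq:103}, so the lifted polytope of $F_i$ is $\Delta_i\times\{0\}$, one has $\Delta_{i,(w,1)}=\Delta_{i,w}$ for all $w$, and $\init_\kappa(u_{i,j})$ is simply the image of $u_{i,j}$ in the residue field $k(\ov\bfu)$; thus each reduced face system is again a generic system. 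For $w\ne 0$ the faces $\Delta_{i,w}$ lie in parallel hyperplanes, so by the empty case of~($\star$) over the residue field the reduced face system has no root in the torus, while for $w=0$ one has $\dim\big(\sum_i\Delta_{i,(0,1)}\big)=\dim\Delta=n$ and nothing need be checked. Smirnov's theorem~\eqref{eq:102} with $w_0=0$ then gives $\deg\big(Z(F_1,\dots,F_n)_0\big)=\MV_M(\Delta_1,\dots,\Delta_n)$, which equals $\deg\big(Z(F_1,\dots,F_n)\big)$ by the previous step; since $Z(F_1,\dots,F_n)$ is the sum of the effective cycles $Z(F_1,\dots,F_n)_{w_0}$ over $w_0\in N_\R$, this forces $Z(F_1,\dots,F_n)=Z(F_1,\dots,F_n)_0$, i.e.\ $\val(\xi)=0$ for every root~$\xi$.

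The main obstacle is the inductive claim~($\star$): arranging the induction so that the empty case at level $n$ follows from the counting case at levels below $n$, and making rigorous the reduction to a lower-rank torus together with the genericity step that a polynomial with fresh algebraically independent coefficients cannot vanish on a finite set defined over a smaller field. The remaining ingredients are routine applications of Bernstein's and Smirnov's theorems, together with the translation- and lattice-invariance of mixed volumes, which also yields $\MV_M=0$ whenever the polytopes lie in a common proper affine subspace.
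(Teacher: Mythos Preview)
Your proposal is correct and follows essentially the same approach as the paper: Bernstein's theorem for finiteness and the degree count, Smirnov's theorem at $w_0=0$ for the valuation claim, with both applications reduced to showing that the face systems (generic Laurent polynomials whose supports lie in a proper affine subspace) have no common root in the torus. The only difference is one of detail: the paper dispatches this last fact in a single line (``$n$ general Laurent polynomials in $n-1$ variables have no roots''), whereas you supply a careful inductive proof via your claim~($\star$).
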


\begin{proof}
Let $v\in N\setminus \{0\}$. Since $\Delta_{i,v}$ lies in a
translate of the orthogonal space $v^{\bot}$, the roots of in the
torus of the system
 $F_{i,v}$, $i=1,\dots,n$, are the roots of an equivalent system of
 $n$ general Laurent polynomials in $n-1$ variables. Hence, this set
 of roots is empty, and Bernstein's theorem \eqref{eq:46} implies the
 first and the second claims.

 For the last claim, denote by $\wt{\Delta_{i}} \subset M_{\R}\times
 \R$ the lifted polytope in associated to $F_{i}$ as in \eqref{eq:26},
 $i=1,\dots, n$. Since $\val(u_{i,j})=0$ for all $j$, we have that $
 \wt\Delta_{i,(w,1)}= \Delta_{i,w}\times \{0\} $. We deduce that, for $w\in N$,
 $ \wt\Delta_{i,(w,1)}= \Delta_{i,w}\times \{0\} $ and $F_{i,(w,1)}$
 coincides with the class of $F_{i,w}$ in the polynomial ring
 $\mathfrak{f}[\ov \bfu]$.

Suppose now that $\dim(\sum_{i=1}^{n}\wt \Delta_{i,(w,1)})<n$. Then
$\dim(\sum_{i=1}^{n} \Delta_{i,w})<n$ and, similarly as before, the
system $F_{i,(w,1)}$, $i=1,\dots, n$, has no roots in $\T_{M,\ov{  \mathfrak{f}}}$.
Smirnov's theorem applied to the case when  $f_{i}=F_{i}$, $i=1,\dots,
n$, and $w_{0}=0$, implies that 
\begin{displaymath}
\deg(Z(F_{1},\dots,
  F_{n})_{0})=\MV_{M}(\Delta_{1},\dots, \Delta_{n})=\deg(Z(F_{1},\dots,
  F_{n})).
\end{displaymath}
Hence all points of  $V(F_{1},\dots, F_{n})$ have valuation 0, which
concludes the proof.
\end{proof}

 \section{Basic properties of sparse eliminants and
   resultants} \label{sec:basic-prop-sparse}

 In this section, we show that the sparse eliminant and the sparse
 resultant respectively coincide with the eliminant and the resultant
 of a multiprojective toric variety/cycle. Using this interpretation, we
 derive some of their basic properties from the corresponding ones for
 general eliminants and resultants.

 We will freely use the notation in \S\ref{sec:toric-vari-cycl} with
 $K=\Q$ and $\K=\C$. We also set $m=n$ so that, in particular, we have that
 $\cA_{i}=\{a_{i,0},\dots, a_{i,c_{i}}\}$, $i=0,\dots, n$, is a family
 of $n+1$ nonempty finite subsets of~$M$ or \emph{supports}. We denote
 by $\Delta_{i}=\conv(\cA_{i})$ the convex hull of $\cA_{i}$.

For $i=0,\dots, n$, let $\bfu_{i}=\{u_{i,0}, \dots, u_{i,c_{i}}\} $ be
a set of $c_{i}+1$ variables. Set $\bfu=\{\bfu_{0},\dots, \bfu_{n}\}$,
so that $\C[\bfu]=\C[\bfu_{0},\dots, \bfu_{n}]$ is the
multihomogeneous coordinate ring of the multiprojective space
\begin{displaymath}
\P^{\bfc}=\prod_{i=0}^{n}\P^{c_{i}}_{\C}.  
\end{displaymath}
For each $i$, we consider 
 the general Laurent polynomial with support
 $\cA_{i}$ given by
\begin{equation}\label{eq:24}
  F_{i}=\sum_{j=0}^{c_{i}}u_{i,j} \chi^{a_{i,j}}\in \Q[\bfu_{i}][M]. 
\end{equation}
We set for short
\begin{equation}\label{eq:98}
   \bfcA=(\cA_{0},\dots, \cA_{n}), \quad 
  \Delta=\sum_{i=0}^{n}\Delta_{i} \quad \text{ and } \quad 
  \bfF=(F_{0},\dots, F_{n})
\end{equation}

The incidence variety of the family $\bfF$ is 
\begin{equation*} 
  \Omega_{\bfcA}=\{(\xi,\bfu)\mid
  F_{0}(\bfu_{0},\xi)=\dots=F_{n}(\bfu_{n},\xi)=0\} \subset \T_{M}\times \P^{\bfc}, 
\end{equation*}
which is an irreducible subvariety of codimension $n+1$ defined over
$\Q$. We denote by  $\pi\colon \T_{M}\times \P^{\bfc}\to \P^{\bfc}$  the
projection onto the second factor.

\begin{definition} \label{def:2} The \emph{$\bfcA$-eliminant} or
  \emph{sparse eliminant}, denoted by $\Elim_{\bfcA} $, is defined as
  any irreducible polynomial in $\Z[\bfu]$ giving an equation for the
  closure of the image
  $\ov{\pi(\Omega_{\bfcA})}$, if this  is a hypersurface,
  and as $1$ otherwise.
  
  The \emph{$\bfcA$-resultant} or \emph{sparse resultant}, denoted by
  $\Res_{\bfcA}$, is defined as any primitive polynomial in $\Z[\bfu]$
  giving an equation for the direct image
  $\pi_{*}\Omega_{\bfcA}$.
\end{definition}

Both the sparse eliminant and the sparse resultant are well-defined up
to a sign.  It follows from these definitions that there exists ${d_{\bfcA}}
\in \N$ such that
\begin{equation} \label{eq:77}
  \Res_{\bfcA}=\pm\Elim_{\bfcA}^{d_{\bfcA}},
\end{equation}
with $d_{\bfcA}$ equal to the  degree of the restriction of $\pi$
to the incidence variety  $\Omega_{\bfcA}$.

Let $Z_{\bfcA}$ be the multiprojective toric cycle on $\P^{\bfc}$ as in \eqref{eq:27}
and $|Z_{\bfcA}|$ its support. Both are defined over $\Q$, and we will
consider their eliminants and resultants, in the sense of Definitions
\ref{def:3} and \ref{def:6}, with respect to the ring $A=\Z$.

\begin{proposition} \label{prop:1} Let notation be as before and set
  $\bfe_{i}$, $i=0,\dots, n$, for the standard basis of
  $\Z^{n+1}$. Then
\begin{displaymath}
  \Elim_{\bfcA}=\pm \Elim_{\bfe_{0}, \dots, \bfe_{n}}(|Z_{\bfcA}|) \quad
  \text{ and } \quad \Res_{\bfcA}=\pm \Res_{\bfe_{0}, \dots, \bfe_{n}}(Z_{\bfcA}). 
\end{displaymath}
\end{proposition}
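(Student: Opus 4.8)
The plan is to exploit the monomial parametrization $\varphi_{\bfcA}\colon \T_M\to \P^{\bfc}$ to identify the incidence variety $\Omega_{\bfcA}$ with (an open dense subset of) the incidence variety $\Omega_{|Z_{\bfcA}|,\bfe}$ appearing in Definition~\ref{def:3}, and to track the corresponding direct images under $\pi$ and $\rho$. First I would observe that $\P^{\bfc}=\prod_{i=0}^n\P^{c_i}_\C$ with $c_i=\#\cA_i-1$ is precisely the ambient space of Definitions~\ref{def:3} and \ref{def:6} when one takes $\bfn=\bfc$, $r=n$ and $\bfd=(\bfe_0,\dots,\bfe_n)$: indeed each $\bfd_i=\bfe_i$ gives $N_i=\#\N^{\bfc+\bfone}_{\bfe_i}-1=c_i$, so $\P^{\bfN}=\P^{\bfc}$ and the general multihomogeneous polynomial $F_i=\sum_j u_{i,j}x_{i,j}$ of multidegree $\bfe_i$ is the linear form in the $i$-th block. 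The key point is that pulling back $F_i$ along $\varphi_{\bfcA}$ — i.e.\ substituting $x_{i,j}=\chi^{a_{i,j}}$ — gives exactly the general Laurent polynomial $F_i=\sum_j u_{i,j}\chi^{a_{i,j}}$ of \eqref{eq:24}. Hence, via the graph of $\varphi_{\bfcA}$, the variety $\Omega_{\bfcA}\subset \T_M\times\P^{\bfc}$ maps isomorphically onto its image in $\varphi_{\bfcA}(\T_M)\times\P^{\bfc}\subset |Z_{\bfcA}|\times\P^{\bfc}$, and this image is the open subset of $\Omega_{|Z_{\bfcA}|,(\bfe_0,\dots,\bfe_n)}$ lying over $\varphi_{\bfcA}(\T_M)$.

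Next I would handle the two cases separately. If $\rank(L_{\bfcA})<n$, then by \eqref{eq:39} we have $\dim X_{\bfcA}\le n-1$, so $Z_{\bfcA}=0$ and $\Res_{\bfe_0,\dots,\bfe_n}(Z_{\bfcA})=1$; on the other hand the $n+1$ linear conditions $F_i=0$ on a torus whose monomials span a lattice of rank $<n$ force $\Omega_{\bfcA}$ to project to something of codimension $\ge 2$ in $\P^{\bfc}$ (equivalently, generic $f_1,\dots,f_n$ already have no common root, so no hypersurface condition on all of $\bfu$ appears), whence $\Elim_{\bfcA}=\Res_{\bfcA}=1$ as well, and one checks $\Elim_{\bfe_0,\dots,\bfe_n}(|Z_{\bfcA}|)=1$ since $|Z_{\bfcA}|=X_{\bfcA}$ has dimension $<n$. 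In the main case $\rank(L_{\bfcA})=n$, Lemma~\ref{lemm:1} gives $X_{\bfcA}\setminus\bigcup_{i,j}V(x_{i,j})=\varphi_{\bfcA}(\T_M)$, so the complement of $\varphi_{\bfcA}(\T_M)$ in $X_{\bfcA}=|Z_{\bfcA}|$ is contained in the union of the coordinate hyperplanes; since $\rho(\Omega_{|Z_{\bfcA}|,\bfe})$ and $\pi(\Omega_{\bfcA})$ are defined by a polynomial that is \emph{primitive} (in particular not divisible by any $u_{i,j}$, because the generic point of $\Omega_{\bfcA}$ has all $\chi^{a_{i,j}}(\xi)\neq 0$), removing the part of $\Omega_{|Z_{\bfcA}|,\bfe}$ over those coordinate hyperplanes does not change the direct image in $Z_n(\P^{\bfc})$ nor the set-theoretic image up to the hyperplanes $V(u_{i,j})$. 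Therefore $\pi_*\Omega_{\bfcA}=\rho_*\Omega_{|Z_{\bfcA}|,(\bfe_0,\dots,\bfe_n)}$ and $\ov{\pi(\Omega_{\bfcA})}=\rho(\Omega_{|Z_{\bfcA}|,(\bfe_0,\dots,\bfe_n)})$ as subvarieties (resp.\ cycles) of $\P^{\bfc}$. Comparing Definition~\ref{def:2} with Definitions~\ref{def:3} and~\ref{def:6} then yields the equalities, each up to a unit of $\Z$, i.e.\ up to a sign; linearity of the resultant over the irreducible components handles the fact that $Z_{\bfcA}$ may be a multiple of $|Z_{\bfcA}|$ — but note $\varphi_{\bfcA}$ is generically injective onto $X_{\bfcA}$ exactly when $L_{\bfcA}=M$, and in general the multiplicity in $Z_{\bfcA}=(\varphi_{\bfcA})_*\T_M$ equals $[M:L_{\bfcA}]$, which is consistent with \eqref{eq:77} and the degree of $\pi|_{\Omega_{\bfcA}}$.

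The main obstacle I anticipate is the bookkeeping around the boundary divisors: one must argue carefully that the direct image cycle $\rho_*\Omega_{|Z_{\bfcA}|,\bfe}$ is unchanged when one deletes the (lower-dimensional, or divisorial-but-contained-in-coordinate-hyperplanes) locus of $\Omega_{|Z_{\bfcA}|,\bfe}$ lying over $X_{\bfcA}\setminus\varphi_{\bfcA}(\T_M)$, and symmetrically that the incidence variety $\Omega_{\bfcA}$ on the torus side does not ``lose'' any component — this rests on $\Omega_{\bfcA}$ being irreducible (stated in the text) and on its being dense in the relevant component of $\Omega_{|Z_{\bfcA}|,\bfe}$. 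A clean way to organize this is to first prove the statement for the eliminant (where one only needs equality of Zariski closures of images, and the coordinate hyperplanes $V(x_{i,j})$ are irrelevant because the eliminant is irreducible and, being the equation of $\ov{\pi(\Omega_{\bfcA})}$, is not any $u_{i,j}$), and then deduce the resultant statement by comparing the exponents: by \eqref{eq:77} and the analogous factorization $\Res_{\bfe}(|Z_{\bfcA}|)=\pm\,\Elim_{\bfe}(|Z_{\bfcA}|)^{\deg(\rho|_{\Omega_{|Z_{\bfcA}|,\bfe}})}$ recalled after Definition~\ref{def:6}, it suffices to check $d_{\bfcA}=\deg(\rho|_{\Omega_{|Z_{\bfcA}|,\bfe}})\cdot(\text{multiplicity of }|Z_{\bfcA}|\text{ in }Z_{\bfcA})$, which again follows from the generic-injectivity/degree analysis of $\varphi_{\bfcA}$ together with the isomorphism $\Omega_{\bfcA}\xrightarrow{\sim}$ (open in) $\Omega_{X_{\bfcA},\bfe}$ over the torus.
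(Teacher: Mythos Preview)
Your approach is essentially correct, but it differs from the paper's proof in a meaningful way, and your writeup has a couple of internal inconsistencies worth flagging.

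The paper does not work directly with the open map $\varphi_{\bfcA}\colon\T_M\to\P^{\bfc}$ and then argue about what happens at the boundary. Instead, it chooses a fan $\Sigma$ refining $\Sigma_\Delta$, uses Lemma~\ref{lemm:11} to extend $\varphi_{\bfcA}$ to a \emph{proper} toric morphism $\Phi_{\bfcA}\colon X_\Sigma\to\P^{\bfc}$, pulls back the Cartier divisors $\div(L_i)$ to $X_\Sigma\times\P^{\bfc}$, and applies the projection formula~\eqref{eq:38} to $\Phi_{\bfcA}\times\Id_{\P^{\bfc}}$. This yields the equality of cycles $\pi_*\Omega_{\bfcA}=\rho_*\Omega_{Z_{\bfcA},(\bfe_0,\dots,\bfe_n)}$ directly, with all multiplicities handled automatically by the pushforward. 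The boundary analysis reduces to the single observation that no general linear form $L_i$ vanishes identically on any fiber $\{\xi\}\times\P^{\bfc}$, so no component of the intersection cycle on $X_\Sigma\times\P^{\bfc}$ lies over $X_\Sigma\setminus\T_M$. There is no case split on $\rank(L_{\bfcA})$ and no separate degree bookkeeping for the eliminant and the resultant.

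Your route---identify $\Omega_{\bfcA}$ with an open piece of $\Omega_{X_{\bfcA},\bfe}$ via the graph of $\varphi_{\bfcA}$, then compare degrees---also works, but be careful with two points. First, the sentence ``$\Omega_{\bfcA}$ maps isomorphically onto its image'' is only true when $L_{\bfcA}=M$; in general $(\varphi_{\bfcA}\times\Id)$ is finite of degree $[M:L_{\bfcA}]$ on $\Omega_{\bfcA}$, as you yourself note later. Second, and relatedly, the displayed equality $\pi_*\Omega_{\bfcA}=\rho_*\Omega_{|Z_{\bfcA}|,\bfe}$ is off by exactly this factor: the correct statement is $\pi_*\Omega_{\bfcA}=\rho_*\Omega_{Z_{\bfcA},\bfe}=[M:L_{\bfcA}]\cdot\rho_*\Omega_{|Z_{\bfcA}|,\bfe}$. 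Once you fix this, your exponent comparison in the last paragraph becomes unnecessary---the resultant identity follows immediately from the cycle equality, just as in the paper. The compactification buys you precisely the ability to skip the ad hoc ``boundary components don't matter because the equation isn't a coordinate $u_{i,j}$'' argument, which in your version is the weakest link (it is true that $\Omega_{X_{\bfcA},\bfe}$ is irreducible, so density of the torus piece suffices, but you should say that rather than invoke primitivity of a polynomial you have not yet identified).
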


\begin{proof} 
  Let $\bfx=\{x_{i,j}\}_{i,j}$ and $\bfu=\{u_{i,j}\}_{i,j}$
  respectively denote the homogeneous coordinates of the first and the
  second factor in the product $\P^{\bfc}\times \P^{\bfc}$,
  respectively. For $i=0,\dots, n$, consider the general linear form
  on $\P^{c_{i}}$ given by
\begin{equation}\label{eq:71}
L_{i}=\sum_{j=0}^{c_{i}} u_{i,j} x_{i,j} \in \Q[\bfu][\bfx_{i}].
\end{equation}

Let $\Sigma$ be a fan refining $\Sigma_{\Delta}$ and
$\Phi_{\bfcA}\colon X_{\Sigma}\to \P^{\bfc}$ the corresponding
morphism of proper toric varieties as in Lemma \ref{lemm:11}.
For each $i$,  set 
\begin{equation*}
  D_{i}=(\Phi_{\bfcA}\times
\Id_{\P^{\bfc}})^{*}(\div(L_{i}))\in \Div(X_{\Sigma}\times \P^{\bfc}). 
\end{equation*}
This is a Cartier divisor whose restriction to $\T_{M}\times
\P^{\bfc}$ coincides with $\div(F_{i})$ for the general Laurent
polynomial $F_{i}$ as in \eqref{eq:24}.

By Lemma \ref{lemm:11}, $Z_{\bfcA}\times\P^{\bfc}= (\Phi_{\bfcA}\times
\Id_{\P^{\bfc}})_{*}(X_{\Sigma}\times\P^{\bfc})$ and the family $\div(L_{i})$, $i=0,\dots, n$, intersects this cycle
properly. By the projection formula~\eqref{eq:38}, it follows that
\begin{displaymath}
(\Phi_{\bfcA}\times
\Id_{\P^{\bfc}})_{*}\Big((X_{\Sigma}\times\P^{\bfc})\cdot\prod_{i=0}^{n}D_{i}\Big)= (Z_{\bfcA}\times\P^{\bfc})\cdot
  \prod_{i=0}^{n} \div(L_{i}). 
\end{displaymath}
Let $\rho\colon \P^{\bfc}\times \P^{\bfc}\to \P^{\bfc}$ be the
projection onto the second factor as in \eqref{eq:90}. By the
functoriality of the direct image, $\pi_{*}= \rho_{*}\circ
(\Phi_{\bfcA}\times \Id_{\P^{\bfc}})_{*}$. Hence
\begin{equation} \label{eq:28}
  \pi_{*}\Big((X_{\Sigma}\times \P^{\bfc})\cdot\prod_{i=0}^{n}D_{i}\Big)= \rho_{*}\Big((Z_{\bfcA}\times\P^{\bfc})\cdot
  \prod_{i=0}^{n} \div(L_{i})\Big) =\rho_{*}\Omega_{Z_{\bfcA}, (\bfe_{0}, \dots, \bfe_{n})}
\end{equation}
for the incidence cycle $\Omega_{Z_{\bfcA}, (\bfe_{0}, \dots,
  \bfe_{n})}$ as in \eqref{eq:30}.

On the other hand, the general linear form $L_{i}$ does not vanish
identically on $\xi\times \P^{\bfc}$ for any $\xi\in
X_{\bfcA}$. Hence, the support of $D_{i}$ does not contain
$\zeta\times \P^{\bfc}$ for any $\zeta\in X_{\Sigma}$. This implies
that no component of the intersection cycle $(X_{\Sigma}\times
\P^{\bfc})\cdot\prod_{i=0}^{n}D_{i}$ is supported in
$(X_{\Sigma}\setminus \T_{M})\times \P^{\bfc}$. It follows that
\begin{equation}\label{eq:29}
  \pi_{*}\Big((X_{\Sigma}\times\P^{\bfc})\cdot\prod_{i=0}^{n}D_{i}\Big)= 
  \pi_{*}\Big((\T_{M}\times\P^{\bfc})\cdot\prod_{i=0}^{n}\div(F_{i})\Big) =\pi_{*}\Omega_{\bfcA}.
\end{equation}
From \eqref{eq:28} and \eqref{eq:29} we deduce the equality of cycles
$\rho_{*}\Omega_{Z_{\bfcA}, (\bfe_{0}, \dots,
  \bfe_{n})}=\pi_{*}\Omega_{\bfcA}$, which implies the statement
for the resultants and, {\it a fortiori},  for the eliminants.
\end{proof}

We devote the rest of this section to the study of the basic
properties of sparse eliminants and resultants.

\begin{proposition} \label{prop:5} Both the sparse eliminant and the sparse
  resultant are invariant, up to a sign, under permutations and
  translations of the supports.
\end{proposition}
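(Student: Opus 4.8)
The plan is to reduce both invariance properties to the geometric interpretation in Proposition~\ref{prop:1}, namely that $\Elim_{\bfcA}$ and $\Res_{\bfcA}$ are the eliminant and the resultant (with respect to the vectors $\bfe_{0},\dots,\bfe_{n}$) of the multiprojective toric variety $|Z_{\bfcA}|$ and the toric cycle $Z_{\bfcA}$, and then to track how these toric objects change under the two operations. Since eliminants and resultants of a fixed subvariety/cycle are intrinsic (well-defined up to a scalar in $\Z^{\times}=\{\pm1\}$ once the embedding is fixed), it suffices to show that permuting or translating the supports produces toric varieties/cycles that differ from the original ones by an automorphism of $\P^{\bfc}$ that is compatible with the resultant construction in the appropriate sense.

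First I would treat permutations. A permutation $\sigma$ of $\{0,\dots,n\}$ reorders the family $\bfcA=(\cA_{0},\dots,\cA_{n})$ into $\sigma\bfcA=(\cA_{\sigma(0)},\dots,\cA_{\sigma(n)})$, and correspondingly permutes the factors of $\P^{\bfc}$. The monomial map $\varphi_{\sigma\bfcA}$ is the composition of $\varphi_{\bfcA}$ with the coordinate-permutation isomorphism $\P^{\bfc}\to\P^{\sigma\bfc}$, so $Z_{\sigma\bfcA}$ and $|Z_{\sigma\bfcA}|$ are the images of $Z_{\bfcA}$ and $|Z_{\bfcA}|$ under that isomorphism. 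Now I invoke the general fact, recalled in \S\ref{sec:result-elim-mult} and proved in \cite{Remond2001:em, DKS2011:hvmsan}, that eliminants and resultants are invariant under index permutations; applied here this says precisely that the eliminant/resultant of the permuted cycle in the permuted multiprojective space equals, up to sign, the eliminant/resultant of the original cycle with the variable blocks $\bfu_{i}$ renamed accordingly. Hence $\Elim_{\sigma\bfcA}=\pm\Elim_{\bfcA}$ and $\Res_{\sigma\bfcA}=\pm\Res_{\bfcA}$ after the matching relabelling of the coefficient variables.

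Next, translations. Fix $b_{i}\in M$ for each $i$ and replace $\cA_{i}$ by $\cA_{i}+b_{i}=\{a_{i,j}+b_{i}\}_{j}$. The key observation is that translating a support does \emph{not} change the differences $a_{i,j}-a_{i,0}$, hence does not change the sublattices $L_{\cA_{i}}$, the polytopes up to translation, the fan $\Sigma_{\Delta}$, or the linear map $A\colon N\to\Z^{\bfc}$ of \eqref{eq:58}. Concretely, $\varphi_{\cA_{i}+b_{i}}(\xi)=(\chi^{a_{i,0}+b_{i}}(\xi):\dots:\chi^{a_{i,c_{i}}+b_{i}}(\xi))=\chi^{b_{i}}(\xi)\cdot(\chi^{a_{i,0}}(\xi):\dots:\chi^{a_{i,c_{i}}}(\xi))$, and the scalar $\chi^{b_{i}}(\xi)$ is absorbed by the homogeneity of projective coordinates, so $\varphi_{\cA_{i}+b_{i}}=\varphi_{\cA_{i}}$ as maps to $\P^{c_{i}}$. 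Therefore $\varphi_{\bfcA'}=\varphi_{\bfcA}$ for the translated family $\bfcA'$, whence $Z_{\bfcA'}=Z_{\bfcA}$ and $|Z_{\bfcA'}|=|Z_{\bfcA}|$ literally as cycles and subvarieties of the \emph{same} space $\P^{\bfc}$. Since the eliminant and resultant are attached to the cycle in $\P^{\bfc}$ together with the degree data $(\bfe_{0},\dots,\bfe_{n})$, which are unchanged, Proposition~\ref{prop:1} immediately gives $\Elim_{\bfcA'}=\pm\Elim_{\bfcA}$ and $\Res_{\bfcA'}=\pm\Res_{\bfcA}$.

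The only mild subtlety, and the step I would be most careful with, is bookkeeping of the coefficient variables: under a translation the variable sets $\bfu_{i}$ are in natural bijection with the translated supports, so the identification of $\Z[\bfu]$ for $\bfcA$ and for $\bfcA'$ is the obvious one indexing by the translate, and under a permutation one must relabel the blocks $\bfu_{i}\mapsto\bfu_{\sigma(i)}$ before comparing. With these identifications understood — which is exactly the sense in which the invariance statements for general resultants in \cite{Remond2001:em, DKS2011:hvmsan} are phrased — the equalities up to sign follow. I would phrase the final proof in two short paragraphs, one per operation, each ending by citing Proposition~\ref{prop:1} together with, respectively, the permutation-invariance of general resultants and the identity $\varphi_{\bfcA'}=\varphi_{\bfcA}$.
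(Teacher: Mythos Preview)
Your proposal is correct and follows essentially the same approach as the paper: for permutations you combine Proposition~\ref{prop:1} with the general index-permutation invariance of multiprojective eliminants and resultants from \cite{DKS2011:hvmsan}, and for translations you observe that the monomial map $\varphi_{\bfcA}$ (and hence the toric cycle $Z_{\bfcA}$) is unchanged because the common factor $\chi^{b_{i}}(\xi)$ is absorbed by projective homogeneity. The paper's proof is simply a terse version of yours, citing \cite[Proposition~1.27]{DKS2011:hvmsan} for the first part and stating the invariance of $\varphi_{\bfcA}$ for the second without spelling out the scalar-factor computation.
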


\begin{proof}
  The first statement follows directly from Proposition \ref{prop:1}
  and \cite[Proposition 1.27]{DKS2011:hvmsan}. The second claim
  is a consequence of the fact that the monomial map $\varphi_{\bfcA}$
  in~\eqref{eq:23} is invariant under translations of the supports.
\end{proof}

The following proposition gives the partial degrees of the sparse
resultant. It is the analogue of the well-known formula for the
partial degrees of the sparse eliminant given in \cite[Chapter 8,
Proposition 1.6]{GKZ94} under some hypothesis, and by \cite[Corollary
2.4]{PS93} in the general case.

 \begin{proposition} \label{prop:6}
For $i=0,\dots, n$, 
\begin{equation*}
  \deg_{\bfu_{i}}(\Res_{\bfcA})= \MV_{M}(\Delta_{0}, \dots,
  \Delta_{i-1},\Delta_{i+1},\dots, \Delta_{n}),
\end{equation*}
where $\Delta_{i}\subset M_{\R}$ denotes the convex hull of $\cA_{i}$
and $\MV_{M}$ is the mixed volume of convex bodies as in
\eqref{defmultivolume}.
\end{proposition}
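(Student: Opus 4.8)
The plan is to reduce the statement to a toric computation using Proposition~\ref{prop:1} together with the interpretation of the partial degrees of a resultant as mixed degrees of the underlying cycle, and then to identify the relevant mixed degree of $Z_{\bfcA}$ with a mixed volume via the correspondence between nef toric divisors and polytopes.

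First I would use Proposition~\ref{prop:1} to write $\Res_{\bfcA}=\pm\Res_{\bfe_{0},\dots,\bfe_{n}}(Z_{\bfcA})$, and then run the computation in the proof of Proposition~\ref{prop:2} with the cycle $Z_{\bfcA}$ on $\P^{\bfc}$ playing the role of $V$ and with $\bfd_{\ell}=\bfe_{\ell}$ for every $\ell$, so that $\sum_{j}d_{\ell,j}\theta_{j}=\theta_{\ell}$. This gives
\begin{displaymath}
  \deg_{\bfu_{i}}(\Res_{\bfcA})=\coeff_{\bftheta^{\bfc}}\Big([Z_{\bfcA}]\cdot\prod_{\ell\ne i}\theta_{\ell}\Big)=\deg_{\bfone-\bfe_{i}}(Z_{\bfcA}),
\end{displaymath}
where $\bfone-\bfe_{i}\in\N^{n+1}$ has all entries equal to $1$ except the $(i+1)$-th, which is $0$; this is read off from~\eqref{eq:53}, with the convention that the degree is $0$ whenever $c_{\ell}=0$ for some $\ell\ne i$ --- a case in which the asserted mixed volume also vanishes, since the mixed volume having a point among its arguments is $0$. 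Hence it suffices to prove that $\deg_{\bfone-\bfe_{i}}(Z_{\bfcA})$ equals $\MV_{M}(\Delta_{0},\dots,\Delta_{i-1},\Delta_{i+1},\dots,\Delta_{n})$.

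If $\rank(L_{\bfcA})<n$ then $Z_{\bfcA}=0$ by~\eqref{eq:39}, so both sides are $0$ (the set $\sum_{\ell\ne i}\Delta_{\ell}$ then lies in a translate of a proper subspace of $M_{\R}$). Assume then $\rank(L_{\bfcA})=n$, i.e.\ $\dim(\Delta)=n$, so that $\Sigma_{\Delta}$ is a fan; fix a fan $\Sigma$ refining $\Sigma_{\Delta}$, giving by Lemma~\ref{lemm:11} a morphism of proper toric varieties $\Phi_{\bfcA}\colon X_{\Sigma}\to\P^{\bfc}$ with $\dim(X_{\Sigma})=n$ and $Z_{\bfcA}=(\Phi_{\bfcA})_{*}X_{\Sigma}$. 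For $\ell\ne i$ let $H_{\ell}\in\Div(\P^{\bfc})$ be the inverse image under the $\ell$-th projection of a generic hyperplane of $\P^{c_{\ell}}$; then $\deg_{\bfone-\bfe_{i}}(Z_{\bfcA})$ is the degree of the $0$-cycle $Z_{\bfcA}\cdot\prod_{\ell\ne i}H_{\ell}$, which by the projection formula~\eqref{eq:38} equals $(\Phi_{\bfcA})_{*}\big(X_{\Sigma}\cdot\prod_{\ell\ne i}\Phi_{\bfcA}^{*}H_{\ell}\big)$. Since the push-forward of a $0$-cycle preserves its degree, $\deg_{\bfone-\bfe_{i}}(Z_{\bfcA})$ equals the intersection number on $X_{\Sigma}$ of the $n$ Cartier divisors $\Phi_{\bfcA}^{*}H_{\ell}$, $\ell\ne i$; working upstairs on the normal model $X_{\Sigma}$ rather than on the possibly non-normal $X_{\bfcA}$ is precisely what makes the degree of the monomial map $\varphi_{\bfcA}$ drop out of the count. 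Finally, $H_{\ell}$ is linearly equivalent to $\div(x_{\ell,0})$, so $\Phi_{\bfcA}^{*}H_{\ell}$ is linearly equivalent to the globally generated toric divisor $\Phi_{\bfcA}^{*}\div(x_{\ell,0})$, whose virtual support function is $h_{\Delta_{\ell}-a_{\ell,0}}$ exactly as in~\eqref{eq:62}; that is, it is the nef toric divisor attached to the lattice polytope $\Delta_{\ell}-a_{\ell,0}$. The classical formula expressing the intersection number of nef toric divisors on a complete $n$-dimensional toric variety as a mixed volume, with the $n!$-normalization fixed in~\eqref{defmultivolume} (see e.g.\ \cite[\S5.4]{Fulton:itv} or \cite{CLS11}), then gives $\deg\big(X_{\Sigma}\cdot\prod_{\ell\ne i}\Phi_{\bfcA}^{*}H_{\ell}\big)=\MV_{M}(\{\Delta_{\ell}-a_{\ell,0}\}_{\ell\ne i})=\MV_{M}(\Delta_{0},\dots,\Delta_{i-1},\Delta_{i+1},\dots,\Delta_{n})$ by translation invariance, which is the claim.

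The main obstacle is this last step: correctly identifying $\Phi_{\bfcA}^{*}$ of a generic hyperplane with the nef toric divisor of $\Delta_{\ell}$, and then invoking the toric intersection-number/mixed-volume dictionary with exactly the normalization used in~\eqref{defmultivolume}. The bookkeeping showing that the degree of $\varphi_{\bfcA}$ plays no role --- which is the reason for passing to $X_{\Sigma}$ and pushing forward a $0$-cycle --- also needs a little care, as does the treatment of the degenerate cases $\rank(L_{\bfcA})<n$ and $c_{\ell}=0$.
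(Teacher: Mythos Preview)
Your proof is correct and follows essentially the same route as the paper's: identify $\Res_{\bfcA}$ with $\Res_{\bfe_{0},\dots,\bfe_{n}}(Z_{\bfcA})$ via Proposition~\ref{prop:1}, interpret $\deg_{\bfu_{i}}$ as the degree of $Z_{\bfcA}\cdot\prod_{\ell\ne i}H_{\ell}$, and pull back through $\Phi_{\bfcA}\colon X_{\Sigma}\to\P^{\bfc}$ using the projection formula.

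The only genuine difference is the final identification. The paper observes that the restriction of $\Phi_{\bfcA}^{*}H_{\ell}$ to $\T_{M}$ is $\div(f_{\ell})$ for a generic Laurent polynomial $f_{\ell}$ with support $\cA_{\ell}$, argues by genericity that the intersection $X_{\Sigma}\cdot\prod_{\ell\ne i}\Phi_{\bfcA}^{*}H_{\ell}$ is supported on $\T_{M}$, and then invokes Bernstein's theorem~\eqref{eq:46} to count those roots. You instead pass to linear equivalence classes, recognise $\Phi_{\bfcA}^{*}\div(x_{\ell,0})$ as the nef toric divisor with support function $h_{\Delta_{\ell}-a_{\ell,0}}$ (exactly as in~\eqref{eq:62}), and appeal directly to the toric dictionary between intersection numbers of nef divisors and mixed volumes. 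The two endings are essentially equivalent --- Bernstein's theorem and the toric intersection formula are two faces of the same result --- but your version is slightly cleaner in that it never needs to argue that the intersection avoids the boundary, and it makes the handling of the degenerate cases ($\rank(L_{\bfcA})<n$, $c_{\ell}=0$) more explicit than the paper does.
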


\begin{proof}
  By Proposition \ref{prop:1} and
  \cite[Proposition~1.32]{DKS2011:hvmsan},
\begin{equation}\label{eq:35}
  \deg_{\bfu_{i}}(\Res_{\bfcA})=   \deg_{\bfu_{i}}(\Res_{\bfe_{0},
    \dots, \bfe_{n}}(Z_{\bfcA}))= \deg\Big(Z_{\bfcA}\cdot\prod_{j\ne i}H_{j} \Big),
\end{equation}
where $H_{j}\subset\P^{\bfc}$ is the inverse image under the
projection $\P^{\bfc}\to \P^{c_{j}}$ of a generic hyperplane of $\P^{c_{j}}$.

Let $\Sigma$ be a fan refining $\Sigma_{\Delta}$ and
$\Phi_{\bfcA}\colon X_{\Sigma}\to \P^{\bfc}$ the morphism of proper
toric varieties as in Lemma \ref{lemm:11}.  For $j=0,\dots, n$, set
\begin{displaymath}
D_{j}=(\Phi_{\bfcA})^{*} H_{j}\in \Div(X_{\Sigma}).  
\end{displaymath}
Observe that the restriction of $D_{j}$ to $\T_{M}$ coincides with the
Cartier divisor of a generic Laurent polynomial $f_{j}\in \C[M]$ with
support $\cA_{j}$.  By the projection formula~\eqref{eq:38},
\begin{equation}\label{eq:33}
Z_{\bfcA}\cdot\prod_{j\ne i}H_{j} =
(\Phi_{\bfcA})_{*}\Big(X_{\Sigma}\cdot \prod_{j\ne i}\div(D_{j})\Big).  
\end{equation}
Since the hyperplanes $H_{j}$ are generic, the cycle $X_{\Sigma}\cdot
\prod_{j\ne i}\div(D_{j})$ is supported on $\T_{M}$ and so
\begin{equation}\label{eq:34}
X_{\Sigma}\cdot \prod_{j\ne i}\div(D_{j})=   \T_{M}\cdot \prod_{j\ne i}\div(f_{j}). 
\end{equation}
By Bernstein's theorem~\eqref{eq:46}, the degree of the cycle in the
right-hand side of \eqref{eq:34}
coincides with the mixed volume $\MV_{M}(\Delta_{0}, \dots,
\Delta_{i-1},\Delta_{i+1},\dots, \Delta_{n})$. The statement then
follows from \eqref{eq:35}, \eqref{eq:33} and~\eqref{eq:34}.
\end{proof}

We recall here the notion of essential subfamily of supports
introduced by Sturmfels in~\cite{Stu94}. For $J\subset\{0,\dots, n\}$,
we set
\begin{equation*}
  L_{\bfcA_{J}}=\sum_{j\in J}L_{\cA_{j}}
\end{equation*}
with $L_{\cA_{j}}$ as in \eqref{eq:36}.

\begin{definition} 
  \label{def:1} Let $J\subset \{0,\ldots,n\}$. The subfamily
  $\bfcA_{J}=(\cA_j)_{j\in J}$ is \emph{essential} if the following
  conditions hold:
\begin{enumerate}
 \item \label{item:14} $\#J=\rank\big(L_{\bfcA_J}\big)+1;$
\item \label{item:15} $\#J'\leq \rank\big(L_{\bfcA_J'}\big)$ for all $J'\subsetneq J$.
\end{enumerate}
\end{definition}

\begin{remark}\label{rem:1}
  When $J=\emptyset$, we have that $L_{\bfcA_{J}}=0$ and so $\#J=
  \rank\big(L_{\bfcA_J'}\big)=0$. In particular, if $\bfcA_{J}$ is an
  essential subfamily, then $J\ne\emptyset$.  On the other extreme,
  when the family $\bfcA$ is essential, $\bfcA_{J}$ is essential if
  and only if $J=\{0,\dots, n\}$.
\end{remark}

\begin{lemma}
  \label{lemm:6} Let $I\subset \{0,\dots, n\}$ 
  such that $ \rank(L_{\bfcA_{I}}) <\# I$.
  Then there exists $J\subset I$ such that $\bfcA_{J}$ is essential.
\end{lemma}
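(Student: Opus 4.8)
The plan is to prove Lemma \ref{lemm:6} by induction on $\# I$, peeling off indices until the two defining conditions of an essential subfamily in Definition \ref{def:1} are met.

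\medskip

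First I would set up the base of the induction. If $\# I = 1$, say $I = \{i\}$, then the hypothesis $\rank(L_{\bfcA_I}) < \# I = 1$ forces $\rank(L_{\bfcA_I}) = 0$, i.e. $L_{\cA_i} = 0$, which means $\cA_i$ is a single point (a translate of $\{0\}$). Then $J = I$ itself works: condition \eqref{item:14} reads $1 = 0 + 1$, and condition \eqref{item:15} is vacuous since the only proper subset is $J' = \emptyset$, for which $\# J' = 0 \le \rank(L_{\bfcA_\emptyset}) = 0$ holds (using Remark \ref{rem:1}). Actually, one should be slightly careful: $\#\emptyset = 0 = \rank(L_{\bfcA_\emptyset})$, so the inequality in \eqref{item:15} is satisfied, and $\bfcA_{\{i\}}$ is essential.

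\medskip

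For the inductive step, assume the statement holds for all index sets of cardinality less than $\# I$, and let $I$ satisfy $\rank(L_{\bfcA_I}) < \# I$. Consider two cases. \emph{Case 1: condition \eqref{item:15} already holds for $I$}, i.e. $\# I' \le \rank(L_{\bfcA_{I'}})$ for every proper subset $I' \subsetneq I$. Taking $I'$ to be a subset obtained by removing a single index, one gets $\# I - 1 \le \rank(L_{\bfcA_{I'}}) \le \rank(L_{\bfcA_I})$; combined with $\rank(L_{\bfcA_I}) < \# I = (\# I - 1) + 1$, this pins down $\rank(L_{\bfcA_I}) = \# I - 1$, so condition \eqref{item:14} holds too, and we may take $J = I$. \emph{Case 2: condition \eqref{item:15} fails for $I$}, i.e. there exists a proper subset $I' \subsetneq I$ with $\# I' > \rank(L_{\bfcA_{I'}})$, equivalently $\rank(L_{\bfcA_{I'}}) < \# I'$. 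Since $\# I' < \# I$, the inductive hypothesis applies to $I'$ and yields $J \subset I' \subset I$ with $\bfcA_J$ essential, completing the induction.

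\medskip

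The argument is essentially a routine two-case induction, and I do not anticipate a serious obstacle; the only point demanding a little care is the bookkeeping in Case 1 — making sure that ``condition \eqref{item:15} holds for $I$'' together with the standing hypothesis $\rank(L_{\bfcA_I}) < \# I$ really does force the equality $\#I = \rank(L_{\bfcA_I}) + 1$ of condition \eqref{item:14}, which uses the monotonicity $\rank(L_{\bfcA_{I'}}) \le \rank(L_{\bfcA_I})$ for $I' \subset I$ coming directly from $L_{\bfcA_{I'}} \subset L_{\bfcA_I}$. One should also note at the outset that the hypothesis $\rank(L_{\bfcA_I}) < \#I$ guarantees in particular $I \ne \emptyset$, consistent with Remark \ref{rem:1}.
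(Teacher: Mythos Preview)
Your proof is correct and is essentially the same argument as the paper's, just packaged inductively: the paper picks a subset $J\subset I$ that is \emph{minimal} with respect to inclusion subject to $\rank(L_{\bfcA_J})<\#J$, and then verifies that this $J$ is essential via exactly the monotonicity computation you carry out in Case~1. Your inductive descent in Case~2 is precisely what guarantees that such a minimal $J$ exists, so the two proofs are equivalent reformulations of one another.
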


\begin{proof}
  Choose a  subset $J\subset I$ which is minimal with respect
  to the inclusion, under the condition that $\rank(L_{\bfcA_{J}}) <\#
  J$. Such a minimal subset exists because of the hypothesis
  that $ \rank(L_{\bfcA_{I}}) <\# I$. We have that
  $\rank(L_{\bfcA_{J'}}) \ge \#I$ for all $J'\subsetneq J$, and the
  minimality of $J$ implies that $\rank(L_{\bfcA_{J}}) =\# J-1$. Hence,
  $J$ is essential.
\end{proof}

The notion of essential subfamily gives a combinatorial
criterion to decide when $\Res_{\bfcA}\ne 1$ and, in that case, to
determine which are the sets of variables that actually appear in
the sparse eliminant and the sparse resultant.

 \begin{proposition} Let notation be as above. 
   \label{prop:3} 
   \begin{enumerate}
   \item \label{item:16} The following conditions are equivalent: 
     \begin{enumerate}
     \item \label{item:12}   $\Elim_{\bfcA}\neq1$; 
\item \label{item:19}  $\Res_{\bfcA}\neq1$;
\item \label{item:3} $ \rank(L_{\bfcA_{I}})\ge \#I -1$ for all $I\subset\{0,\dots, n\}$; 
\item \label{item:20}   there exists a unique  essential subfamily of $\bfcA$.
     \end{enumerate}
   \item \label{item:17} Suppose that $\Elim_{\bfcA}\ne 1$ or
     equivalently, that $\Res_{\bfcA}\ne 1$, and let $\bfcA_{J}$ be the
     unique essential subfamily of $\bfcA$. Then the following
     conditions are equivalent:
     \begin{enumerate}
     \item \label{item:21} $\deg_{\bfu_{i}}(\Elim_{\bfcA})>0$;
\item \label{item:22}  $\deg_{\bfu_{i}}(\Res_{\bfcA})>0$;
\item \label{item:23} $i\in J$.
     \end{enumerate}
   \end{enumerate}
\end{proposition}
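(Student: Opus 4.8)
The plan is to translate everything into the language of the multiprojective toric cycle $Z_{\bfcA}$ via Proposition \ref{prop:1}, and then reduce to known statements about eliminants and resultants of cycles together with the dimension formula \eqref{eq:39}. For part \eqref{item:16}, the equivalence of \eqref{item:12} and \eqref{item:19} is immediate from \eqref{eq:77}: $\Res_{\bfcA}=\pm\Elim_{\bfcA}^{d_{\bfcA}}$, so one is nonconstant if and only if the other is (note $d_{\bfcA}\ge 1$ always, since the incidence variety $\Omega_{\bfcA}$ is nonempty and dominates $\T_M$). For the equivalence with \eqref{item:3}: by Proposition \ref{prop:1}, $\Res_{\bfcA}\ne 1$ iff $\rho_*\Omega_{Z_{\bfcA},(\bfe_0,\dots,\bfe_n)}$ is a hypersurface in $\P^{\bfc}$. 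The key point is that $\pi\colon\Omega_{\bfcA}\to\P^{\bfc}$ has a hypersurface as image precisely when $\dim\Omega_{\bfcA}=\dim\P^{\bfc}-1$, i.e. when the projection from the incidence variety is generically finite onto its image rather than dropping dimension; and by a standard fiber-dimension count this happens iff for every $i$ the generic fiber condition holds, which combinatorially is the statement that no subfamily is ``overdetermined''. More precisely, I would invoke the equivalence \eqref{item:3}$\Leftrightarrow$\eqref{item:20}, which is a purely combinatorial lemma about the rank function $I\mapsto\rank(L_{\bfcA_I})$: condition \eqref{item:3} says $\#I-\rank(L_{\bfcA_I})\le 1$ for all $I$, and one shows by a matroid-style argument (using Lemma \ref{lemm:6} and the submodularity of $I\mapsto\rank(L_{\bfcA_I})$) that this holds iff there is a unique minimal $J$ with $\#J=\rank(L_{\bfcA_J})+1$, equivalently a unique essential subfamily. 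Then \eqref{item:3}$\Leftrightarrow$\eqref{item:19} can be obtained by combining the partial-degree formula of Proposition \ref{prop:6} with the observation that $\Res_{\bfcA}=1$ iff all its partial degrees vanish iff all the mixed volumes $\MV_M(\Delta_0,\dots,\widehat{\Delta_i},\dots,\Delta_n)$ vanish; by the monotonicity and linearity of mixed volumes, this mixed volume vanishes iff the Minkowski sum $\sum_{j\ne i}\Delta_j$ fails to have full dimension $n$, i.e. $\rank(L_{\bfcA_{\{0,\dots,n\}\setminus\{i\}}})<n$. Reconciling ``all these ranks are $<n$'' with the failure of \eqref{item:3} is the heart of the argument.

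For part \eqref{item:17}, assume $\bfcA$ has a unique essential subfamily $\bfcA_J$. By \eqref{eq:77}, $\deg_{\bfu_i}(\Elim_{\bfcA})>0$ iff $\deg_{\bfu_i}(\Res_{\bfcA})>0$, so \eqref{item:21}$\Leftrightarrow$\eqref{item:22} is free. It remains to show $\deg_{\bfu_i}(\Res_{\bfcA})>0 \Leftrightarrow i\in J$. By Proposition \ref{prop:6}, $\deg_{\bfu_i}(\Res_{\bfcA})=\MV_M(\Delta_0,\dots,\widehat{\Delta_i},\dots,\Delta_n)$, and this mixed volume is positive iff one can pick segments $S_j\subset\Delta_j-a_{j,0}$ ($j\ne i$) spanning $M_\R$, i.e. iff $\rank\big(L_{\bfcA_{\{0,\dots,n\}\setminus\{i\}}}\big)=n$. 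So I must show: $\rank(L_{\bfcA_{\{0,\dots,n\}\setminus\{i\}}})=n$ iff $i\in J$. Using the characterization of essentiality: since $\bfcA_J$ is essential, $\rank(L_{\bfcA_J})=\#J-1$, and minimality \eqref{item:15} forces $\bfcA_{J\setminus\{i\}}$ to have $\rank = \#J-1$ for each $i\in J$, i.e. removing an index of $J$ does not drop the rank of that sublattice. Combined with the fact that under hypothesis \eqref{item:3} we have $\rank(L_{\bfcA_{\{0,\dots,n\}}})=n$ (which should be derived as part of \eqref{item:16}, since $\Res_{\bfcA}\ne1$ forces $Z_{\bfcA}\ne0$ hence $\rank(L_{\bfcA})=n$ by \eqref{eq:39} — actually this needs care, see below), one shows that dropping $i\in J$ keeps full rank, while dropping $i\notin J$ necessarily loses the contribution that only $\cA_i$ could supply (because $J$ is the \emph{unique} essential subfamily, so $\cA_i$ for $i\notin J$ is not ``redundant'' in the complementary sense).

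The main obstacle I anticipate is getting the combinatorial bookkeeping around the rank function exactly right — in particular, proving \eqref{item:3}$\Leftrightarrow$\eqref{item:20} cleanly, and correctly handling the subtle point that $\Res_{\bfcA}\ne1$ does \emph{not} by itself imply $\rank(L_{\bfcA})=n$ (it implies $\rank(L_{\bfcA_I})\ge\#I-1$ for all $I$, and in particular for $I=\{0,\dots,n\}$ this gives $\rank(L_{\bfcA})\ge n$, hence $=n$ since the rank is at most $n$ — so actually it is fine, but one must say it). The other delicate piece is justifying that $\deg_{\bfu_i}(\Res_{\bfcA})=0 \Leftrightarrow \MV_M(\dots)=0 \Leftrightarrow \rank(L_{\bfcA_{\{0,\dots,n\}\setminus\{i\}}})<n$: the first equivalence is immediate from Proposition \ref{prop:6}, and the second is the standard fact that a mixed volume of lattice polytopes (or compact convex bodies) $\MV_M(Q_1,\dots,Q_n)$ is positive iff there exist segments $S_k\subseteq Q_k$ whose directions span $M_\R$, equivalently iff $\dim(Q_1+\dots+Q_n)=n$; I would cite \cite[\S7.4]{CLO05} or \cite[Chapter 5]{Schneider:cbbmt} for this. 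Once these two facts are in place, both parts of the proposition follow by tracking which sublattices $L_{\bfcA_I}$ have full rank, a purely combinatorial matter governed by the uniqueness of the essential subfamily.
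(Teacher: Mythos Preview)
Your overall plan is sound and largely parallels the paper: the equivalences \eqref{item:12}$\Leftrightarrow$\eqref{item:19} and \eqref{item:21}$\Leftrightarrow$\eqref{item:22} are indeed immediate from \eqref{eq:77}, and the combinatorial argument you sketch for \eqref{item:3}$\Leftrightarrow$\eqref{item:20} via Lemma \ref{lemm:6} and submodularity of $I\mapsto\rank(L_{\bfcA_I})$ is essentially the paper's own proof. For \eqref{item:12}$\Leftrightarrow$\eqref{item:3} the paper takes a slightly different route, invoking \cite[Lemmas 1.34 and 1.37(2)]{DKS2011:hvmsan} to get the criterion $\dim(\pr_I(|Z_{\bfcA}|))\ge\#I-1$ for all $I$, then translating via \eqref{eq:39}; your alternative through Proposition \ref{prop:6} and mixed volumes is legitimate, but only once the error below is fixed.

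The genuine gap is your characterization of mixed-volume positivity. You assert that $\MV_M(Q_1,\dots,Q_n)>0$ iff $\dim(Q_1+\cdots+Q_n)=n$, equivalently $\rank(L_{\bfcA_{\{0,\dots,n\}\setminus\{i\}}})=n$. This is false. The correct criterion (this is \cite[Theorem 5.1.7]{Schneider:cbbmt}, which the paper cites) is that $\MV_M(Q_1,\dots,Q_n)>0$ iff $\dim\big(\sum_{j\in I}Q_j\big)\ge\#I$ for \emph{every} nonempty $I\subset\{1,\dots,n\}$. Your segment formulation is right, but the step ``equivalently iff $\dim(Q_1+\cdots+Q_n)=n$'' is not. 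Concretely: take $n=3$, $\cA_0=\{0\}$ and $\cA_1=\cA_2=\cA_3=\{0,e_1,e_2,e_3\}$; the unique essential subfamily is $J=\{0\}$, and for $i=1\notin J$ one has $\rank(L_{\bfcA_{\{0,2,3\}}})=3=n$, yet $\MV_M(\Delta_0,\Delta_2,\Delta_3)=0$ since $\Delta_0$ is a point. So your intermediate claim ``$\rank(L_{\bfcA_{\{0,\dots,n\}\setminus\{i\}}})=n$ iff $i\in J$'' is wrong. With the correct criterion the argument does go through, exactly as in the paper: if $i\notin J$ then $J\subset\{0,\dots,n\}\setminus\{i\}$ has $\rank(L_{\bfcA_J})=\#J-1<\#J$, forcing the mixed volume to vanish; if $i\in J$ then by uniqueness no essential subfamily lies in $\{0,\dots,n\}\setminus\{i\}$, so Lemma \ref{lemm:6} gives $\rank(L_{\bfcA_I})\ge\#I$ for all such $I$, and the mixed volume is positive.
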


\begin{proof}
  We first prove \eqref{item:16}. The equivalence between
  \eqref{item:12} and \eqref{item:19} follows directly from~\eqref{eq:77}.

By Proposition \ref{prop:1}, we have
  that $\Elim_{\bfcA}\neq1$ if and only if $\Elim_{\bfe_{0},\dots,
    \bfe_{n}}(|Z_{\bfcA}|)\ne 1$.  By \cite[Lemmas 1.34 and
  1.37(2)]{DKS2011:hvmsan}, this is equivalent to 
\begin{equation}\label{eq:68}
\dim(\pr_{I}(|Z_{\bfcA}|))\ge \#I - 1 \quad \text{ for all } I\subset\{0,\dots, n\},
\end{equation}
where $\pr_{I}$ denotes the projection $\prod_{i=0}^{n}\P^{c_{i}}
\rightarrow \prod_{i\in I}\P^{c_{i}}$. 
We claim that this condition is equivalent to \eqref{item:3}.

To prove this, suppose that \eqref{eq:68} holds. In particular,
$\dim(|Z_{\bfcA}|)=n$ and so $|Z_{\bfcA}|=X_{\bfcA}$.  Hence,
$\pr_{I}(|Z_{\bfcA}|)=\pr_{I}(X_{\bfcA})= X_{\bfcA_{I}}$. Applying
\eqref{eq:39}, we deduce that
$\dim(\pr_{I}(|Z_{\bfcA}|))=\rank(L_{\bfcA_{I}})$ and so
\eqref{item:3} follows. Conversely, suppose that \eqref{item:3}
holds. In particular, $ \rank(L_{\bfcA})=n$. By \eqref{eq:39}, this
implies that 
$\dim(X_{\bfcA})=n$ and so $|Z_{\bfcA}|=X_{\bfcA}$. Hence
$\dim(\pr_{I}(|Z_{\bfcA}|))=\dim(\pr_{I}(X_{\bfcA}))=\rank(L_{\bfcA_{I}})
\ge \#I-1$, and \eqref{eq:68} follows.

 
We now show the equivalence of \eqref{item:3} and the existence of a
unique essential subfamily of supports.  First, assume that
\eqref{item:3} holds.  Lemma \ref{lemm:6} applied to the subset
$I=\{0,\dots, n\}$ shows that there exists at least one essential
subfamily~$\bfcA_{J}$. Suppose that there exist a further essential
subfamily $\bfcA_{J'}$. Then
  \begin{displaymath}
    L_{\bfcA_{J\cup
        J'}}=L_{\bfcA_{J}}+L_{\bfcA_{J'}} \quad \text{ and }  \quad L_{\bfcA_{J\cap
        J'}}\subset L_{\bfcA_{J}}\cap L_{\bfcA_{J'}}.    
  \end{displaymath}
  We deduce that
  \begin{multline} \label{eq:4}
    \rank(L_{\bfcA_{J\cup J'}}) \le     \rank(L_{\bfcA_{J}}) + 
   \rank(L_{\bfcA_{J'}}) -\rank(L_{\bfcA_{J\cap J'}})  \\ 
\le \# J -1+\#J' - 1 -\#(J\cap J')
= \#(J\cup J')-2,
  \end{multline}
  since both $\bfcA_{J}$ and $\bfcA_{J'}$ are essential and
  $\bfcA_{J\cap J'}$ is a proper subfamily of them. The inequality
  \eqref{eq:4} contradicts \eqref{item:3}, showing that there is a
  unique essential subfamily.

  Conversely, suppose that \eqref{item:3} does not hold. Then, there
  exists a subset $I_{0}\subset\{0,\dots, n\}$ such that
\begin{math}
  \rank(L_{\bfcA_{I_{0}}})\le \#I -2.
\end{math}
By Lemma \ref{lemm:6}, there exists  $J\subset I_{0}$
such that $\bfcA_{J}$ is essential. Choose $i_{0}\in J$. Then
\begin{math}
  \rank(L_{\bfcA_{I_{0}\setminus \{i_{0}\}}})\le \#(I_{0}\setminus
  \{i_{0}\})-1.
\end{math}
Again, Lemma \ref{lemm:6} implies that there exists an essential
subfamily of support $\bfcA_{J'}$ with $J'\subset I_{0}\setminus
\{i_{0}\}$. By construction, the essential subfamilies  $\bfcA_{J}$
and $\bfcA_{J'}$ are different, concluding the proof of
\eqref{item:16}.

We now turn to the proof of \eqref{item:17}.  Suppose that
$\Elim_{\bfcA}\ne 1$ or $\Res_{\bfcA}\ne 1$ and let $\bfcA_{J}$ denote
the unique essential subfamily. The equivalence between
\eqref{item:21} and \eqref{item:22} follows again from \eqref{eq:77}. 

Choose $i\notin J$. Then $J\subset \{0,\dots, n\}\setminus \{i\}$ and
$\rank(L_{\bfcA_{J}})= \# J -1$. By \cite[Theorem
5.1.7]{Schneider:cbbmt}, we have that
  \begin{math}
    \MV_{M}(\Delta_{0}, \dots,
  \Delta_{i-1},\Delta_{i+1},\dots, \Delta_{n})=0.
  \end{math}

  Now let $i\in J$. There is no essential subfamily of supports
  $\bfcA_{J'}$ with $J'\not\ni i$. Lemma~\ref{lemm:6} then implies
  that $\rank(L_{\bfcA_{I}})\ge \# I $ for all $I\subset \{0,\dots,
  n\}\setminus \{i\}$. Applying again
  \cite[Theorem~5.1.7]{Schneider:cbbmt}, we deduce that
  $\MV_{M}(\Delta_{0}, \dots, \Delta_{i-1},\Delta_{i+1},\dots,
  \Delta_{n})>0$, as stated.
\end{proof}

Given a family of Laurent polynomials $f_{i}\in \C[M]$ with support
contained in $\cA_{i}$, $i=0,\dots, n$, we denote by
\begin{displaymath}
\Elim_{\bfcA}(f_{0},\dots, f_{r}),\quad \Res_{\bfcA}(f_{0},\dots,
f_{r}) \quad \in \C
\end{displaymath}
the evaluation of the sparse eliminant and the sparse resultant,
respectively, at the coefficients of the $f_{i}$'s.

Typically, the fact that the family of Laurent
polynomials has a common root in the torus implies the vanishing of
the sparse eliminant and of the sparse resultant. In precise terms, if
$\ov{\pi(\Omega_{\bfcA})}$ is a hypersurface,
\begin{equation}
  \label{eq:70}
V(f_{0},\dots,f_{n})\ne\emptyset \Longrightarrow \Res_{\bfcA}(f_{0},\dots, f_{r})=0,
\end{equation}
and a similar statement holds for the sparse eliminant.  In Lemma
\ref{lemm:7} below, we give sufficient conditions such that the
vanishing of the sparse eliminant at a given family of Laurent
polynomials implies the existence of a common root in the torus.

\begin{lemma}
\label{lemm:7} Let 
\begin{equation} \label{eq:3}
  \bff=(f_{0},\dots, f_{n})\in V(\Elim_{\bfcA})\setminus \bigcup_{i=0}^{n}\bigcup_{j=0}^{c_{i}}
  V\bigg(\frac{\partial\Elim_{\bfcA}}{\partial u_{i,j}}\bigg) \subset\P^{\bfc}.
\end{equation}
Then, $V(\bff) \ne \emptyset$ and,  for all $\xi\in V(\bff)$, 
\begin{equation} \label{eq:16}
  (\chi^{a_{i,j}}(\xi))_{0\le i\le n, 0\le j\le c_{i}}=
  \bigg(\frac{\partial\Elim_{\bfcA}}{\partial
    u_{i,j}}(\bff)\bigg)_{0\le i\le n, 0\le j\le c_{i}}\in \P^{\bfc}.
\end{equation}
\end{lemma}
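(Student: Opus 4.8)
The plan is to realize $\Elim_{\bfcA}$ as the eliminant of the toric variety $|Z_{\bfcA}| = X_{\bfcA}$ via Proposition~\ref{prop:1}, and then read off the statement from the geometry of the incidence variety together with the "gradient = point on the variety" phenomenon that is classical for Chow forms of projective varieties. First I would observe that the hypothesis~\eqref{eq:3} forces $\Elim_{\bfcA}\ne 1$, so by Proposition~\ref{prop:3} there is a well-defined eliminant; moreover, since the gradient of $\Elim_{\bfcA}$ is nonzero at $\bff$, the point $\bff$ is a smooth point of the hypersurface $V(\Elim_{\bfcA})=\ov{\pi(\Omega_{\bfcA})}$. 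The key is to analyze the fiber of $\pi\colon\Omega_{\bfcA}\to \P^{\bfc}$ over $\bff$. Because $\pi$ restricted to $\Omega_{\bfcA}$ is generically finite onto its image, over a dense open subset of $V(\Elim_{\bfcA})$ the fiber consists of $\chi$-images of torus points $\xi$ with $F_i(\bfu_i,\xi)=0$ for the coefficient vector $\bff$. The condition $\partial\Elim_{\bfcA}/\partial u_{i,j}(\bff)\ne 0$ for some $(i,j)$ ensures first that $V(\bff)\ne\emptyset$: if it were empty, then by~\eqref{eq:70} (contrapositive) together with a Nullstellensatz-type argument, $\bff$ could not lie on the eliminant hypersurface in a way compatible with a nonzero gradient — more precisely, I would argue that the points of $V(\Elim_{\bfcA})$ with all partials vanishing contain the locus where the fiber of $\pi$ is empty or positive-dimensional, which is a proper closed subset.

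The heart of the proof is formula~\eqref{eq:16}. Here I would invoke the standard description of the derivatives of an eliminant (Chow form) in terms of the coordinates of the common zero. Concretely: write $L_i=\sum_j u_{i,j}x_{i,j}$ as in~\eqref{eq:71}, and recall from Proposition~\ref{prop:1} that $\Elim_{\bfcA}=\pm\Elim_{\bfe_0,\dots,\bfe_n}(X_{\bfcA})$, the multiprojective Chow form of the toric variety $X_{\bfcA}\subset\P^{\bfc}$. For such Chow forms there is a classical formula (see e.g.\ \cite[\S3.2]{DKS2011:hvmsan} or the theory in \cite{GKZ94, Remond2001:em}): if $\bff=(f_0,\dots,f_n)$ is a smooth point of the Chow hypersurface with $V(\bff)=\{\xi\}$ a single reduced point on $X_{\bfcA}$, then
\begin{displaymath}
  \frac{\partial\Elim_{\bfcA}}{\partial u_{i,j}}(\bff)=\lambda\cdot x_{i,j}(\xi)
\end{displaymath}
for a common nonzero scalar $\lambda$ independent of $(i,j)$ — that is, the gradient of the Chow form recovers, projectively, the homogeneous coordinates of the unique common zero. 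For a point $\xi\in X_{\bfcA}$ in the image of the monomial map $\varphi_{\bfcA}$, Lemma~\ref{lemm:1} identifies $X_{\bfcA}$ off the coordinate hyperplanes with $\varphi_{\bfcA}(\T_M)$, so the homogeneous coordinate $x_{i,j}(\xi)$ is exactly $\chi^{a_{i,j}}(\xi)$, up to a simultaneous rescaling within each block that disappears projectively; this is precisely~\eqref{eq:16}. The case of several common zeros or a single multiple zero is excluded by the smoothness hypothesis: at a point of the Chow hypersurface where the fiber has more than one reduced point or is nonreduced, the hypersurface is singular, hence the gradient vanishes there, contradicting~\eqref{eq:3}.

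The main obstacle I anticipate is making rigorous the passage from "smooth point of the Chow form" to "the fiber is a single reduced point and the gradient equals its coordinates". In the classical projective setting this is worked out carefully (it is essentially the statement that the Chow form of a variety, restricted to its smooth locus, is a resolvent whose partial derivatives give the solution), but here one must do it for the \emph{multiprojective} Chow form of a possibly non-normal toric variety, and keep track of the fact that we pull back along $\varphi_{\bfcA}$ which need not be an embedding — though Lemma~\ref{lemm:1} says it is injective on the torus, which is all we need since $V(\bff)\subset X_{\bfcA}\setminus\bigcup V(x_{i,j})$. A secondary technical point is verifying $V(\bff)\neq\emptyset$ under hypothesis~\eqref{eq:3}: I would handle this by noting that the image $\pi(\Omega_{\bfcA})$ is constructible and dense in the hypersurface $V(\Elim_{\bfcA})$, so its complement in the hypersurface is a proper closed subset $Y$; I then claim $Y$ is contained in the common zero locus of the partials $\partial\Elim_{\bfcA}/\partial u_{i,j}$, because along $Y$ the Chow form has a zero of multiplicity $\geq 2$ transversally (the fiber "degenerates"), which again forces the gradient to vanish. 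Establishing this inclusion cleanly — rather than just for generic points — is the delicate part, and I would lean on the explicit resultant-derivative formulas in \cite{Remond2001:em, DKS2011:hvmsan} to close it.
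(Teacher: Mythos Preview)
Your overall strategy---identify $\Elim_{\bfcA}$ with the multiprojective Chow form of $X_{\bfcA}$ via Proposition~\ref{prop:1}, then use the ``gradient recovers the point'' phenomenon together with Lemma~\ref{lemm:1}---is the same as the paper's. However, your logical organization differs from the paper's in a way that creates gaps.

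First, you read hypothesis~\eqref{eq:3} as ``$\bff$ is a smooth point of $V(\Elim_{\bfcA})$'', but it is much stronger: it says \emph{every} partial $\partial\Elim_{\bfcA}/\partial u_{i,j}$ is nonzero at $\bff$, not just one. This matters for what follows. Second, you treat $V(\bff)\ne\emptyset$ and formula~\eqref{eq:16} as two separate tasks, proposing for the first a constructibility argument (``fiber-empty locus lies in the singular locus''). That claim is stated too strongly: a boundary common zero on $X_{\bfcA}\setminus\varphi_{\bfcA}(\T_M)$ only forces \emph{some} partial to vanish (the one corresponding to a zero coordinate), not all of them. What actually saves you is precisely the strong form of~\eqref{eq:3}, but your write-up does not make this connection.

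The paper's proof unifies both parts. Since $\bff\in V(\Elim_{\bfcA})$, by~\eqref{eq:56} there is automatically a common zero $\bfzeta\in X_{\bfcA}\cap V(\ell_0,\dots,\ell_n)$---a~priori possibly on the boundary. One then takes a Nullstellensatz-type membership $(\prod_j u_{j,l_j}^{\kappa})\,\Elim_{\bfcA}\in(L_0,\dots,L_n)$ modulo $I(X_{\bfcA})$ (this is \cite[Proposition~1.37]{DKS2011:hvmsan}), differentiates it in $u_{i,j}$, and evaluates at $(\bfzeta,\bff)$ to obtain $\partial\Elim_{\bfcA}/\partial u_{i,j}(\bff)=G_i(\bff,\bfzeta)\,\zeta_{i,j}$. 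This single computation gives~\eqref{eq:16} directly \emph{and} forces every $\zeta_{i,j}\ne 0$ by~\eqref{eq:3}, so Lemma~\ref{lemm:1} places $\bfzeta$ in $\varphi_{\bfcA}(\T_M)$ and hence $V(\bff)\ne\emptyset$. No uniqueness of $\xi$ is needed or claimed: the lemma asserts~\eqref{eq:16} for \emph{every} $\xi\in V(\bff)$, and indeed $\varphi_{\bfcA}$ may fail to be injective when $L_{\bfcA}\ne M$, so your detour through ``single reduced point'' is unnecessary and potentially misleading. The moral is that the gradient formula should be proven first, by explicit differentiation of the Nullstellensatz identity, and then everything else---nonemptiness of $V(\bff)$, the torus location of the common zero, the formula for every $\xi$---follows at once.
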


\begin{proof} 
  If $\Elim_{\bfcA}=1$, then $V(\Elim_{\bfcA})=\emptyset$ and the
  statement is trivially verified. Hence, we suppose that
  $\Elim_{\bfcA}\ne 1$. 

  By Proposition \ref{prop:3}\eqref{item:16} and \eqref{eq:39},
  $\dim(X_{\bfcA})=n$. Hence, by the definition of the  toric cycle
  $Z_{\bfcA}$ in \eqref{eq:27}, it
  follows that $|Z_{\bfcA}|=X_{\bfcA}$. By Proposition \ref{prop:1},
  $\Elim_{\bfcA}=\Elim_{\bfe_{0},\dots, \bfe_{n}}({X_{\bfcA}})$ and so
  $ \ov{\pi(\Omega_{\bfcA})} ={\rho(\Omega_{X_{\bfcA}, (\bfe_{0},
      \dots, \bfe_{n})})}$.  In particular, the latter is a
  hypersurface that contains the point $\bff$.  By \eqref{eq:56},
  \begin{equation}\label{eq:69}
X_{\bfcA}\cap V(\ell_{0}, \dots, \ell_{n}) \ne \emptyset,
  \end{equation}
  where $\ell_{i}$ denotes the linear form on $\P^{c_{i}}$ associated
  to $f_{i}$ {\it via} the monomial map $\varphi_{\bfcA}$ given in
  \eqref{eq:23}.

  Take a point $\bfzeta\in X_{\bfcA}\cap V(\ell_{0}, \dots, \ell_{n})$
  and, for $j=0,\dots, n$, choose $0\le l_{j}\le c_{j}$ such that
  $\zeta_{j,l_{j}}\ne 0$. We assume without loss of generality that
  $\zeta_{j,l_{j}}=1$ for all $j$.  By \cite[Proposition
  1.37]{DKS2011:hvmsan}, there exists $\kappa\gg0$ such that 
  \begin{equation*}
    \Big(\prod_{j=0}^{n} u_{j,l_{j}}^{\kappa} \Big) \Elim_{\bfe_{0},\dots,
      \bfe_{n}}({X_{\bfcA}})\in (L_{0},\dots,L_{n})\quad \subset \C[\bfu][\bfx]/I(X_{\bfcA}),     
  \end{equation*}
  where $L_{i}$ denotes the general linear form as in
  \eqref{eq:71}. Choose $G_{j}\in \C[\bfu][\bfx]$ such that
  \begin{equation}\label{eq:25}
    \Big( \prod_{j=0}^{n}
    u_{j,l_{j}}^{\kappa} \Big) \Elim_{\bfe_{0},\dots, \bfe_{n}}({X_{\bfcA}})
    =\sum_{j=0}^{n}G_{j}L_{j}  \quad \pmod{I(X_{\bfcA})\otimes \C[\bfu]}.
  \end{equation}
  Computing partial derivatives, evaluating at the point $(\bfzeta,
  \bff)$ and using the fact that $\Elim_{\bfcA}=\Elim_{\bfe_{0},\dots,
    \bfe_{n}}({X_{\bfcA}})$, we deduce from \eqref{eq:25} that
  \begin{equation*}
  \frac{\partial\Elim_{\bfcA}}{\partial u_{i,j}}(\bff)= G_{i}(\bff,
  \bfzeta) \zeta_{i,j} \quad \text{ for } i=0,\dots, n  \text{ and }  j=0,\dots, c_{i}.
  \end{equation*}
  By the choice of $\bff$ in \eqref{eq:3}, 
  \begin{equation}
    \label{eq:72}
  ( \zeta_{i,j})_{i,j}=
  \bigg(\frac{\partial\Elim_{\bfcA}}{\partial
    u_{i,j}}(\bff)\bigg)_{i,j}\in \P^{\bfc}.
  \end{equation}
It follows that $\bfzeta\in X_{\bfcA}\setminus
  \bigcup_{i,j}V(x_{i,j})$. By Lemma \ref{lemm:1}, this latter subset
  coincides with the image of the map $\varphi_{\bfcA}$. It follows
  that $\varphi_{\bfcA}^{-1}(\bfzeta) $ is a nonempty subset of
  $V(\bff)$, proving the first statement.

  Now let $\xi\in V(\bff)$. The point $\bfzeta=\varphi_{\bfcA}(\xi)$
  satisfies \eqref{eq:69} and so it also satisfies \eqref{eq:72},
  which implies the formula~\eqref{eq:16} and completes the proof.
\end{proof}
 
\begin{proposition} \label{prop:9} Suppose that $L_{\bfcA}=M$ and that
  $\bfcA$ is essential. Then
  \begin{displaymath}
       \deg(\pi_{\bfcA}|_{\Omega_{\bfcA}})= 1  \quad \text{ and }
       \quad \Res_{\bfcA}=\pm\Elim_{\bfcA}.
  \end{displaymath}
\end{proposition}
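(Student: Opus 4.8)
The plan is to deduce the statement from \eqref{eq:77}: it suffices to show that $d_{\bfcA}:=\deg(\pi_{\bfcA}|_{\Omega_{\bfcA}})$ equals $1$, since then $\Res_{\bfcA}=\pm\Elim_{\bfcA}^{\,1}=\pm\Elim_{\bfcA}$. First I would record two consequences of the hypotheses. By Remark \ref{rem:1}, the essential family $\bfcA$ is its own unique essential subfamily, so $\Elim_{\bfcA}\ne1$ by Proposition \ref{prop:3}\eqref{item:16} and $\deg_{\bfu_i}(\Elim_{\bfcA})>0$ for every $i$ by Proposition \ref{prop:3}\eqref{item:17}. Second, since $L_{\bfcA}=M$, every scalar $\chi^{a}(\xi)$ with $a\in M$ is determined by $\varphi_{\bfcA}(\xi)$ (through the ratios $\chi^{a_{i,j}-a_{i,0}}(\xi)$, whose exponents generate $L_{\bfcA}=M$), so the monomial map $\varphi_{\bfcA}\colon\T_M\to\P^{\bfc}$ is injective; equivalently, it is a closed immersion of $\T_M$ onto its image in the open orbit, cf.\ the proof of Lemma \ref{lemm:11} and \eqref{eq:58}.

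The heart of the argument would apply Lemma \ref{lemm:7}, for which one needs the open set $\mathcal U=V(\Elim_{\bfcA})\setminus\bigcup_{i,j}V(\partial\Elim_{\bfcA}/\partial u_{i,j})$ from \eqref{eq:3} to be dense in the irreducible hypersurface $V(\Elim_{\bfcA})$. Since each $\partial\Elim_{\bfcA}/\partial u_{i,j}$ has total degree strictly below $\deg(\Elim_{\bfcA})$, it cannot be divisible by the irreducible $\Elim_{\bfcA}$, so $\mathcal U$ is dense as soon as every variable $u_{i,j}$ actually occurs in $\Elim_{\bfcA}$. I would check this as follows: by Proposition \ref{prop:6} and Proposition \ref{prop:3}\eqref{item:17} one has $\MV_M(\Delta_0,\dots,\widehat{\Delta_i},\dots,\Delta_n)=\deg_{\bfu_i}(\Res_{\bfcA})>0$, so by Bernstein's theorem \eqref{eq:46} a generic family $f_k\in\C[M]$ supported on $\cA_k$, $k\ne i$, has a common zero $\eta\in\T_M$; fixing such $f_k$'s, the restriction of $\Elim_{\bfcA}$ to the linear slice $\{\bfu_k:k\ne i\ \text{fixed}\}\cong\P^{c_i}$ is a nonzero form (here one uses $\deg_{\bfu_i}(\Elim_{\bfcA})>0$) vanishing on the hyperplane $\{f_i:f_i(\eta)=0\}$, hence divisible by the linear form $\sum_j\chi^{a_{i,j}}(\eta)\,u_{i,j}$, in which $u_{i,j}$ has the nonzero coefficient $\chi^{a_{i,j}}(\eta)$. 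Thus $u_{i,j}$ occurs in $\Elim_{\bfcA}$.

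Granting this, for any $\bff\in\mathcal U$ Lemma \ref{lemm:7} gives $V(\bff)\ne\emptyset$ together with the identity $\varphi_{\bfcA}(\xi)=(\partial\Elim_{\bfcA}/\partial u_{i,j}(\bff))_{i,j}$ valid for \emph{every} $\xi\in V(\bff)$; injectivity of $\varphi_{\bfcA}$ then forces $\#V(\bff)=1$, so the fiber of $\pi_{\bfcA}|_{\Omega_{\bfcA}}$ over each point of the dense open set $\mathcal U$ is a single point. To conclude, I would use that $\pi_{\bfcA}|_{\Omega_{\bfcA}}\colon\Omega_{\bfcA}\to V(\Elim_{\bfcA})$ is a dominant morphism of integral varieties of the same dimension over a field of characteristic $0$, hence generically étale onto its image: over a dense open subset of $V(\Elim_{\bfcA})$ every fiber consists of exactly $d_{\bfcA}$ reduced points. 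Intersecting that dense open with $\mathcal U$ gives $d_{\bfcA}=1$, and therefore $\Res_{\bfcA}=\pm\Elim_{\bfcA}$. The step I expect to be the main obstacle is the middle one — verifying that no variable $u_{i,j}$ is missing from $\Elim_{\bfcA}$, since otherwise the hypothesis locus of Lemma \ref{lemm:7} would be empty; the remaining steps are a routine combination of Lemma \ref{lemm:7}, the injectivity of $\varphi_{\bfcA}$, and generic smoothness.
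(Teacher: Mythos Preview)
Your proposal is correct and follows essentially the same route as the paper: both invoke Lemma~\ref{lemm:7} to show that over the open set $U=V(\Elim_{\bfcA})\setminus\bigcup_{i,j}V(\partial\Elim_{\bfcA}/\partial u_{i,j})$ every fiber of $\pi$ is a single point (using that $L_{\bfcA}=M$ makes $\varphi_{\bfcA}$ injective), and then conclude $d_{\bfcA}=1$ by a generic-fiber argument. You are in fact more careful than the paper on one point: the paper deduces $U\ne\emptyset$ directly from $\deg_{\bfu_i}(\Elim_{\bfcA})>0$, while you supply the missing verification that every individual variable $u_{i,j}$ actually occurs in $\Elim_{\bfcA}$ (otherwise some $\partial\Elim_{\bfcA}/\partial u_{i,j}$ would vanish identically and $U$ would be empty).
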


\begin{proof}
 As $\bfcA$ is the unique essential subfamily, by
  Proposition \ref{prop:3}\eqref{item:16} we have that $\ov
  {\pi(\Omega_{\bfcA})}$ is a hypersurface of $\P^{\bfc}$ with
  defining equation $\Elim_{\bfcA}$. Consider the open subset of this
  hypersurface given by
\begin{displaymath}
U=V(\Elim_{\bfcA}) \setminus \bigcup_{i=0}^{n}\bigcup_{j=0}^{c_{i}}
  V\bigg(\frac{\partial\Elim_{\bfcA}}{\partial u_{i,j}}\bigg).
\end{displaymath}
By Proposition \ref{prop:3}\eqref{item:17}, $\deg_{\bfu_{i}}(\Elim_{\bfcA})>0$ for
all $i$ and so $U\ne\emptyset$.

Take $\bff\in U$. By Lemma \ref{lemm:7}, $V(\bff)\ne\emptyset$ and,
given $\xi\in V(\bff)\subset \T_{M}$, one can compute
$\chi^{a-b}(\xi)$ for all $a,b\in \cA_{i}$, $i=0,\dots, n$, in terms
of $\bff$. Hence, one can compute $\chi^{a}(\xi)$ for all $a\in
L_{\bfcA}$. Since $L_{\bfcA}=M$, it follows that $\xi$ is univocally
determined and so
\begin{displaymath}
  \# \pi_{\bfcA}(\bff) =1 \quad \text{ for all } \bff\in U.
\end{displaymath}
By \cite[\S II.6, Theorem 4]{Shafarevich:bag},  $
\deg(\pi_{\bfcA}|_{\Omega_{\bfcA}})= 1$, which proves the first
statement.

The second statement follows directly from the first one and \eqref{eq:77}.
\end{proof}

Suppose that $\Elim_{\bfcA}\ne 1$ and let $\bfcA_{J}$ be the unique
essential subfamily of supports.  For each $i\in J$, choose $b_{i}\in
M$ such that $\cA_{i}-b_{i}\subset L_{\bfcA_{J}}$.  Then 
$L_{\bfcA_{J}}$ has rank $\#J-1$ and $\cA_{i}-b_{i}$, $i\in J$, is a
family of nonempty finite subsets of $ L_{\bfcA_{J}}$. We define
$\Elim_{\bfcA_{J}}\in\Z[\{\bfu_{i}\}_{i\in J}]$ as the sparse
eliminant associated to the lattice $L_{\bfcA_{J}}$ and this family of
supports.  This polynomial does not depend on the choice of the
vectors $b_{i}$.

\begin{proposition}
  \label{prop:7} Suppose that
  $\Elim_{\bfcA}\ne 1$ and let  $\bfcA_{J}$ be the unique
  essential subfamily of $\bfcA$. Then
  \begin{displaymath}
 \Elim_{\bfcA}=\pm\Elim_{\bfcA_{J}}   
  \end{displaymath}
and, for $i\in J$, 
\begin{equation*}
  \deg_{\bfu_{i}}(\Elim_{\bfcA})=
  \MV_{L_{\bfcA_{J}}}(\{\Delta_{j}-b_{j}\}_{j\in J\setminus \{i\}}).
\end{equation*}
\end{proposition}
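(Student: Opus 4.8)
The plan is to reduce the sparse eliminant $\Elim_{\bfcA}$ to the sparse eliminant $\Elim_{\bfcA_J}$ associated to the essential subfamily, by exploiting the fact that only the variables $\bfu_i$ with $i\in J$ can actually appear in $\Elim_{\bfcA}$, and then to transfer this reduction through the identification $\Elim_{\bfcA}=\pm\Elim_{\bfe_0,\dots,\bfe_n}(|Z_{\bfcA}|)$ of Proposition \ref{prop:1}. First I would recall from Proposition \ref{prop:3}\eqref{item:17} that $\deg_{\bfu_i}(\Elim_{\bfcA})=0$ precisely when $i\notin J$, so $\Elim_{\bfcA}$ is a polynomial in the variables $\{\bfu_i\}_{i\in J}$ only. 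The task is then to show this polynomial coincides, up to sign, with $\Elim_{\bfcA_J}$, where the latter is built from the subsets $\cA_i-b_i\subset L_{\bfcA_J}$ over the lattice $L_{\bfcA_J}$. Since sparse eliminants are invariant up to sign under translations of the supports (Proposition \ref{prop:5}), I may assume $b_i=0$, i.e.\ $\cA_i\subset L_{\bfcA_J}$ for $i\in J$, after translating; this does not affect $\Elim_{\bfcA}$ either.

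The key geometric step is to compare the two incidence varieties. For the family $\bfcA$, the relevant object is $\Omega_{\bfcA}\subset \T_M\times\P^{\bfc}$ with its projection $\pi$ to $\P^{\bfc}$, whereas for $\bfcA_J$ the incidence variety $\Omega_{\bfcA_J}$ lives in $\T_{L_{\bfcA_J}}\times\prod_{i\in J}\P^{c_i}$ with projection $\pi_J$ onto $\prod_{i\in J}\P^{c_i}$. I would argue that $\ov{\pi(\Omega_{\bfcA})}$ is the preimage, under the coordinate projection $\P^{\bfc}\to\prod_{i\in J}\P^{c_i}$, of $\ov{\pi_J(\Omega_{\bfcA_J})}$. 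One inclusion is clear: if $(f_i)_{i\in J}$ has a common zero in $\T_{L_{\bfcA_J}}$, that zero (any lift to $\T_M$, using $L_{\bfcA_J}$ possibly a proper sublattice of $M$) extends to a common zero of the full family once the remaining $f_i$, $i\notin J$, are made to vanish — but here one must be careful, since for $i\notin J$ the polynomial $F_i$ need not vanish. The cleaner route is to use the toric picture: by Proposition \ref{prop:1}, $\Elim_{\bfcA}=\pm\Elim_{\bfe_0,\dots,\bfe_n}(|Z_{\bfcA}|)$, and the support $|Z_{\bfcA}|=X_{\bfcA}$ (using $\Elim_{\bfcA}\ne1$ and \eqref{eq:39}) is, under the coordinate projection $\pr_J\colon\P^{\bfc}\to\prod_{i\in J}\P^{c_i}$, mapped isomorphically onto $X_{\bfcA_J}$: indeed $\dim X_{\bfcA}=\rank L_{\bfcA}=n$, $\dim X_{\bfcA_J}=\rank L_{\bfcA_J}=\#J-1$, and the fibers of $\pr_J$ restricted to $X_{\bfcA}$ are the orbits parametrized by $L_{\bfcA}/L_{\bfcA_J}$ while each $X_{\cA_i}$ with $i\notin J$ is a single point of $\P^{c_i}$ modulo the torus action… in fact the map $\varphi_{\bfcA}$ factors through $\varphi_{\bfcA_J}$ composed with an isomorphism onto the image, because the characters $\chi^{a_{i,j}-a_{i,0}}$ for $i\notin J$ lie in the subgroup generated by those for $i\in J$ only up to the lattice $L_{\bfcA}$, which equals $M$ here, so the argument needs $L_{\bfcA}=L_{\bfcA_J}$, which holds since $\bfcA_J$ essential forces $L_{\bfcA_I}\subset L_{\bfcA_J}$ for every $I$. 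Thus $\pr_J$ restricts to a birational (indeed bijective) morphism $X_{\bfcA}\to X_{\bfcA_J}$, and by the behavior of eliminants under such coordinate projections (\cite[Proposition 1.27 or the projection compatibility]{DKS2011:hvmsan}) one gets $\Elim_{\bfe_0,\dots,\bfe_n}(X_{\bfcA})=\pm\Elim_{(\bfe_i)_{i\in J}}(X_{\bfcA_J})=\pm\Elim_{\bfcA_J}$, which is the first assertion.

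The partial-degree formula then follows by applying Proposition \ref{prop:6} to the family $\bfcA_J$ over the lattice $L_{\bfcA_J}$: since $\bfcA_J$ is itself essential, its sparse eliminant equals its sparse resultant by Proposition \ref{prop:9} (using $L_{\bfcA_J}=L_{\bfcA_J}$ tautologically and $\bfcA_J$ essential), so $\deg_{\bfu_i}(\Elim_{\bfcA_J})=\deg_{\bfu_i}(\Res_{\bfcA_J})=\MV_{L_{\bfcA_J}}(\{\Delta_j-b_j\}_{j\in J\setminus\{i\}})$ for $i\in J$, and combined with $\Elim_{\bfcA}=\pm\Elim_{\bfcA_J}$ this gives the stated equality. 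The main obstacle I anticipate is making the claim ``$\pr_J$ restricts to an isomorphism $X_{\bfcA}\to X_{\bfcA_J}$'' fully rigorous: one must verify that $L_{\bfcA}=L_{\bfcA_J}$ (a consequence of condition \eqref{item:3} of Proposition \ref{prop:3} together with the essentiality of $\bfcA_J$ and the counting inequality \eqref{eq:4}), and that for $i\notin J$ the $i$-th coordinate of $\varphi_{\bfcA}$ is determined by the others — equivalently that the toric variety $X_{\cA_i}$ embeds into the orbit closure already spanned by $\{\cA_j\}_{j\in J}$ — which is where the precise combinatorics of essential subfamilies enters and requires care; once that is settled, the rest is a routine invocation of the functoriality properties of eliminants and resultants recorded in \S\ref{sec:result-elim-mult} and of Propositions \ref{prop:6} and \ref{prop:9}.
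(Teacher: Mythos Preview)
Your proposed route through the toric varieties has a genuine gap: the claim that $L_{\bfcA}=L_{\bfcA_J}$, on which your birationality argument rests, is false whenever $J\subsetneq\{0,\dots,n\}$. Indeed, $\Elim_{\bfcA}\ne1$ forces $\rank(L_{\bfcA})=n$ by Proposition~\ref{prop:3}\eqref{item:3}, whereas essentiality gives $\rank(L_{\bfcA_J})=\#J-1<n$. Consequently $\dim X_{\bfcA}=n$ and $\dim X_{\bfcA_J}=\#J-1$ differ, so $\pr_J\colon X_{\bfcA}\to X_{\bfcA_J}$ cannot be birational or bijective. Example~\ref{exm:5} already exhibits this: with $\cA_0=\{a\}$ a single point and $J=\{0\}$, one has $L_{\bfcA_J}=0$ while $L_{\bfcA}$ has full rank $n$. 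Your assertion that ``$\bfcA_J$ essential forces $L_{\bfcA_I}\subset L_{\bfcA_J}$ for every $I$'' is simply not a consequence of essentiality.

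The paper avoids this obstruction by working directly with the incidence varieties rather than the toric images. The inclusion $L_{\bfcA_J}\hookrightarrow M$ induces a \emph{surjective} homomorphism of tori $\psi\colon\T_M\to\T_{L_{\bfcA_J}}$, and $\psi\times\pr_J$ maps $\Omega_{\bfcA}$ into $\Omega_{\bfcA_J}$ (since for $i\in J$ the polynomial $F_i$ only sees $\psi(\xi)$). This yields a commutative square whose bottom arrow gives $\pr_J(\ov{\pi(\Omega_{\bfcA})})\subset\ov{\pi_J(\Omega_{\bfcA_J})}$, hence an inclusion of ideals $(\Elim_{\bfcA_J})\subset(\Elim_{\bfcA})\cap\Q[\{\bfu_i\}_{i\in J}]$. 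Both sides are principal prime ideals (the right-hand side because $\Elim_{\bfcA}$ already lies in $\Q[\{\bfu_i\}_{i\in J}]$ by Proposition~\ref{prop:3}\eqref{item:17}), so the inclusion forces equality of generators up to sign. Your derivation of the degree formula from Propositions~\ref{prop:9} and~\ref{prop:6} is correct and matches the paper.
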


\begin{proof}
  The inclusion of lattices $L_{\bfcA_{J}}\hookrightarrow {M}$ induces
  a surjective homomorphism of tori $\psi\colon\T_{M}\to
  \T_{L_{\bfcA_{J}}}$.  Consider the incidence variety
  $\Omega_{\bfcA_{J}} \subset\T_{L_{\bfcA_{J}}}\times\prod_{i\in
    J}\P^{c_{i}}$.  Then there is a commutative diagram
\begin{displaymath}
  \xymatrix{\Omega_{\bfcA}\ar[r]^{\psi\times\pr_{J}}\ar[d]_{\pi} & \Omega_{\bfcA_{J}} \ar[d]^{\pi_{J}} \\
\pi(\Omega_{\bfcA})\ar[r]^{\pr_{J}} & \pi_{J}(\Omega_{\bfcA_{J}}) }
\end{displaymath}
where $\pi_{J}$ and $\pr_{J}$ are  induced by the projections
\begin{math}
  \T_{L_{\bfcA_{J}}}\times
\prod_{i\in{J}} \P^{c_{i}} \to \prod_{i\in{J}}
\P^{c_{i}}
\end{math}
and $\P^{\bfc}\to \prod_{i\in J}\P^{c_{i}}$, respectively.  Let $\Q[\{\bfu_{i}\}_{i\in J}]\hookrightarrow
\Q[\bfu]$ be the inclusion of algebras corresponding to the arrow in
the bottom row. Then there is an inclusion of ideals
\begin{displaymath}
  (\Elim_{\bfcA})\cap \Q[\{\bfu_{i}\}_{i\in J}]\supset (\Elim_{\bfcA_{J}}). 
\end{displaymath}
The hypothesis that $\Elim_{\bfcA}\ne 1$ implies that both ideals are
principal and irreducible. We conclude that
$\Elim_{\bfcA}=\pm\Elim_{\bfcA_{J}}$, which gives the first
statement. 

The second statement follows from the first one together with
Propositions \ref{prop:9} and \ref{prop:6}.
\end{proof}

\begin{lemma}
  \label{lemm:10}
  Let $L\subset M$ be a saturated sublattice of rank $m$ and $P_{i}$,
  $i=1,\dots, n$, convex bodies of $M_{\R}$ such that $P_{i}\subset L_{\R}$ for
  $i=1,\dots, m$. Then
  \begin{equation} \label{eq:9}
    \MV_{M}(P_{1}, \dots,
    P_{n})=
    \MV_{L}(P_{1}, \dots,
    P_{m})\MV_{M/L}(\varpi(P_{m+1}), \dots, \varpi(P_{n})),
  \end{equation}
  where $\varpi$ denotes the projection $M_{\R}\to M_{\R}/L_{\R}$.
\end{lemma}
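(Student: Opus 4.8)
The plan is to deduce \eqref{eq:9} from an iterated coefficient extraction applied to the polynomial $\vol_M(\sum_{i=1}^n\lambda_i P_i)$ in auxiliary variables $\lambda_1,\dots,\lambda_n$, using Fubini's theorem along $\varpi$ together with the polarization of the mixed volume. We may assume $\rank M=n$, since otherwise one checks directly that both sides of \eqref{eq:9} vanish (the number of bodies then differs from the dimension of the ambient space). Recall the standard facts, see \cite[\S5.1]{Schneider:cbbmt}: for convex bodies $K_1,\dots,K_r\subset M_\R$ and reals $\mu_j\ge0$, the function $\vol_M(\sum_j\mu_j K_j)$ is a homogeneous polynomial of degree $\rank M$ in the $\mu_j$; and, in agreement with the inclusion--exclusion formula \eqref{defmultivolume}, $\MV_M(P_1,\dots,P_n)$ is exactly the coefficient of the monomial $\lambda_1\cdots\lambda_n$ in $\vol_M(\sum_{i=1}^n\lambda_i P_i)$.

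First I would fix a lattice splitting. Since $L$ is saturated, $M/L$ is torsion-free, hence free, so $L\hookrightarrow M$ admits a retraction; choose a complement $M'\subset M$, so that $M=L\oplus M'$ and $\varpi$ restricts to a lattice isomorphism $M'\xrightarrow{\sim} M/L$. This produces an identification $M_\R=L_\R\times(M/L)_\R$ under which $P_i\subset L_\R\times\{0\}$ for $i=1,\dots,m$, and under which $\vol_M$ is the product measure $\vol_L\otimes\vol_{M/L}$, because a fundamental domain for $M$ is the product of fundamental domains for $L$ and for $M'$, each of covolume $1$ for its respective Haar measure.

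Next, fix $\lambda_1,\dots,\lambda_n>0$ and set $Q_1=\sum_{i=1}^m\lambda_i P_i\subset L_\R\times\{0\}$ and $Q_2=\sum_{i=m+1}^n\lambda_i P_i$, so that $\sum_{i=1}^n\lambda_i P_i=Q_1+Q_2$. As $Q_1$ lies in $L_\R\times\{0\}$, the fiber of $Q_1+Q_2$ over $y\in(M/L)_\R$ equals $Q_1+(Q_2)_y$, where $(Q_2)_y=\{x\in L_\R\mid(x,y)\in Q_2\}$, and it is nonempty exactly when $y\in\varpi(Q_2)=\sum_{i=m+1}^n\lambda_i\varpi(P_i)$. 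Fubini's theorem then gives
\begin{equation*}
  \vol_M\Big(\sum_{i=1}^n\lambda_i P_i\Big)=\int_{\varpi(Q_2)}\vol_L\big(Q_1+(Q_2)_y\big)\dd y.
\end{equation*}
The crucial observation is that both the domain $\varpi(Q_2)$ and each fiber $(Q_2)_y$ depend only on $\lambda_{m+1},\dots,\lambda_n$ and on $y$, not on $\lambda_1,\dots,\lambda_m$. Hence, fixing $\lambda_{m+1},\dots,\lambda_n$ and viewing this as an identity of polynomials in $\lambda_1,\dots,\lambda_m$, I extract the coefficient of $\lambda_1\cdots\lambda_m$ under the integral sign, which is legitimate because the integration domain is constant in these variables. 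For each $y$ the integrand $\vol_L\big(\sum_{i=1}^m\lambda_i P_i+(Q_2)_y\big)$ is a polynomial of degree $\rank L=m$ in $\lambda_1,\dots,\lambda_m$, and its coefficient of the degree-$m$ squarefree monomial $\lambda_1\cdots\lambda_m$ is unchanged by the fixed Minkowski summand $(Q_2)_y$ (introduce a variable $\lambda_{m+1}$ for this summand: no monomial involving $\lambda_{m+1}$ can equal $\lambda_1\cdots\lambda_m$, which already has total degree $m$), hence equals $\MV_L(P_1,\dots,P_m)$ by polarization. Therefore
\begin{equation*}
  [\lambda_1\cdots\lambda_m]\;\vol_M\Big(\sum_{i=1}^n\lambda_i P_i\Big)=\MV_L(P_1,\dots,P_m)\cdot\vol_{M/L}\Big(\sum_{i=m+1}^n\lambda_i\varpi(P_i)\Big).
\end{equation*}

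Finally, extracting the coefficient of $\lambda_{m+1}\cdots\lambda_n$ from both sides completes the proof: on the left, iterated extraction of the disjoint monomials $\lambda_1\cdots\lambda_m$ and $\lambda_{m+1}\cdots\lambda_n$ is the same as extraction of $\lambda_1\cdots\lambda_n$, which by the recalled polarization gives $\MV_M(P_1,\dots,P_n)$; on the right, the coefficient of $\lambda_{m+1}\cdots\lambda_n$ in $\vol_{M/L}(\sum_{i>m}\lambda_i\varpi(P_i))$ is $\MV_{M/L}(\varpi(P_{m+1}),\dots,\varpi(P_n))$. This is precisely \eqref{eq:9}. I expect the two delicate points to be the normalization of the Haar measures in the Fubini step, which is settled by passing to a lattice complement of $L$, and the justification for pulling the coefficient of $\lambda_1\cdots\lambda_m$ through the integral, which works exactly because neither the domain of integration nor the fibers involve $\lambda_1,\dots,\lambda_m$.
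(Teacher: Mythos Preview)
Your proof is correct and follows essentially the same route as the paper's: both use the saturation of $L$ to split $M\simeq L\oplus M'$, apply Fubini along $\varpi$, and read off mixed volumes as coefficients in the Minkowski polynomial. The only difference is cosmetic: the paper first establishes the diagonal case $P_1=\dots=P_m=P$, $P_{m+1}=\dots=P_n=Q$ via a single parameter $\lambda$ and then appeals to a ``standard polarization argument,'' whereas you carry all variables $\lambda_1,\dots,\lambda_n$ from the start and perform the coefficient extraction in two stages, which makes the polarization explicit rather than deferred.
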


\begin{proof} 
  The fact that $L$ is saturated implies that there is an isomorphism
  $M\simeq \Z^{n}$ identifying $L$ with $\Z^{m}\times
  \{\bfzero\}$. The mixed volumes in \eqref{eq:9} are invariant under
  isomorphism of lattices, and so it suffices to prove this formula in
  the case when $M=\Z^{n}$ and $L=\Z^{m}\times \{\bfzero\}$.

Let $P, Q\subset \R^{n}$ be compact bodies such that $P\subset
\R^{m}\times \{\bfzero\}$. The function on $\R_{\ge0}$ given by
$\lambda\mapsto \vol_{\Z^{n}}(\lambda P+Q)$
is polynomial in $\lambda,$ and
\begin{equation}
  \label{eq:73}
\MV_{\Z^{n}}(\overbrace{P,\dots, P}^{m},\overbrace{Q,\dots, Q}^{n-m})=
\coeff_{\lambda^{m}}(\vol_{\Z^{n}}(\lambda P+Q)).
\end{equation}
Let $\lambda\in \R_{\ge 0}$. By Fubini's theorem,
\begin{multline}
  \label{eq:74}
  \vol_{\Z^{n}}(\lambda P+Q)= \int_{\R^{n-m}} \vol_{\Z^{m}}((\lambda
  P+Q) \cap
  (\R^{m}+\bfx) )\dd \bfx\\=  \int_{\R^{n-m}} \vol_{\Z^{m}}(\lambda
  P+(Q \cap
  (\R^{m}+\bfx) ) \dd \bfx.
\end{multline}
with $\dd\bfx=\dd x_{1}\dots \dd x_{n-m}$. The $m$-dimensional volume of $\lambda P+(Q
\cap (\R^{m}+\bfx)) $ is different from 0 if and only if $Q \cap
(\R^{m}+\bfx)\ne \emptyset$ or, equivalently, if and only if $\bfx\in
\varpi(Q)$.  In that case, $\coeff_{\lambda^{m}}(\vol_{\Z^{m}}(\lambda
P+(Q \cap (\R^{m}+\bfx) ))= \vol_{\Z^{m}}(P)$. Hence,
\begin{multline}\label{eq:75}
  \coeff_{\lambda^{m}}\Big( \int_{\R^{n-m}} \vol_{\Z^{m}}(\lambda
  P+(Q \cap
  (\R^{m}+\bfx) ) \dd \bft\Big)\\ = \vol_{\Z^{m}}(P) \int_{\varpi(Q)}
  \dd\bfx=  \vol_{\Z^{m}}(P) \vol_{\Z^{n-m}}(\varpi(Q)).
\end{multline}
By \eqref{eq:73}, \eqref{eq:74} and \eqref{eq:75}, it follows that
\begin{equation*}
  \MV_{\Z^{n}}(\overbrace{P,\dots, P}^{m},\overbrace{Q,\dots,
    Q}^{n-m})= \vol_{\Z^{m}}(P) \vol_{\Z^{n-m}}(\varpi(Q)), 
\end{equation*}
which gives the formula \eqref{eq:9} for the case when
$P_{1}=\dots=P_{m}=P$ and $P_{m+1}=\dots=P_{n}=Q$. The general case
follows by a standard polarization argument.
\end{proof}

The following result shows that the degree of the restriction of
$\pi$ to the incidence variety $\Omega_{\bfcA}$ and, {\it a
  fortiori}, the relation between the sparse resultant and the sparse
eliminant, can be expressed in combinatorial terms. This formula
already appears in \cite[Theorem 2.23]{Esterov:dsnp}.

\begin{proposition} \label{prop:10} Suppose that $\Res_{\bfcA}\ne1$
  and let $\bfcA_{J}$ be the unique essential subfamily of
  $\bfcA$. Then
  \begin{displaymath}
    \deg(\pi_{\bfcA}|_{\Omega_{\bfcA}})= [L_{\bfcA_{J}}^{\sat}:L_{\bfcA_{J}}]
    \MV_{M/L_{\bfcA_{J}}^{\sat}}(\{\varpi(\Delta_{i}) \}_{i\notin J}), 
  \end{displaymath}
  where $L_{\bfcA_{J}}^{\sat}= (L_{\bfcA_{J}}^{\sat}\otimes\Q) \cap M$
  denotes the saturation of the sublattice $L_{\bfcA_{J}}$, and $\varpi$
  the projection $M_{\R}\to M/L_{\bfcA_{J}}^{\sat}\otimes \R$.  In
  particular,
\begin{displaymath}
  \Res_{\bfcA}= \pm\Elim_{\bfcA}^{[L_{\bfcA_{J}}^{\sat}:L_{\bfcA_{J}}]
    \MV_{M/L_{\bfcA_{J}}^{\sat}}(\{\varpi(\Delta_{i}) \}_{i\notin J})}.
\end{displaymath}
\end{proposition}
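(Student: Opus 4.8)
The plan is to compute $d_{\bfcA}:=\deg(\pi_{\bfcA}|_{\Omega_{\bfcA}})$, since the displayed formula for $\Res_{\bfcA}$ is then immediate from \eqref{eq:77}. Rather than counting the generic fibre of $\pi_{\bfcA}$ directly, I would extract $d_{\bfcA}$ from the already-established combinatorial formulae for the partial degrees of $\Res_{\bfcA}$ (Proposition \ref{prop:6}) and of $\Elim_{\bfcA}$ (Proposition \ref{prop:7}), by means of the multiplicativity of mixed volumes along a saturated sublattice (Lemma \ref{lemm:10}). Since $\Res_{\bfcA}\ne1$ we also have $\Elim_{\bfcA}\ne1$ by \eqref{eq:77}, so Proposition \ref{prop:3}\eqref{item:16} provides the unique essential subfamily $\bfcA_{J}$; for each $j\in J$ fix $b_{j}\in M$ with $\cA_{j}-b_{j}\subset L_{\bfcA_{J}}$ as in Proposition \ref{prop:7}, and write $L=L_{\bfcA_{J}}^{\sat}$ and $\varpi\colon M_{\R}\to M_{\R}/L_{\R}$ for the quotient projection.

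First I would fix an index $i\in J$. From \eqref{eq:77} one has $\deg_{\bfu_{i}}(\Res_{\bfcA})=d_{\bfcA}\cdot\deg_{\bfu_{i}}(\Elim_{\bfcA})$. Proposition \ref{prop:6} rewrites the left-hand side as $\MV_{M}(\Delta_{0},\dots,\Delta_{i-1},\Delta_{i+1},\dots,\Delta_{n})$, and Proposition \ref{prop:7} rewrites $\deg_{\bfu_{i}}(\Elim_{\bfcA})$ as $\MV_{L_{\bfcA_{J}}}(\{\Delta_{j}-b_{j}\}_{j\in J\setminus\{i\}})$. Because $i\in J$, this last mixed volume is strictly positive by Proposition \ref{prop:3}\eqref{item:17}; this positivity is what will license the final cancellation.

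The core step is to split $\MV_{M}(\Delta_{0},\dots,\Delta_{i-1},\Delta_{i+1},\dots,\Delta_{n})$. By translation invariance of the mixed volume it equals $\MV_{M}\big(\{\Delta_{j}-b_{j}\}_{j\in J\setminus\{i\}},\{\Delta_{j}\}_{j\notin J}\big)$, and here exactly $\#J-1=\rank L$ of the $n$ polytopes lie in $L_{\R}=L_{\bfcA_{J},\R}$. Applying Lemma \ref{lemm:10} to the saturated sublattice $L\subset M$ of rank $\#J-1$ gives
\[
\MV_{M}\big(\{\Delta_{j}\}_{j\ne i}\big)=\MV_{L}\big(\{\Delta_{j}-b_{j}\}_{j\in J\setminus\{i\}}\big)\cdot\MV_{M/L}\big(\{\varpi(\Delta_{j})\}_{j\notin J}\big).
\]
Next, since $L_{\bfcA_{J}}$ is a sublattice of $L$ of finite index spanning the same subspace, the two normalized Haar measures on $L_{\R}$ are related by $\vol_{L}=[L:L_{\bfcA_{J}}]\cdot\vol_{L_{\bfcA_{J}}}$, whence $\MV_{L}\big(\{\Delta_{j}-b_{j}\}_{j\in J\setminus\{i\}}\big)=[L:L_{\bfcA_{J}}]\cdot\MV_{L_{\bfcA_{J}}}\big(\{\Delta_{j}-b_{j}\}_{j\in J\setminus\{i\}}\big)$. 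Substituting into $\deg_{\bfu_{i}}(\Res_{\bfcA})=d_{\bfcA}\cdot\deg_{\bfu_{i}}(\Elim_{\bfcA})$ and cancelling the common positive factor $\MV_{L_{\bfcA_{J}}}(\{\Delta_{j}-b_{j}\}_{j\in J\setminus\{i\}})=\deg_{\bfu_{i}}(\Elim_{\bfcA})$ yields $d_{\bfcA}=[L_{\bfcA_{J}}^{\sat}:L_{\bfcA_{J}}]\cdot\MV_{M/L_{\bfcA_{J}}^{\sat}}(\{\varpi(\Delta_{i})\}_{i\notin J})$, as desired.

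I expect the obstacles to be bookkeeping rather than conceptual: confirming that after the translations exactly $\#J-1$ of the polytopes $\{\Delta_{j}\}_{j\ne i}$ sit in $L_{\R}$, so that Lemma \ref{lemm:10} applies with $m=\rank L$ and leaves precisely $n-\#J+1=\#\{j\notin J\}$ polytopes to be projected to $M_{\R}/L_{\R}$; handling the degenerate case $J=\{0,\dots,n\}$, where $L=L_{\bfcA_{J}}^{\sat}=M$, the family $\{\Delta_{j}\}_{j\notin J}$ is empty, $\MV_{M/L}$ of the empty family is $1$ by the convention of \S\ref{sec:root-count-algebr}, and the formula reduces to $d_{\bfcA}=[M:L_{\bfcA_{J}}]$; and getting the direction of the index factor right in the covolume comparison $\vol_{L}=[L:L_{\bfcA_{J}}]\cdot\vol_{L_{\bfcA_{J}}}$. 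Citing the positivity in Proposition \ref{prop:3}\eqref{item:17} explicitly is essential, as otherwise the cancellation producing $d_{\bfcA}$ is not justified.
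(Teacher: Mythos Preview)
Your proposal is correct and follows essentially the same approach as the paper's proof: both compare $\deg_{\bfu_{i}}(\Res_{\bfcA})$ with $\deg_{\bfu_{i}}(\Elim_{\bfcA})$ for some $i\in J$ via Propositions~\ref{prop:6} and~\ref{prop:7}, then factor the resulting mixed volume using Lemma~\ref{lemm:10} and the covolume relation $[L_{\bfcA_{J}}^{\sat}:L_{\bfcA_{J}}]\,\MV_{L_{\bfcA_{J}}}=\MV_{L_{\bfcA_{J}}^{\sat}}$. Your write-up is in fact slightly more careful than the paper's in making the positivity needed for the cancellation explicit (via Proposition~\ref{prop:3}\eqref{item:17}) and in noting the degenerate case $J=\{0,\dots,n\}$.
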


\begin{proof}
  Suppose for simplicity that $J=\{0,\dots, m\}$ and set $L=
  L_{\bfcA_{J}}$ for short. By comparing the degree with respect to
  $\bfu_{0}$ of $\Res_{\bfcA}$ and of $\Elim_{\bfcA}$ using
  Propositions \ref{prop:6} and \ref{prop:7}, we deduce that
\begin{displaymath}
    \deg(\pi_{\bfcA}|_{\Omega_{\bfcA}})=\frac{\MV_{M}(\Delta_{1},\dots,
      \Delta_{n})}{\MV_{L}(\Delta_{1}-a_{1,0},\dots, \Delta_{m}-a_{m,0})}.
\end{displaymath}

We have that $[L^{\sat}:L] \vol_{L}=\vol_{L^{\sat}}$ and so $
[L^{\sat}:L]\MV_{L} = \MV_{L^{\sat}}$. Lemma~\ref{lemm:10} then
implies that
\begin{displaymath}
    \deg(\pi_{\bfcA}|_{\Omega_{\bfcA}})=  [L^{\sat}:L]
    \MV_{M/L}^{\sat}(\varpi(\Delta_{m+1}),\dots,\varpi(\Delta_{n})),
\end{displaymath}
which proves the first statement. The second claim follows then
from \eqref{eq:77}.
\end{proof}

\begin{example} \label{exm:5} Let $\cA_{0},\dots, \cA_{n}$ be a family
  of $n+1$ nonempty finite subsets of $M$ with $\cA_{0}=\{a\}$ for
  $a\in M$. Suppose that $\cA_{0}$ is the unique essential subfamily,
 and set $ \Delta_{i}=\conv(\cA_{i})$, $i=1,\dots, n$. By
  Propositions \ref{prop:7} and \ref{prop:10}, it follows that
  \begin{displaymath}
    \Elim_{\bfcA}= \pm u_{0,a}, \quad \Res_{\cA}=
    \pm u_{0,a}^{\MV_{M}(\Delta_{1}, \dots,\Delta_{n})}
  \end{displaymath}
and $\deg(\pi_{\bfcA}|_{\Omega_{\bfcA}})={\MV_{M}(\Delta_{1},
  \dots,\Delta_{n})}$.
\end{example}

In \cite{Som04}, the second author gave a bound for the height of the
$\bfcA$-eliminant in the case when the family $\bfcA$ is
essential. The following result extends this bound to an arbitrary
family of supports.  Recall that, given a polynomial
$R=\sum_{\bfa}\alpha_{\bfa}\bfu^{\bfa}\in \Z[\bfu]$, its \emph{height}
and its \emph{sup-norm} are respectively defined as
\begin{displaymath}
  \h(R)=\log(\max_{\bfa}|\alpha_{\bfa}|)\quad \text{ and } \quad
  \|R\|_{\sup}=\sup_{|u_{i,j}|=1}|R(\bfu)| . 
\end{displaymath}

\begin{proposition} \label{prop:4} Let notation be as above. Then 
\begin{equation*}
\h(\Res_{\bfcA})\le \sum_{i=0}^{n}\MV_{M}(\Delta_{0}, \dots,
  \Delta_{i-1},\Delta_{i+1},\dots, \Delta_{n}) \log (\#\cA_{i}).
\end{equation*}
\end{proposition}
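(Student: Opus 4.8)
The plan is to pass from the height to the supremum norm, reduce to an essential family of supports that spans its ambient lattice, and there appeal to the bound of \cite{Som04}.

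First I would note that for any $R=\sum_{\bfa}\alpha_{\bfa}\bfu^{\bfa}\in\Z[\bfu]$ the monomials $\bfu^{\bfa}$ are pairwise orthogonal characters of the compact torus $\{|u_{i,j}|=1\}$, so $\max_{\bfa}|\alpha_{\bfa}|\le\|R\|_{L^{2}}\le\|R\|_{\sup}$ and therefore $\h(R)\le\log\|R\|_{\sup}$. It thus suffices to bound $\log\|\Res_{\bfcA}\|_{\sup}$. The reason for passing to the supremum norm is that it behaves multiplicatively on powers, $\|P^{d}\|_{\sup}=\|P\|_{\sup}^{d}$; this is precisely what one needs to accommodate the exponent $d_{\bfcA}$ relating $\Res_{\bfcA}$ and $\Elim_{\bfcA}$ in \eqref{eq:77}.

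Next I would reduce to the essential case. If $\Res_{\bfcA}=1$ there is nothing to prove, since then $\deg_{\bfu_{i}}(\Res_{\bfcA})=0$ for every $i$ and so, by Proposition \ref{prop:6}, every mixed volume on the right-hand side vanishes. If $\Res_{\bfcA}\ne 1$, let $\bfcA_{J}$ be the unique essential subfamily (Proposition \ref{prop:3}) and choose $b_{i}\in M$ with $\cA_{i}-b_{i}\subset L_{\bfcA_{J}}$ for $i\in J$; let $\Res_{\bfcA_{J}}$ denote the sparse resultant of the family $(\cA_{i}-b_{i})_{i\in J}$ relative to the lattice $L_{\bfcA_{J}}$. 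This family is essential and generates its ambient lattice, so Proposition \ref{prop:9} gives $\Res_{\bfcA_{J}}=\pm\Elim_{\bfcA_{J}}$; together with Proposition \ref{prop:7} and \eqref{eq:77} this yields $\Res_{\bfcA}=\pm\Res_{\bfcA_{J}}^{\,d_{\bfcA}}$ with $d_{\bfcA}=\deg(\pi|_{\Omega_{\bfcA}})\ge1$, whence $\log\|\Res_{\bfcA}\|_{\sup}=d_{\bfcA}\log\|\Res_{\bfcA_{J}}\|_{\sup}$. Comparing partial degrees through Proposition \ref{prop:6}, the summands with $i\notin J$ on the right-hand side vanish (Proposition \ref{prop:3}\eqref{item:17}), while for $i\in J$ one has $\MV_{M}(\Delta_{0},\dots,\widehat{\Delta_{i}},\dots,\Delta_{n})=\deg_{\bfu_{i}}(\Res_{\bfcA})=d_{\bfcA}\,\deg_{\bfu_{i}}(\Res_{\bfcA_{J}})$. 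Dividing by $d_{\bfcA}$, the statement reduces to
\begin{equation*}
  \log\|\Res_{\bfcA_{J}}\|_{\sup}\le\sum_{i\in J}\deg_{\bfu_{i}}(\Res_{\bfcA_{J}})\,\log(\#\cA_{i}),
\end{equation*}
for the essential family $\bfcA_{J}$, which spans its ambient lattice.

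This last inequality is the supremum-norm version of \cite[Theorem~1.1]{Som04}, which I would invoke to finish. The essential case is the genuine arithmetic input and the step where elementary methods fall short. One might hope to derive it from the Poisson formula (Theorem \ref{thm:3}), writing, for a dense set of systems $(f_{0},\dots,f_{n})$ with coefficients of modulus one, $\log|\Res_{\bfcA}(f_{0},\dots,f_{n})|$ as a sum of logarithms of values of directional sparse resultants plus $\sum_{\xi}m_{\xi}\log|f_{0}(\xi)|$. The first contribution can be treated by induction on $n$, using $\#\cA_{i,v}\le\#\cA_{i}$ together with the decomposition of $\MV_{M}(\Delta_{0},\dots,\widehat{\Delta_{i}},\dots,\Delta_{n})$ along the facets of $\Delta_{1}+\dots+\Delta_{n}$ that underlies the Poisson formula. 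The main obstacle is the remaining factor $\prod_{\xi}f_{0}(\xi)^{m_{\xi}}$: bounding it amounts to controlling how large the roots in $\T_{M}$ of a unit-coefficient sparse system can be, which is not implied by that identity alone and is exactly the content supplied by the height estimates for toric varieties in \cite{Som04}.
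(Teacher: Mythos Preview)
Your proof is correct and follows essentially the same strategy as the paper: reduce to the unique essential subfamily, apply the sup-norm bound from \cite{Som04} there, and scale back via $\Res_{\bfcA}=\pm\Elim_{\bfcA}^{d_{\bfcA}}$ using multiplicativity of $\|\cdot\|_{\sup}$. Two small remarks: the precise reference in \cite{Som04} for the sup-norm inequality is Lemma~1.3 rather than Theorem~1.1, and your comparison of mixed volumes via partial degrees (using $\deg_{\bfu_i}(\Res_{\bfcA})=d_{\bfcA}\deg_{\bfu_i}(\Res_{\bfcA_J})$) is a slightly cleaner way to package what the paper does through Proposition~\ref{prop:10} and Lemma~\ref{lemm:10}; the closing paragraph on the Poisson formula is commentary rather than part of the argument.
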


\begin{proof}
  We suppose that $\Res_{\bfcA}\ne1$ because otherwise, the inequality
  is trivially satisfied. Let $\bfcA_{J}$ be the unique essential
  subfamily of supports. By \cite[Lemma~1.3]{Som04},
\begin{displaymath}
  \log \|\Elim_{\bfcA_{J}}\|_{\sup}\le
  \frac{1}{[L_{\bfcA_{J}}^{\sat}:L_{\bfcA_{J}}]} \sum_{i\in
    J}\MV_{L_{\bfcA_{J}}^{\sat}}(\{\Delta_{j}-a_{j,0}\}_{j\ne i} )\log
  (\#\cA_{i}).
\end{displaymath}
Multiplying both sides of this
inequality by $\deg(\pi_{\bfcA}|_{\Omega_{\bfcA}})$ and applying
Proposition~\ref{prop:10}, it follows that
\begin{equation*}
  \log \|\Res_{\bfcA}\|_{\sup}\le  \sum_{i\in
    J}\MV_{M/L_{\bfcA_{J}}^{\sat}}(\{\varpi(\Delta_{k})\}_{k\notin
    J})\MV_{L_{\bfcA_{J}}^{\sat}}(\{\Delta_{j}-a_{j,0}\}_{j\in
    J\setminus \{i\}} )\log
  (\#\cA_{i}).
\end{equation*}
For short, write $\mu_{i}$ for the product of the two mixed volumes in the
right-hand side of this formula. By Lemma \ref{lemm:10} and Propositions~\ref{prop:6} and
\ref{prop:3}\eqref{item:17}, 
\begin{displaymath}
\MV_{M}(\Delta_{0}, \dots,
  \Delta_{i-1},\Delta_{i+1},\dots, \Delta_{n})=
  \begin{cases}
    \mu_{i}& \text{ if } i\in J,\\
0 & \text{ if } i\notin J.
  \end{cases}
\end{displaymath}
It follows that 
\begin{displaymath}
  \log \|\Res_{\bfcA}\|_{\sup}\le  \sum_{i=0}^{n}\MV_{M}(\Delta_{0}, \dots,
  \Delta_{i-1},\Delta_{i+1},\dots, \Delta_{n}) \log (\#\cA_{i}).
\end{displaymath}
The statement follows from the fact that $\h(\Res_{\bfcA})\le \log
\|\Res_{\bfcA}\|_{\sup}$, the latter being  a consequence of Cauchy's integral
formula, see  page 1255 in {\it loc. cit} for details.
\end{proof}

\section{The Poisson formula}\label{sec:poiss-form-form}

In this section, we prove the Poisson formula in Theorem \ref{thm:3}.
We also derive some of its consequences, including the formula for the
product of the roots in Corollary~\ref{cor:4}, the product formula for
the addition of supports, and the extension of the ``hidden variable''
technique to the sparse setting.

We keep the notation at \eqref{eq:98}. Furthermore, we set 
\begin{equation}\label{eq:6}
  \ov \bfcA=(\cA_{1},\dots, \cA_{n}), \quad \ov
  \Delta=\sum_{i=1}^{n}\Delta_{i} \quad \text{ and } \quad \ov
  \bfF=(F_{1},\dots, F_{n})
\end{equation}

Let $\cB\subset M$ be a nonempty finite subset and $f= \sum_{b\in
  \cB}\beta_{b}\chi^{ b}\in K[M]$ a Laurent polynomial over a field $K$ with support contained
in $\cB$. Given $v \in N_{\R}$, we set
\begin{displaymath}
  \cB_{ v }=\{  b\in \cB\mid \langle  b, v \rangle =h_{\cB}(v)\}\quad
  \text{ and } \quad f_{ v }= \sum_{ b\in
  \cB_{ v }}\beta_{ b}\chi^{ b},
\end{displaymath}
with $h_{\cB}$ the support function of $\cB$ as in \eqref{eq:18}.
We also set $\F=\Q(\bfu_{1},\dots, \bfu_{n})$.

\begin{definition} \label{def:5} Let $ v \in N\setminus
  \{0\}$ and $ v ^{\bot}\subset M_{\R}$ the orthogonal subspace. Then
  $M\cap v ^{\bot}$ is a lattice of rank $n-1$ and, for $i=1,\dots,n$,
  there exists $ b _{i, v }\in M$ such that $\cA_{i,v }- b_{i, v }
  \subset M\cap v ^{\bot}$. The \emph{sparse resultant of
  $\cA_{1},\dots, \cA_{n}$ in the direction of $ v $}, denoted by $\Res_{\cA_{1,v
    },\dots,\cA_{n, v }}$, is defined as the sparse resultant of the
  family $\cA_{i,v }- b_{i, v }$, $i=1,\dots, n$, considered as a
  family of nonempty finite subsets of  $ M\cap v ^{\bot}$.
  
  Let $F_{i}\in \F[M]$ be the general polynomial with support $\cA_{i}$
  as in \eqref{eq:24}, $i=1,\dots,n$.  For each $i$, write $F_{i,v
  }= \chi^{ b_{i, v }}G_{i, v }$ for the general Laurent polynomial
  $G_{i, v }\in \F[M\cap v ^{\bot}]$ with support $\cA_{i, v }- b_{i,
    v }$.  The expression
\begin{displaymath}
\Res_{\ov \bfcA_v}(\ov \bfF_{v})=  \Res_{\cA_{1,v },\dots,\cA_{n,v }}(F_{1,v },\dots,F_{n,v
  }) \in \Z[\bfu_{1},\dots, \bfu_{n}]
\end{displaymath}
is defined as the evaluation of this directional sparse resultant at
the coefficients of the $G_{i, v }$'s.  These constructions are
independent of the choice of the $ b_{i, v }$'s.
\end{definition}

By Proposition \ref{prop:3}\eqref{item:16}, we have that
$\Res_{\cA_{1,v },\dots,\cA_{n,v }}\ne 1$ only if $ v $ is an
inner normal to a face of $\ov \Delta$ of dimension $n-1$.  In
particular, the number of non-trivial directional sparse resultants of
the family $\ov \bfcA$ is finite.

We first prove the following  Poisson formula for the
general Laurent polynomials.

\begin{theorem} \label{thm:1} Let notation be as in
  \eqref{eq:6}. Then
 \begin{equation}\label{poiss}
  \Res_{\bfcA}(\bfF)= \pm
  \prod_{v}\Res_{\ov \bfcA_{v}}(\ov \bfF_{v})^{-h_{\cA_{0}}(v)}
  \cdot \prod_{\xi} F_{0}(\xi)^{m_{\xi}} ,
\end{equation}
the first product being over the primitive vectors $v\in N$ and the
second  over the roots $\xi\in \T_{M,\ov \F}$ of $F_{1},\dots,
F_{n}$, and where  $m_\xi$ denotes the multiplicity of $\xi$ as in
\eqref{eq:86}.
\end{theorem}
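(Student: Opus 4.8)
The plan is to work on a proper normal toric variety $X_\Sigma$ dominating $X_{\bfcA}$ (Lemma~\ref{lemm:11}), pull everything back there, and compute the resultant by iterated intersection with the Cartier divisors $\div(F_i)$, using the fundamental property that resultants translate proper intersection into evaluation (\cite[Proposition~1.40]{DKS2011:hvmsan}). Concretely, by Proposition~\ref{prop:1} we have $\Res_{\bfcA}(\bfF)=\pm\Res_{\bfe_0,\dots,\bfe_n}(Z_{\bfcA})(\bfF)$. I would first handle the degenerate case $\dim(\Delta)<n$: then $Z_{\bfcA}=0$, both sides are constant, and by Proposition~\ref{prop:8} applied to the subfamily $\ov\bfF$ the system $F_1=\dots=F_n=0$ has no roots in $\T_{M,\ov\F}$ (the essential subfamily cannot be $\ov\bfcA$), so the formula reduces to $\pm1=\pm1$ once one checks the directional factors also collapse. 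So assume $\dim(\Delta)=n$, hence $\Sigma_\Delta$ is a fan; take $\Sigma=\Sigma_\Delta$ and the morphism $\Phi_{\bfcA}\colon X_{\Sigma}\to\P^{\bfc}$ with $Z_{\bfcA}=(\Phi_{\bfcA})_*X_{\Sigma}$.

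The key computation is: first intersect $Z_{\bfcA}$ (or rather $X_\Sigma$, via the projection formula) successively with $\div(F_1),\dots,\div(F_n)$. Because $F_1,\dots,F_n$ are \emph{general} with supports $\cA_1,\dots,\cA_n$, and the coefficients live in $\F=\Q(\bfu_1,\dots,\bfu_n)$, Proposition~\ref{prop:8} guarantees that the intersection $\T_{M,\ov\F}\cdot\div(F_1)\cdots\div(F_n)=Z(F_1,\dots,F_n)=\sum_\xi m_\xi\,\xi$ is a finite set of points in the torus with $\val(\xi)=0$. But on $X_\Sigma$ the intersection $X_\Sigma\cdot\prod_{i=1}^n D_i$ (with $D_i=\Phi_{\bfcA}^*\div(L_i)$, restricting to $\div(F_i)$ on the torus) may \emph{also} have components on the toric boundary; by Proposition~\ref{prop:11}, intersecting the toric cycle with the torus-invariant divisors governed by $x_{0,j}$ picks out exactly the directional contributions $Z_{\bfcA,\Gamma}$ weighted by $-h_{\Delta_0-a_{0,j}}(v(\Gamma))$. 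The strategy is to use the resultant-evaluation identity to turn $\Res_{\bfe_0,\dots,\bfe_n}(Z_{\bfcA})(\bfF_0,\bfF)$ — where I keep $\bfF_0=\bfu_0$ generic in the first slot — into $\Res_{\bfe_0}\bigl(Z_{\bfcA}\cdot\div(F_1)\cdots\div(F_n)\bigr)(\bfu_0)$, i.e. the resultant of a $0$-dimensional cycle in $\P^{c_0}$, which is simply $\pm\prod$(linear forms $L_0$ evaluated at the points of the cycle). The torus points contribute $\prod_\xi F_0(\xi)^{m_\xi}$; the boundary points, lying in the orbit closures $V(\tau)$, force certain coordinates $x_{0,j}=0$, and by Lemma~\ref{lemm:1} and the structure of $\varphi_{\bfcA,\Gamma}$ the value of $L_0=\sum_j u_{0,j}x_{0,j}$ on such a point is, up to the monomial reparametrization, the directional general polynomial $G_{0,v(\Gamma)}$ evaluated on the roots of the directional system $G_{1,v(\Gamma)},\dots,G_{n,v(\Gamma)}$ — that is, each facet $\Gamma$ with inner normal $v=v(\Gamma)$ contributes a factor $\Res_{\ov\bfcA_v}(\ov\bfF_v)$, and the exponent with which it appears is exactly the coefficient $-h_{\cA_0}(v)$ coming from Proposition~\ref{prop:11} applied repeatedly. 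Assembling these pieces and then specializing $\bfu_0$ to the coefficients of $F_0$ yields \eqref{poiss} up to a unit of $\Z$, i.e. up to a sign.

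To carry this out cleanly I would organize the intersection as a two-stage process: (i) intersect with $\div(F_1),\dots,\div(F_n)$ on $X_\Sigma$, recording via Smirnov/Bernstein (Proposition~\ref{prop:8}) that the torus part is $\sum_\xi m_\xi\xi$ and, via Proposition~\ref{prop:11} and induction, that the boundary part is a combination of the $(n{-}1)$-dimensional directional cycles $Z_{\ov\bfcA,\Gamma}$ pushed into $\P^{\bfc}$; (ii) intersect the result with $\div(F_0)$, which on the torus part gives $\prod_\xi F_0(\xi)^{m_\xi}$ and on each boundary component $Z_{\ov\bfcA,\Gamma}$ gives, by the very \emph{definition} of the directional sparse resultant (Definition~\ref{def:5}) together with Proposition~\ref{prop:1} applied in the lattice $M\cap v^\perp$, the factor $\Res_{\ov\bfcA_v}(\ov\bfF_v)$ raised to the appropriate power. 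The exponent bookkeeping is where I expect the main obstacle: one must show that after all $n$ torus-direction intersections the boundary cycle $Z_{\bfcA}\cdot\div(F_1)\cdots\div(F_n)$ restricted over a facet $\Gamma$ of $\Delta$ with normal $v$ is precisely $Z_{\ov\bfcA,\Gamma}$ with multiplicity one (not some mixed volume), and that the linear form $L_0$ restricted there matches $h_{\cA_0}(v)$ copies of the directional resultant — this requires carefully tracking how $h_{\Delta_0-a_{0,j}}(v)=h_{\cA_0}(v)-\langle a_{0,j},v\rangle$ interacts with the homogenization, and invoking that the directional intersection $Z_{\ov\bfcA,\Gamma}\cdot\div(F_0)$ equals the directional root cycle times a power of $\Res_{\ov\bfcA_v}(\ov\bfF_v)$ dictated by Proposition~\ref{prop:11} in the lower-dimensional toric variety. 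The classical Bernstein theorem and its valued-field refinement by Smirnov are exactly what is needed to guarantee that no roots escape to the boundary in the torus-direction intersections and that the counts are the mixed volumes predicted by Proposition~\ref{prop:6}, closing the degree bookkeeping and forcing the claimed equality up to sign.
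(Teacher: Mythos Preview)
Your overall strategy --- identify $\Res_{\bfcA}$ with $\Res_{\bfe_0,\dots,\bfe_n}(Z_{\bfcA})$ via Proposition~\ref{prop:1} and then peel off the $\div(L_i)$ using \cite[Proposition~1.40]{DKS2011:hvmsan} --- is how the paper begins, but your execution of the core step has a genuine gap.

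The problem is your step (i). When you intersect $X_\Sigma$ (over $\F$) with the general divisors $D_1,\dots,D_n$, the resulting $0$-cycle has \emph{no boundary part}: on each orbit closure $V(\tau)$ of codimension one, the restriction of $D_i$ is cut out by $F_{i,v}$, so a boundary point would be a solution of $n$ general equations on an $(n{-}1)$-dimensional torus, which does not exist over~$\F$. Thus $X_\Sigma\cdot D_1\cdots D_n=\sum_\xi m_\xi\,\xi$ sits entirely in $\T_M$, and there is nothing on the boundary for step (ii) to pick up. The directional resultants are not visible as intersection points; they are hidden in the unit $\lambda_2\in\F^\times$ of the identity $\Res_{\bfcA}=\lambda_2\prod_\xi F_0(\xi)^{m_\xi}$, because \cite[Proposition~1.40]{DKS2011:hvmsan} determines things only up to a unit of the coefficient ring, which here is $\F$, not $\Z$. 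Relatedly, Proposition~\ref{prop:11} computes $Z_{\bfcA}\cdot\div(x_{i,j})$ for a \emph{coordinate} divisor, not for a general linear form $L_i$, so your repeated invocation of it for the $D_i$'s is not justified. The paper's trick to extract $\lambda_2$ is quite different from what you propose: assuming first $a_{0,0}=0$, one specializes $F_0\mapsto 1$, so that $\prod_\xi F_0(\xi)^{m_\xi}=1$ and $\lambda_2=\Res_{\bfcA}(1,\ov\bfF)$; this is computed as $\nu\cdot\Res_{\bfe_1,\dots,\bfe_n}(Z_{\bfcA}\cdot\div(x_{0,0}))$, and now Proposition~\ref{prop:11} applies directly to the coordinate divisor $\div(x_{0,0})$, yielding the sum $\sum_\Gamma -h_{\cA_0}(v(\Gamma))\,Z_{\bfcA,\Gamma}$ and hence the directional factors with the correct exponents.

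A second gap: even after this, one only has the formula up to $\nu_2\in\Q^\times$, and your sentence ``up to a unit of $\Z$, i.e.\ up to a sign'' is unjustified. The paper closes this by a $p$-adic argument: extend $\ord_p$ from $\Q$ to $\Q(\bfu)$ as in~\eqref{eq:103}, use Proposition~\ref{prop:8} to see $\ord_p(\xi)=0$ for every root and hence $\ord_p\bigl(\prod_\xi F_0(\xi)^{m_\xi}\bigr)=0$, and conclude $\ord_p(\nu_2)=0$ for every prime $p$. Finally, the case of arbitrary $a_{0,0}$ is not automatic: the paper derives a product-of-roots identity (formula~\eqref{eq:2}) from the $a_{0,0}=0$ case applied to $\cA_0=\{0,a\}$ and its translate, and bootstraps. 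Your plan does not address either of these reductions.
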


\begin{proof} First suppose that $\dim(\Delta)\le n-1$.  By
  Proposition \ref{prop:3}\eqref{item:16}, the sparse resultant in the
  left-hand side of \eqref{poiss} is 1. Since $\dim(\ov \Delta)\le
  \dim(\Delta)\le n-1$, the family $\ov \bfF$ has no roots and so the
  second product in the right-hand side is also 1.  When $\dim(\ov
  \Delta)=n-2$, Proposition \ref{prop:3}\eqref{item:16} also implies
  that all directional sparse resultants of $\ov \bfcA$ in the first
  product of \eqref{poiss} are equal to 1. When $\dim(\ov
  \Delta)=n-1$, there are two directional sparse resultants which
  might be nontrivial, corresponding to a primitive normal vector of
  $\ov \Delta$ and its opposite. Both directional sparse resultants
  coincide, but they appear with opposite exponents in the first
  product of \eqref{poiss}. In all these cases, the formula reduces to
  the equality
  \begin{math}
1=\pm1.  
  \end{math}

  From now on, we assume that $\dim(\Delta)=n$.  Let $Z_{\bfcA}$ be
  the multiprojective toric cycle in \eqref{eq:27}. This cycle is
  defined over $\Q$ and so it can be considered as a cycle of
  $\P_{\Q}^{\bfn}$. Let 
  \begin{math}
    Z_{\bfcA,\F}=Z_{\bfcA}\times \Spec(\F)
  \end{math}
  be the cycle on $\P^{\bfn}_{\F}$ induced by the base change
  $\Q\hookrightarrow \F$. Consider the linear forms
  $L_{i}=\sum_{j=0}^{c_{i}} u_{i,j} x_{i,j} \in \F [\bfx]$,
  $i=1,\dots, n$, and set $\div(L_{i})$ for the corresponding Cartier
  divisor on $\P_{\F}^{\bfc}$. These Cartier divisors intersect
  $Z_{\bfcA,\F}$ properly, and applying \cite[Propositions 1.28 and
  1.40 and Corollary 1.38]{DKS2011:hvmsan} we deduce that
\begin{equation}\label{eq:10}
  \Res_{\bfe_{0},\dots, \bfe_{n}}(Z_{\bfcA})= \lambda_{1}
  \Res_{\bfe_{0}}\Big(Z_{\bfcA,\F}\cdot \prod_{i=1}^{n}\div(L_{i})\Big)= \lambda_{2}
  \prod_{\xi} F_{0}(\xi)^{m_{\xi}} 
\end{equation}
with $\lambda_{i}\in \F^{\times}$, the product in the right-hand side
being as in \eqref{poiss}.

Suppose for the moment that $a_{0,0}=0$. Then, by evaluating
\eqref{eq:10} at $F_{0}=1,$ we obtain that $\lambda_{2}=
\Res_{\bfe_{0},\dots, \bfe_{n}}(Z_{\bfcA})(1,\ov \bfF)$.  By
\cite[Propositions 1.40]{DKS2011:hvmsan} and
Proposition~\ref{prop:11}, there exist $\nu_{i} \in \Q^{\times}$ such
that
\begin{multline}\label{eq:17}
 \Res_{\bfe_{0},\dots, \bfe_{n}}(Z_{\bfcA})(1,\ov \bfF)= \nu_{1} \Res_{\bfe_{1}, \dots, \bfe_{n}} (Z_{\bfcA}\cdot
  \div(x_{0,0})) \\= \nu_{2} \prod_{\Gamma}\Res_{\bfe_{1}, \dots,
    \bfe_{n}} (Z_{\bfcA, {\Gamma}})(\ov \bfF_{v({\Gamma})})^{-h_{\cA_{0}}(v({\Gamma}))},
\end{multline}
the product being over the facets $\Gamma$ of $\Delta,$ and where
$v({\Gamma})$ denotes the primitive inner normal vector of
$\Gamma$.  By Proposition \ref{prop:1}, $\Res_{\bfe_{0},\dots,
  \bfe_{n}}(Z_{\bfcA})= \Res_{\bfcA}$ and, for each facet $\Gamma$, 
  \begin{equation}
    \label{eq:7}
\Res_{\bfe_{1}, \dots,
    \bfe_{n}} (Z_{\bfcA, {\Gamma}})=  \Res_{\ov \bfcA_{v({\Gamma})}}.
  \end{equation}
By Proposition \ref{prop:3}\eqref{item:3}, $
  \Res_{\ov \bfcA_{v}}=1$ for
  every primitive vector $v\in N$ which is not an inner normal to a
  facet of $\Delta$.

From \eqref{eq:10}, \eqref{eq:17} and \eqref{eq:7}, it follows that
\begin{equation} \label{eq:43}
\Res_{\bfcA}  = \nu_{2}
  \prod_{v}\Res_{\ov \bfcA_{v}}(\bfF_{v})^{-h_{\cA_{0}}(v)} \cdot \prod_{\xi} F_{0}(\xi)^{m_{\xi}} 
\end{equation}
with $\nu_{2}\in \Q^{\times}$, the product being over the primitive
vectors $v\in N$. 

To prove that $\nu_{2}$ is actually equal to $\pm1$, we will show its
$p$-adic valuation is zero for every prime $p$ of $\Z$. To do this,
let $p$ be such a prime and consider the $p$-adic valuation $\ord_{p}$
on $\Q$. We extend this valuation to the field
$\F(\bfu_{0})=\Q(\bfu_{0},\bfu_{1},\dots, \bfu_{n})$ as in
\eqref{eq:103}, and we also denote it by $\ord_{p}$. By
Proposition~\ref{prop:8}, $\ord_{p}(\xi)=0$ for every root $\xi\in
\T_{M,\ov{\F(\bfu_{0})}}$ of $F_{1},\dots, F_{n}$. Hence
\begin{displaymath}
\ord_{p}\bigg(   \prod_{\xi} F_{0}(\xi)^{m_{\xi}}\bigg)=0.
\end{displaymath}
It follows that $\ord_{p}(\nu_{2})=0$. Since this holds for every $p$, we deduce that
$\nu_{2}=\pm1$, which proves the theorem for the case when $a_{0,0}=0$.

%

In particular, let $a\in M$ and set $\cA_{0}=\{0,a\},$ and
$-\cA_{0}=\{0,-a\}$. Note that $-\cA_{0}$ is the translate of
$\cA_{0}$ by the point $-a$. By Proposition \ref{prop:5},
\begin{displaymath}
  \Res_{\cA_{0},\ov\bfcA}(u_{0,0}+u_{0,1}\chi^{a},
  \ov \bfF)
=\pm\Res_{-\cA_{0},\ov\bfcA}(u_{0,0}\chi^{-a}+u_{0,1}, \ov \bfF) .
\end{displaymath}
Since both $\cA_{0}$ and $-\cA_{0}$ contain 0, we can apply the
previous case to both presentations of this sparse resultant to deduce that
\begin{multline*}
  \prod_{v}\Res_{\ov \bfcA_{v}}(\ov \bfF_{v})^{-\min(0,\langle a,v\rangle)}
  \cdot \prod_{\xi} (u_{0,0}+u_{0,1}\chi^{a}(\xi))^{m_{\xi}}\\=
\pm   \prod_{v}\Res_{\ov \bfcA_{v}}(\ov \bfF_{v})^{-\min(0,\langle -a,v\rangle)}
  \cdot \prod_{\xi} (u_{0,0}\chi^{-a}(\xi)+u_{0,1})^{m_{\xi}}.
\end{multline*}
Using that $\min(0,\langle a,v\rangle)-\min(0,-\langle a,v\rangle)=
\langle a,v\rangle$, we deduce from here that
\begin{equation}
  \label{eq:2}
\prod_{\xi} \chi^{a }(\xi)^{m_{\xi}}= \pm
  \prod_{v}\Res_{\ov \bfcA_{v}}(\ov \bfF_{v})^{\langle a,v\rangle}.
\end{equation}

Now we consider the general case when $a_{0,0}$ is an arbitrary
element of $M$. Applying Proposition~\ref{prop:5}, the formula for the
case when $a_{0,0}=0$, and \eqref{eq:2}, we get
\begin{align*}
\Res_{\cA_{0}, \ov \bfcA}(F_{0}, \ov \bfF)&=
\Res_{\cA_{0}-a_{0,0},\ov \bfcA}(\chi^{-a_{0,0}}F_{0},
 \ov \bfF)\\ &= 
\pm
  \prod_{v}\Res_{\ov \bfcA_{v}}(\ov \bfF_{v})^{\langle
    a_{0,0},v\rangle -h_{\cA_{0}}(v)}
  \cdot \prod_{\xi} (\chi^{a_{0,0}}(\xi)F_{0}(\xi))^{m_{\xi}}\\ &= 
\pm
  \prod_{v}\Res_{\ov \bfcA_{v}}(\ov \bfF_{v})^{-h_{\cA_{0}}(v)}
  \cdot \prod_{\xi} F_{0}(\xi)^{m_{\xi}},
\end{align*}
completing the proof. 
\end{proof}

\begin{remark}\label{rem:4}
  Let notation be as in Theorem \ref{thm:1}. By the structure theorem
  for Artin rings, there is a decomposition into local Artin rings
\begin{equation*}
\F[M]/(F_{1},\dots,F_{n})  =\bigoplus_{\xi}A_{\xi},
\end{equation*}
where the direct sum is over the roots $\xi$ of the family $F_{i}$,
$i=1,\dots, n$. Each local Artin ring $A_{\xi}$ is a $\F$-algebra of
dimension $m_{\xi}$. Hence
\begin{equation}\label{eq:78}
  \deg(Z(F_{1},\dots, F_{n}))=\sum_{\xi}m_{\xi},\quad \text{ and } \quad \prod_{\xi}
  F_{0}(\xi)^{m_{\xi}}= \norm_{S/\F}(F_{0}),
\end{equation}
with $S=\F[M]/(F_{1},\dots,F_{n})$, and where $ \norm_{S/\F}(F_{0})$
denotes the norm of $F_{0}$ as an element of this $\F$-algebra that
is, the determinant of the $\F$-linear endomorphism of $S$ defined by
the multiplication by $F_{0}$.
\end{remark}

We now study the genericity conditions allowing to specialize the
Poisson formula~\eqref{poiss}.

\begin{lemma} \label{lemm:4} Let $f_{i},g_{i}\in \C[M]$, $i=1,\dots,
  n$, such that $V(f_{1},\dots, f_{n})\subset \T_{M}$ has dimension
  0. Let $t$ be a variable and consider the ideal
  \begin{equation*}
 I= (f_{1}+tg_{1},\dots,
  f_{n}+tg_{n})\subset \C[t][M]. 
  \end{equation*}
  Then $t$ is not a zero divisor modulo $I$.
\end{lemma}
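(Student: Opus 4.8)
The plan is to work in the ring $R := \C[t][M] = \C[t, t_1^{\pm 1}, \dots, t_n^{\pm 1}]$, which is a regular (hence Cohen--Macaulay) domain of dimension $n+1$, and to prove the statement by controlling the associated primes of $R/I$. Writing $F_i := f_i + t g_i$, so that $I = (F_1, \dots, F_n)$, the set of zero divisors modulo $I$ is the union of the associated primes of $R/I$; thus it suffices to show that $t$ lies in none of them.

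The first point is that $I$ has height exactly $n$, i.e.\ $\dim R/I = 1$. The inequality $\dim R/I \ge 1$ is Krull's height theorem, since $I$ is generated by $n$ elements in a ring of dimension $n+1$. For the reverse inequality, note that setting $t = 0$ gives $R/(I + (t)) \cong \C[M]/(f_1, \dots, f_n)$, which is $0$-dimensional by hypothesis; hence, for any irreducible component $C$ of $V(I) \subset \Spec R$, either $C \subseteq V(t)$ --- in which case $C$ is contained in $V(f_1, \dots, f_n)$ and so $\dim C = 0$ --- or else $t$ restricts to a nonzero non-unit on $C$, so that $V(t) \cap C$ has pure codimension $1$ in $C$ by Krull's theorem while still lying in $V(f_1, \dots, f_n)$, which forces $\dim C - 1 \le 0$. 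In both cases $\dim C \le 1$, and therefore $\dim R/I = 1$.

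Since $R$ is a Cohen--Macaulay domain and $I$ is generated by $n$ elements with $\dim R/I = \dim R - n$, the generators $F_1, \dots, F_n$ form a regular sequence in $R$, and consequently $R/I$ is Cohen--Macaulay of dimension $1$. In particular $R/I$ is unmixed: all of its associated primes are minimal, hence are the ideals of the $1$-dimensional irreducible components of $V(I)$. No such component can contain $t$, for otherwise it would be contained in $V(I) \cap V(t) = V(f_1, \dots, f_n)$, of dimension $0$, contradicting that it has dimension $1$. Hence $t$ belongs to no associated prime of $R/I$ and is therefore a nonzero divisor modulo $I$, as claimed. The only genuine input is the $0$-dimensionality of $V(f_1, \dots, f_n)$; the delicate point worth stressing is that one must rule out \emph{embedded} associated primes of $R/I$ supported inside $\{t = 0\}$ --- which is exactly what the complete-intersection, Cohen--Macaulay structure of $R/I$ provides --- rather than merely inspecting the irreducible components of $V(I)$. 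A secondary point is invoking the correct commutative-algebra facts (a system of the expected number of generators of an ideal of the expected height in a Cohen--Macaulay domain is a regular sequence, and the quotient of a Cohen--Macaulay ring by a regular sequence is again Cohen--Macaulay, hence unmixed).
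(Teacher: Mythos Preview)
Your overall strategy is the right one, and it is essentially the paper's, but there is a genuine gap in the second paragraph. Your dichotomy ``either $C\subseteq V(t)$, or $t$ restricts to a nonzero non-unit on $C$'' is incomplete: you omit the case $C\cap V(t)=\emptyset$, in which $t$ is a \emph{unit} on $C$ and your Krull argument gives no bound on $\dim C$. This case actually occurs. Take $n=2$, $f_{1}=t_{1}-1$, $f_{2}=t_{2}-1$ (so $V(f_{1},f_{2})=\{(1,1)\}$ is $0$-dimensional) and $g_{i}=t_{1}f_{i}$. Then $F_{i}=f_{i}(1+tt_{1})$, and $V(I)$ has the $2$-dimensional component $V(1+tt_{1})$, disjoint from $V(t)$. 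Here $\operatorname{ht}(I)=1$, $F_{1},F_{2}$ is \emph{not} a regular sequence (indeed $(t_{1}-1)F_{2}=(t_{2}-1)F_{1}$ with $t_{1}-1\notin(F_{1})$), and $R/I$ is not Cohen--Macaulay. So the global unmixedness you invoke is false in general.

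The fix is to argue locally, which is what the paper does. Components $C$ with $C\cap V(t)=\emptyset$ are harmless: their primes do not contain $t$. For any maximal ideal $P\supseteq(I,t)$ one has $R_{P}/(I_{P}+(t))\cong(\C[M]/(f_{1},\dots,f_{n}))_{\bar P}$, which is Artinian, so $\dim R_{P}/I_{P}\le 1$; combined with Krull this gives $\dim R_{P}/I_{P}=1$, hence $F_{1},\dots,F_{n}$ is a regular sequence in the regular local ring $R_{P}$ and $(R/I)_{P}$ is Cohen--Macaulay of dimension $1$. Thus $(R/I)_{P}$ has no embedded primes, and its unique minimal prime (of height $n$) cannot be $P$ (of height $n+1$), so $t\notin$ any associated prime of $(R/I)_{P}$. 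Since this holds for every $P\supseteq(I,t)$, $t$ is a nonzerodivisor modulo $I$. In the paper's language: there is an open neighbourhood of $V(t)$ on which $V(I)$ is a complete intersection, so $I$ has no embedded components supported there, and no minimal prime of $I$ contains $t$ by the dimension count.
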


\begin{proof}
  Let $V(I)$ be the subvariety of $\T_{M}\times \A^{1}$ defined by
  $I$. This ideal is generated by $n$ elements and so, as a
  consequence of Krull's Hauptidealsatz, all irreducible components of
  $V(I)$ have dimension $\ge1$.

We have that $I+(t)= (f_{1},\dots, f_{n},t)$ and so $V(I)\cap V(t)$ is
0-dimensional. This implies that, if $W$ is an irreducible component
of $V(I)$ such that $W\cap V(t)\ne \emptyset$, then $\dim(W)=1$.  Hence,
there is an open subset $U\subset \T_{M}\times \A^{1}$ containing the
hyperplane $ V(t)$ where the family
$f_{i}+tg_{i}$, $i=1,\dots, n$, forms a complete intersection. In
particular,  $I$ has no embedded components supported on
$U$. We conclude that $t$ does not belong to any of the associated
prime ideals of $I$ and so it is not a zero divisor modulo $I$.
\end{proof}

  \begin{lemma} \label{lemm:5} Let $f_{i}\in \C[M]$ with
support contained in     $\cA_{i}$ and $F_{i}$ the general Laurent
    polynomial with support $\cA_{i}$ as in \eqref{eq:24}, $i=0,\dots,
    n$. Set $D=\MV_{M}(\Delta_{1},\dots, \Delta_{n})$ and consider the quotient
    algebras
  \begin{equation*}
R=\C[M]/(f_{1},\dots, f_{n}), \quad     S=\C\otimes \F(t)[M]/(f_{1}+tF_{1},\dots,
  f_{n}+tF_{n}).
  \end{equation*}
  Suppose that $\dim_{\C}(R)=D$ and let $g_{k}\in \C[M]$, $k=1,\dots,
  D$, giving a basis of $R$ over~$\C$. Then,
\begin{enumerate}
\item \label{item:1} $\dim_{\C\otimes \F(t)}(S)= D$ and $g_{k}$, $k=1,\dots, D$,
  is a basis of $S$ over $\F(t)$;
\item \label{item:2} 
  \begin{math}
\norm_{S/\C\otimes \F(t)} (f_{0}+tF_{0})\big|_{t=0} =\norm_{R/\C}(f_{0}).
  \end{math}
\end{enumerate}
\end{lemma}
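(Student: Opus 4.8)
The plan is to set up a flat family over the affine line $\Spec \C[t]$ and use the fact that the Hilbert function/$\C[t]$-module structure is well-behaved along it. First I would introduce the ideal $J=(f_{1}+tF_{1},\dots, f_{n}+tF_{n})$ inside the polynomial ring $\C\otimes\F[t][M]$, where $\F=\Q(\bfu_{1},\dots,\bfu_{n})$, noting that the $F_{i}$ are themselves general Laurent polynomials whose coefficients are the $\bfu_{i}$-variables, so the $f_{i}+tF_{i}$ are Laurent polynomials over the field $\C\otimes\F(t)$ with support contained in $\cA_{i}$. Localizing at $t=\infty$ (i.e.\ inverting $t$ and setting $s=t^{-1}$, so that $f_{i}+tF_{i}=t(sf_{i}+F_{i})$) shows the fibre over the generic point, after the linear change of coordinates absorbing the unit $t$, is cut out by the \emph{general} family $sf_{i}+F_{i}$, whose support is $\cA_{i}$ and whose coefficients are generic. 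By Proposition \ref{prop:8} (applied over the field $\C(s)$, or directly by Bernstein's theorem \eqref{eq:46} together with Proposition \ref{prop:3}\eqref{item:16} applied to $\ov\bfcA$), the zero set is finite of cardinality $D=\MV_{M}(\Delta_{1},\dots,\Delta_{n})$ counted with multiplicity, and in fact reduced, so $\dim_{\C\otimes\F(t)}(S)=D$. Meanwhile the fibre over $t=0$ is $V(f_{1},\dots,f_{n})$, which by hypothesis has $\dim_{\C}(R)=D$ as well.

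Next I would argue flatness. Apply Lemma \ref{lemm:4}: since $V(f_{1},\dots,f_{n})$ is $0$-dimensional, $t$ is a nonzerodivisor modulo $J$, hence the $\C\otimes\F[t][M]/J$-module, or rather its natural $\C[t]$-submodule structure, is torsion-free over $\C[t]$. More precisely I would work with the $\C\otimes\F[t]_{(t)}$-algebra $B=(\C\otimes\F[t][M]/J)\otimes \C\otimes\F[t]_{(t)}$: the generic fibre $B\otimes\C\otimes\F(t)$ has dimension $D$ and the special fibre $B/tB$ surjects onto $R\otimes_{\C}(\C\otimes\F)$, which has dimension $D$. A finitely generated torsion-free — hence flat, hence free — module over the DVR $\C\otimes\F[t]_{(t)}$ whose generic rank equals $D$ and whose special fibre has dimension $\le D$ must be free of rank exactly $D$, and the special fibre has dimension exactly $D$. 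Then I would invoke Nakayama: the images of $g_{1},\dots,g_{D}$ span $B/tB$ over $\C\otimes\F$ (because they span $R$ over $\C$ and the natural map $R\otimes_{\C}(\C\otimes\F)\to B/tB$ is an isomorphism by the rank count), so by Nakayama's lemma they generate $B$ over $\C\otimes\F[t]_{(t)}$, and being $D$ elements generating a free module of rank $D$ they form a basis. Tensoring with $\C\otimes\F(t)$ gives part \eqref{item:1}.

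For part \eqref{item:2}, the norm is the determinant of the multiplication-by-$(f_{0}+tF_{0})$ endomorphism of $S$. With respect to the basis $g_{1},\dots,g_{D}$, this endomorphism preserves the free $\C\otimes\F[t]_{(t)}$-lattice $B$ (the multiplier $f_{0}+tF_{0}$ lies in $\C\otimes\F[t][M]$), so it is represented by a matrix $\mathcal M(t)$ with entries in $\C\otimes\F[t]_{(t)}$. Its determinant $\norm_{S/\C\otimes\F(t)}(f_{0}+tF_{0})$ therefore lies in $\C\otimes\F[t]_{(t)}$ and may be evaluated at $t=0$; reduction modulo $t$ commutes with taking determinants, and $\mathcal M(0)$ is precisely the matrix of multiplication by $f_{0}$ on $B/tB\simeq R\otimes_{\C}(\C\otimes\F)$ in the basis $g_{1},\dots,g_{D}$. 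Hence $\norm_{S/\C\otimes\F(t)}(f_{0}+tF_{0})\big|_{t=0}=\det\mathcal M(0)=\norm_{R\otimes\C\otimes\F/\C\otimes\F}(f_{0})=\norm_{R/\C}(f_{0})$, the last equality because the norm is unchanged under the base extension $\C\hookrightarrow\C\otimes\F$.

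The main obstacle I anticipate is the flatness/freeness bookkeeping over the DVR $\C\otimes\F[t]_{(t)}$ — specifically, verifying cleanly that the special fibre of $B$ is $R\otimes_\C(\C\otimes\F)$ with the right dimension, rather than something smaller (which Lemma \ref{lemm:4} and the torsion-freeness are exactly designed to prevent) and keeping straight the two base fields $\C\otimes\F(t)$ and $\C$. Once the module $B$ is shown to be free of rank $D$ over the DVR with the identified special and generic fibres, both statements follow from standard linear algebra.
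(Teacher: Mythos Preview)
Your plan is essentially right and rests on the same pivot as the paper --- Lemma~\ref{lemm:4} gives that $t$ is a nonzerodivisor, hence $B$ is torsion-free over the DVR $A=(\C\otimes\F)[t]_{(t)}$, and then you want to compare the two fibres. One step deserves a closer look: you invoke Nakayama to pass from ``$g_1,\dots,g_D$ span $B/tB$'' to ``they generate $B$'', but Nakayama requires $B$ to be \emph{finitely generated} over $A$ to begin with, and you have not established that. A priori $B$ is only a finitely generated $A$-\emph{algebra}; nothing yet rules out horizontal branches of $V(J)$ that run off to the boundary of $\T_{M}$ as $t\to 0$ and keep $B$ from being module-finite.

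The paper sidesteps this by arguing linear \emph{independence} rather than spanning: if $\sum_{k}\gamma_{k}g_{k}=0$ in $S$ with $\gamma_{k}\in\C\otimes\F(t)$, clear denominators to get a relation in $(\C\otimes\F)[t][M]/J$; since $t$ is a nonzerodivisor there, divide out the common power of $t$ so that $t\nmid\gcd(\gamma_{k})$; reduce modulo $t$ to obtain a nontrivial $\C$-relation among the $g_{k}$ in $R$, a contradiction. Since $\dim_{\C\otimes\F(t)}S=D$, independence gives a basis. Note that the very same trick, applied to the expansion of an arbitrary $b\in B$ in the $K$-basis $(g_{k})$ of $S$, shows all coefficients lie in $A$ --- so in fact $B=\sum_{k}A\,g_{k}$ is free of rank $D$, retroactively justifying your freeness claim and your part~\eqref{item:2} argument. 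For part~\eqref{item:2} your formulation (the lattice $B$ is preserved, so the matrix has entries in $A$ and one may reduce modulo $t$) is exactly what the paper does, only phrased more structurally.
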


\begin{proof}
  We first prove \eqref{item:1}. Set $\L=\C\otimes \F$ for short. The family $f_{i}+tF_{i}$,
  $i=1,\dots, n$, verifies the hypothesis  of
  Bernstein's theorem in~\eqref{eq:46}. Then $V(f_{1}+tF_{1}, \dots,
  f_{n}+tF_{n})$ is of dimension 0 and, by~\eqref{eq:78}, 
\begin{displaymath}
  \dim_{\L(t)}(S)=\deg(Z(f_{1}+tF_{1}, \dots, f_{n}+tF_{n}))=D=\dim_{\C}(R). 
\end{displaymath}
Hence, to prove that the $g_{k}$'s form a basis of $S$ over $\L(t)$, it
is enough to show that they are linearly independent. Suppose that
this is not the case and take a nontrivial linear combination
\begin{equation} \label{eq:5}
  \sum_{l=1}^{D} \gamma_{l}g_{l} =0  \quad \text{ on } S
\end{equation}
with $\gamma_{l}\in \L(t)$, not all of them simultaneously zero.  Set
$I\subset \L[t][M]$ for the ideal generated in this ring by the family
$f_{i}+tF_{i}$, $i=1,\dots, n$.  Multiplying \eqref{eq:5} by a
suitable denominator in $\L[t]\setminus \{0\}$, we can assume without
loss of generality that $\gamma_{l}\in \C[\bfu_{1},\dots,
\bfu_{n}][t]$ and that $ \sum_{l=1}^{D} \gamma_{l}g_{l}\in I$.
Moreover, by Lemma \ref{lemm:4}, the variable $t$ is not a zero
divisor modulo $I$ and so we can also assume that $t\nmid
\gcd_{\L[t]}(\gamma_{1},\dots, \gamma_{D})$.

We then obtain a nontrivial linear combination over $\C$ for the
$g_{k}$'s by specializing~\eqref{eq:5} at $t=0$ and taking any nonzero
coefficient in the expansion with respect to the variables
$\bfu_{i}$. This contradicts our assumption and hence it follows that the
$g_{k}$'s form a basis of $S$ over~$\L(t)$, which proves  \eqref{item:1}.

Now we turn to \eqref{item:2}. For $j=0,\dots, c_{0}$ and  $k=1,\dots, D$
write
  \begin{equation}\label{eq:47}
\chi^{a_{0,j}}g_{k}=\sum_{l=1}^{D}p_{j,k,l}\, g_{l} \in R\quad \text{ and
}\quad    \chi^{a_{0,j}}g_{k}= \sum_{l=1}^{D}P_{j,k,l} \, g_{l} \in  S
  \end{equation}
  with $p_{j,k,l}\in \C$ and $P_{j,k,l}\in \L(t)$.  
Using the  fact that $t$
is not a zero divisor modulo $I$, we can deduce that none of the $P_{j,k,l}$'s has a
pole at $t=0$ and that $\chi^{a_{0,j}}g_{k}- \sum_{l=1}^{D}P_{j,k,l}
\, g_{l} \in I$. Evaluating the right equation in \eqref{eq:47} at
$t=0$ we obtain
\begin{equation}
  \label{eq:92}
\chi^{a_{0,j}}g_{k}= \sum_{l=1}^{D}P_{j,k,l}\Big|_{t=0} \, g_{l} \in
(f_{1},\dots, f_{n})\subset \L[M].
\end{equation}
Since the $g_{k}$'s are a basis of $R$ over $\C$, they are also a
basis of $R\otimes \L$ over $\L$. It then follows from \eqref{eq:47}
and \eqref{eq:92} that $P_{j,k,l}\big|_{t=0}=p_{j,k,l}\in \C$ for all
$j,k,l$. 

  Let $m_{f_{0}}$ and $m_{f_{0}}$ respectively denote the
  matrix of the multiplication by $F_{0}$ on $S$ and by $f_{0}$ on
  $R$, with respect to the basis $g_{k}$, $k=1,\dots,
  D$. 
Then, $$m_{f_{0}+tF_{0}}\big|_{t=0}= (m_{f_{0}}+t
m_{F_{0}})\big|_{t=0}=m_{f_{0}},$$ and hence 
  \begin{displaymath}
    \norm_{S/\L(t)}(F_{0})\big|_{t=0}=\det(m_{f_{0}+tF_{0}}\big| _{t=0})=
    \det(m_{f_{0}})=\norm_{R/\C}(f_{0}),
  \end{displaymath}
as stated.
\end{proof}

We finally prove the results stated in the introduction. 

\begin{proof}[Proof of Theorem \ref{thm:3} and Corollary \ref{cor:4}]
  By \eqref{eq:70}, the hypothesis $\Res_{\ov \bfcA_{v }}(\ov f_{v
  })\ne 0$ implies that the family $f_{i,v }$, $i=1,\dots, n$, has
  no roots in $\T_{M}$. Then, by Bernstein's theorem in~\eqref{eq:46},
  the variety $V(f_{1}, \dots, f_{n})$ is of dimension 0 and
\begin{displaymath}
  \dim_{\C}(\C[M]/(f_{1},\dots,
f_{n}))=\deg(Z(f_{1},\dots, f_{n}))=D.
\end{displaymath}
Then we can apply Lemma \ref{lemm:5}\eqref{item:2} and Remark
\ref{rem:4} to deduce that
\begin{multline}\label{eq:41}
 \prod_{\xi} f_{0}(\xi)^{m_{\xi}}= \norm_{R/\C}({f_{0}})  \\=
 \norm_{S/\C\otimes \F(t)}({f_{0}+tF_{0}})\big|_{t=0}= \bigg( \prod_{\xi}
(f_{0}(\xi)+t F_{0}(\xi))^{m_{\xi}}\bigg) \Big|_{t=0}.
\end{multline}
Applying the Poisson formula \eqref{poiss} to the general Laurent
polynomials $f_{i}+tF_{i}$, $i=0,\dots, n$, we deduce that the second
product in \eqref{eq:41} is equal to
\begin{equation}
  \label{eq:93}
\pm  {\Res_{\bfcA}(\bff+t\bfF)}\cdot{\prod_{v}\Res_{\ov \bfcA_{v}}(\ov
  \bff_{v}+t\ov \bfF_{v})^{h_{\cA_{0}}(v)}},
\end{equation}
the product being over the primitive vectors $v\in N$.  Theorem
\ref{thm:3} then follows from~\eqref{eq:41} by evaluation
\eqref{eq:93} at $t=0$.

Corollary \ref{cor:4} follows from Theorem \ref{thm:3} applied to the
 supports $\{a\}, \cA_{1},\dots, \cA_{n}$.
\end{proof}

From the Poisson formula, we can deduce a number of other properties for
the sparse resultant.  The following is the product formula for the
addition of supports.

\begin{corollary}\label{cor:3}
  Let $\cA_{0},\cA_{0}', \cA_{1}, \dots, \cA_{n}\subset M$ be nonempty finite
  subsets and $F_0, F'_0$, $F_{1},\dots, F_{n}$  the
  general Laurent polynomials with support $\cA_{0},\cA_{0}', \cA_{1},
  \dots, \cA_{n}$, respectively. Then
\begin{multline*}
\Res_{\cA_{0}+\cA_{0}', \cA_{1},\dots, \cA_{n}}(F_0 F_{0}', F_{1},\ldots,F_n)\\=
\pm\Res_{\cA_{0}, \cA_{1},\dots, \cA_{n}}(F_0, F_{1},\ldots,F_n)\cdot
\Res_{\cA_{0}', \cA_{1},\dots, \cA_{n}}(F_{0}', F_{1},\ldots,F_n).
  \end{multline*}
\end{corollary}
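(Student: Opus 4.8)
The plan is to apply the Poisson formula in Theorem \ref{thm:1} to each of the three sparse resultants appearing in the statement, and then compare. First I would note that the directional data of the family on the right-hand side coincides with that of the two families on the left: since $(\cA_{0}+\cA_{0}')_{v}= \cA_{0,v}+\cA_{0,v}'$ for every $v\in N$, the support function satisfies $h_{\cA_{0}+\cA_{0}'}(v)= h_{\cA_{0}}(v)+h_{\cA_{0}'}(v)$, and the initial form factors as $(F_{0}F_{0}')_{v}=F_{0,v}F_{0,v}'$. Crucially, none of $\cA_{0},\cA_{0}',\cA_{0}+\cA_{0}'$ intervenes in the directional sparse resultants $\Res_{\ov\bfcA_{v}}(\ov\bfF_{v})$, which depend only on $\cA_{1},\dots,\cA_{n}$; the same family $\ov\bfcA=(\cA_{1},\dots,\cA_{n})$ and the same general polynomials $\ov\bfF=(F_{1},\dots,F_{n})$ appear in all three applications of the formula.

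Concretely, write $\F=\Q(\bfu_{1},\dots,\bfu_{n})$ and let $\xi$ range over the roots in $\T_{M,\ov\F}$ of $F_{1},\dots,F_{n}$, with multiplicities $m_{\xi}$ (these are common to all three cases). By Theorem \ref{thm:1},
\begin{align*}
\Res_{\cA_{0}+\cA_{0}',\ov\bfcA}(F_{0}F_{0}',\ov\bfF)
&= \pm \prod_{v}\Res_{\ov\bfcA_{v}}(\ov\bfF_{v})^{-h_{\cA_{0}+\cA_{0}'}(v)}\cdot\prod_{\xi}(F_{0}F_{0}')(\xi)^{m_{\xi}}\\
&= \pm \prod_{v}\Res_{\ov\bfcA_{v}}(\ov\bfF_{v})^{-h_{\cA_{0}}(v)-h_{\cA_{0}'}(v)}\cdot\prod_{\xi}F_{0}(\xi)^{m_{\xi}}\cdot\prod_{\xi}F_{0}'(\xi)^{m_{\xi}},
\end{align*}
where I used $h_{\cA_{0}+\cA_{0}'}=h_{\cA_{0}}+h_{\cA_{0}'}$ and that $(F_{0}F_{0}')(\xi)=F_{0}(\xi)F_{0}'(\xi)$ for each root $\xi$ (which makes sense since the roots lie in the torus, where all characters are invertible). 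On the other hand, applying Theorem \ref{thm:1} twice more gives
\begin{displaymath}
\Res_{\cA_{0},\ov\bfcA}(F_{0},\ov\bfF)\cdot\Res_{\cA_{0}',\ov\bfcA}(F_{0}',\ov\bfF)
= \pm \prod_{v}\Res_{\ov\bfcA_{v}}(\ov\bfF_{v})^{-h_{\cA_{0}}(v)-h_{\cA_{0}'}(v)}\cdot\prod_{\xi}F_{0}(\xi)^{m_{\xi}}\cdot\prod_{\xi}F_{0}'(\xi)^{m_{\xi}}.
\end{displaymath}
The two right-hand sides agree up to sign, so the two sparse resultants agree up to sign, which is the claim.

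One point requires a little care: the Poisson formula as stated presumes $\dim(\Delta)=n$ (the positive-dimensional case reduces to $1=\pm1$), so I should first dispose of the degenerate cases. If $\dim(\ov\Delta)\le n-2$ then all three sparse resultants and all directional sparse resultants are $1$ by Proposition \ref{prop:3}(\ref{item:16}), and the identity is trivial. If $\dim(\ov\Delta)=n-1$ the analysis in the first paragraph of the proof of Theorem \ref{thm:1} applies verbatim to each of the three families $(\cA_{0},\ov\bfcA)$, $(\cA_{0}',\ov\bfcA)$, $(\cA_{0}+\cA_{0}',\ov\bfcA)$, and each sparse resultant equals $\pm 1$. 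In the remaining case $\dim(\ov\Delta)=n$ the family $\ov\bfF$ has finitely many torus roots and the computation above goes through. I expect the only genuinely delicate point to be bookkeeping the identity of supports and root sets across the three instances of Theorem \ref{thm:1}—namely checking that $\ov\bfcA$ and its directional resultants, as well as the root set and multiplicities $\{(\xi,m_{\xi})\}$, are literally the same in all three—after which the algebra is immediate.
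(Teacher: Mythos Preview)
Your proposal is correct and follows exactly the paper's approach: apply Theorem \ref{thm:1} to each of the three resultants and use the additivity $h_{\cA_{0}+\cA_{0}'}=h_{\cA_{0}}+h_{\cA_{0}'}$ together with $(F_{0}F_{0}')(\xi)=F_{0}(\xi)F_{0}'(\xi)$. One small remark: your separate treatment of the degenerate cases is unnecessary (and the case split on $\dim(\ov\Delta)$ rather than $\dim(\Delta)$ is slightly off), since Theorem \ref{thm:1} is stated and proved without any dimensional hypothesis---the formula \eqref{poiss} already holds in all cases, so you can simply invoke it three times and compare.
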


\begin{proof}
  This follows from Theorem \ref{thm:1} and the additivity of support
  functions with respect to the addition of sets.
\end{proof}





We devote the rest of this section to the proof of Theorem \ref{thm:4}
in the introduction.  Let $n\ge 1$ and set $M=\Z^{n}$ and let be the
general Laurent polynomials $F_{i}\in \Q[\bfu_{i}][t_{1}^{\pm1},
\dots, t_{n}^{\pm1}]$ with support $\cA_{i}$, $i=1,\dots, n$. Let $
\Res_{\cA_{1}, \dots, \cA_{n}}^{t_{n}}$ as defined in \eqref{eq:42}.

\begin{proposition} \label{prop:12} Let notation be as above. Then, there
  exists $d\in\Z$ such that
  \begin{equation}
    \label{eq:80}
\Res_{\cA_{1}, \dots, \cA_{n}}^{t_{n}}= \pm t_{n}^d \, \Res_{\{\bfzero,
  \bfe_{1}\}, \cA_{1}, \dots, \cA_{n}}(z-t_{n},F_1,\ldots,F_n)
\big|_{z=t_{n}},
  \end{equation}
with  $\bfe_{n}=(0,\dots, 0, 1)\in \Z^{n}$.
\end{proposition}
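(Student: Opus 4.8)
The plan is to relate the two resultants through the multiprojective resultant of the toric cycle attached to the family $\bfcA=(\{\bfzero,\bfe_n\},\cA_1,\dots,\cA_n)$ of supports in $M=\Z^n$, using that intersecting a cycle with a Cartier divisor corresponds to evaluating a resultant (the property recalled after Definition \ref{def:6}). Since $\#\{\bfzero,\bfe_n\}=2$, the first factor of the ambient multiprojective space $\P^{\bfc}$ is $\P^1$, and the monomial map sends $\xi\in\T_M$ to $\big((1:\chi^{\bfe_n}(\xi)),\varphi_{\cA_1,\dots,\cA_n}(\xi)\big)$. The degenerate cases (where $\Res_{\bfcA}=1$, or $\dim\Delta<n$) are treated directly, since then $\Res^{t_n}_{\cA_1,\dots,\cA_n}$ is a monomial in $t_n$; so assume $\dim\Delta=n$, hence $Z_{\bfcA}=(\varphi_{\bfcA})_*\T_M$ is pure of dimension $n$. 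By Proposition \ref{prop:1}, $\Res_{\{\bfzero,\bfe_n\},\cA_1,\dots,\cA_n}=\pm\Res_{\bfe_0,\dots,\bfe_n}(Z_{\bfcA})$. After permuting the indices so that the $\P^1$-slot becomes the last one, I apply the intersection-into-evaluation property over the factorial base ring $\Q[z]$ to the degree-$\bfe_0$ polynomial $zx_{0,0}-x_{0,1}$; as $z$ is transcendental, $\div(zx_{0,0}-x_{0,1})$ — the pullback to $\P^{\bfc}$ of the point $(1:z)\in\P^1$ — meets $Z_{\bfcA}$ properly, and I obtain $\Res_{\{\bfzero,\bfe_n\},\cA_1,\dots,\cA_n}(z-t_n,F_1,\dots,F_n)=\lambda\cdot\Res_{\bfe_1,\dots,\bfe_n}\big(Z_{\bfcA}\cdot\div(zx_{0,0}-x_{0,1})\big)(F_1,\dots,F_n)$ with $\lambda\in\Q(z)^\times$.

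Next I compute $Z_{\bfcA}\cdot\div(zx_{0,0}-x_{0,1})$. Passing to a proper toric model $\Phi_{\bfcA}\colon X_\Sigma\to\P^{\bfc}$ (Lemma \ref{lemm:11}) and using the projection formula \eqref{eq:38}, this cycle equals $(\Phi_{\bfcA})_*\big(X_\Sigma\cdot\Phi_{\bfcA}^*\div(zx_{0,0}-x_{0,1})\big)$. The pulled-back divisor is effective, restricts on $\T_M$ to the reduced hypersurface $\{\chi^{\bfe_n}=z\}$, and — again because $z$ is transcendental, so the $\P^1$-coordinate $(1:\chi^{\bfe_n})$ of $\Phi_{\bfcA}$ is never constantly $(1:z)$ on a toric orbit — contains no $\T_M$-invariant divisor of $X_\Sigma$; hence $X_\Sigma\cdot\Phi_{\bfcA}^*\div(zx_{0,0}-x_{0,1})$ is the closure of $\{\chi^{\bfe_n}=z\}$, with multiplicity one. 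Its image under $\Phi_{\bfcA}$ is therefore the multiprojective toric cycle attached to the map $\xi'\mapsto\big(z^{(a_{i,j})_n}\chi^{\varpi(a_{i,j})}(\xi')\big)_{i,j}$ of $\T_{M/\Z\bfe_n}\cong\T_{\Z^{n-1}}$, which differs from $\varphi_{\varpi(\cA_1),\dots,\varpi(\cA_n)}$ only by the diagonal torus automorphism scaling the $(i,j)$-coordinate by $z^{(a_{i,j})_n}$ — that is, exactly the twist turning the general linear form $\sum_ju_{i,j}x_{i,j}$ into $F_i(\bft')\big|_{t_n=z}$. Since immersion into the $\P^1$-factor does not affect a resultant taken with respect to the other factors, Proposition \ref{prop:1} applied over $\Q(z)$ gives $\Res_{\bfe_1,\dots,\bfe_n}\big(Z_{\bfcA}\cdot\div(zx_{0,0}-x_{0,1})\big)(F_1,\dots,F_n)=\pm\Res_{\varpi(\cA_1),\dots,\varpi(\cA_n)}\big(F_1(\bft')|_{t_n=z},\dots,F_n(\bft')|_{t_n=z}\big)=\pm\Res^{t_n}_{\cA_1,\dots,\cA_n}\big|_{t_n=z}$. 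Combining with the first step, $\Res_{\{\bfzero,\bfe_n\},\cA_1,\dots,\cA_n}(z-t_n,F_1,\dots,F_n)=\mu(z)\cdot\Res^{t_n}_{\cA_1,\dots,\cA_n}\big|_{t_n=z}$ for some $\mu(z)\in\Q(z)^\times$.

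Finally I pin $\mu(z)$ down to $\pm z^{-d}$. The left-hand side, being the primitive integer polynomial $\Res_{\{\bfzero,\bfe_n\},\cA_1,\dots,\cA_n}$ — which is multihomogeneous in $(u_{0,0},u_{0,1})$ — specialized at $(u_{0,0},u_{0,1})=(z,-1)$, is primitive over $\Z$, and it has no factor $g\in\Q[z]$ of positive degree: for every algebraic $\theta$ the evaluation $\Res_{\{\bfzero,\bfe_n\},\cA_1,\dots,\cA_n}(\theta-\chi^{\bfe_n},F_1,\dots,F_n)$ is a nonzero polynomial in $\bfu$, since the coordinates $\chi^{\bfe_n}(\xi)$ of the roots of the generic system $F_1,\dots,F_n$ are non-constant algebraic functions of $\bfu$. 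On the other side, $\Res^{t_n}_{\cA_1,\dots,\cA_n}$ is obtained from the primitive integer polynomial $\Res_{\varpi(\cA_1),\dots,\varpi(\cA_n)}$ by the substitution $v_{i,b}\mapsto\sum_{\varpi(a)=b}u_{i,a}t_n^{(a)_n}$; choosing one point of $\cA_i$ in each fibre of $\varpi$ and setting the remaining $u_{i,a}$ to $0$ turns this into a monomial substitution, which preserves $\Z$-primitivity up to a power of $t_n$ and can only introduce monomial factors, so $\Res^{t_n}_{\cA_1,\dots,\cA_n}$ is, up to a power of $t_n$, a primitive integer polynomial with no positive-degree factor in $\Q[t_n]$. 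Gauss' lemma over $\Z[z]$ then forces $\mu(z)=\pm z^{-d}$ for some $d\in\Z$, and renaming $z$ as $t_n$ gives the statement.

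The step I expect to be the main obstacle is the identification of $Z_{\bfcA}\cdot\div(zx_{0,0}-x_{0,1})$ in the second paragraph: one must verify carefully that $\Phi_{\bfcA}^*\div(zx_{0,0}-x_{0,1})$ contains no toric boundary divisor — this is precisely why no extraneous boundary component appears and why the right-hand side is reached with only a monomial discrepancy — and that the $z$-twist is exactly the one producing the coefficients of $F_i(\bft')\big|_{t_n=z}$. The transcendence of $z$ over $\Q$ is what makes both the proper-intersection assertions and the primitivity argument work.
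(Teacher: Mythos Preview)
Your proof is correct and follows a genuinely different route from the paper's. The paper argues by induction on $n$: it expands both $\Res^{t_n}_{\cA_1,\dots,\cA_n}$ and $\Res_{\{\bfzero,\bfe_n\},\cA_1,\dots,\cA_n}(z-t_n,\ov\bfF)$ via the Poisson formula (Theorem~\ref{thm:1}), matches the products over roots directly, and matches the directional factors using the inductive hypothesis for primitive $w\in\Z^n$ with $w_n=0$ and Example~\ref{exm:5} for $w_n\neq 0$; this recursion also yields the explicit expression~\eqref{eq:105} for $d$ that underlies Remark~\ref{rem:2}. You instead bypass both the induction and the Poisson formula: the evaluation-equals-intersection property of multiprojective resultants reduces the question to computing $Z_{\bfcA}\cdot\div(zx_{0,0}-x_{0,1})$, which you identify (via the proper toric model and the transcendence of $z$) with a diagonally twisted copy of $Z_{\varpi(\cA_1),\dots,\varpi(\cA_n)}$; the unspecified unit in $\Q(z)^\times$ is then forced to be $\pm z^{-d}$ by a Gauss-lemma primitivity argument. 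Your approach is more self-contained---it uses only the material of \S\ref{sec:preliminaries} and Proposition~\ref{prop:1}---but it gives no formula for $d$.

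Two minor points worth tightening. First, the degenerate claim that $\Res^{t_n}_{\cA_1,\dots,\cA_n}$ is a monomial in $t_n$ when $\dim\Delta<n$ is correct but not argued: since $\Z\bfe_n\subset L_{\bfcA}$, the hypothesis forces $\rank(L_{\varpi(\cA_1),\dots,\varpi(\cA_n)})<n-1$, whence $\Res_{\varpi(\cA_1),\dots,\varpi(\cA_n)}=\pm1$ by Proposition~\ref{prop:3}\eqref{item:16}. Second, your justification that $\Res_{\bfcA}(\theta-t_n,\ov\bfF)\neq 0$ for algebraic~$\theta$ via ``non-constant root coordinates'' implicitly leans on the Poisson formula; a cleaner route is to note that identical vanishing would force the irreducible polynomial $\Elim_{\bfcA}$ to lie in $(u_{0,0}+\theta u_{0,1})$, hence to be a scalar multiple of $u_{0,0}+\theta u_{0,1}$, contradicting that the unique essential subfamily of $(\{\bfzero,\bfe_n\},\cA_1,\dots,\cA_n)$ is never $\{0\}$ alone (since $\rank L_{\{\bfzero,\bfe_n\}}=1=\#\{0\}$).
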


\begin{proof} 
  Let $\ov \bfcA=(\cA_{1},\dots, \cA_{n})$, $\ov \bfF=(F_{1},\dots,
  F_{n})$ and $\ov \bfu=\{\bfu_{1},\dots,
  \bfu_{n}\}$ as before, and set for short
  \begin{displaymath}
  R=\Res_{\ov\bfcA}^{t_{n}}\in \Q[\ov
  \bfu][t_n^{\pm1}] \quad \text{ and } \quad E= \Res_{\{\bfzero,
  \bfe_{1}\}, \ov \bfcA}(z-t_{n},\ov \bfF)
\in \Q[\ov \bfu][z].  
  \end{displaymath}
  Set also $\varpi \colon \R^{n}\rightarrow \R^{n-1}$ for the
  projection onto the first $n-1$ coordinates of $\R^{n}$.

  We will prove the statement by induction on the number of
  variables. When $n=1$, 
  \begin{equation}\label{eq:104}
R=\pm F_{1}\quad \text{ and } \quad E=z^{-\ord_{t_{1}}(F_{1})}F_{1}(z).
  \end{equation}
  These identities can be respectively proven using Example
  \ref{exm:5} and the formula \eqref{poiss}. This implies
  \eqref{eq:80} in this case, with $d=\ord_{t_{1}}(F_{1})$.

  Suppose now that $n\ge2$. Applying the  formula \eqref{poiss}
  both to $R$ and to $E$, we get
  \begin{align*}
    R&= \pm 
  \prod_{v  }\Res_{\varpi(\cA_{1})_{v},\dots,\varpi(\cA_{n-1})_{v  }}(F_{1,v  },\dots,F_{n-1,v  })^{-h_{\varpi(\cA_{n})}(v  )}
   \prod_{\xi} F_{n}(\xi)^{m_{\xi}} ,
\\ 
E(t_{n})&= \pm 
  \prod_{w }\Res_{\{\bfzero, \bfe_{n}\}_{w }, \cA_{1,w },\dots,\cA_{n-1,w }}((z-t_{n})_{w },F_{1,w },\dots,F_{n-1,w })^{-h_{\cA_{n}}(w )}
  \prod_{\eta} F_{n}(\eta)^{m_{\eta}}.
  \end{align*}
  In these formulae, the first product is over all primitive vectors
  $v  $ in $\Z^{n-1}$, the second is over the roots $\xi$ of
  $F_{1},\dots, F_{n-1}$ in $(\ov {\C(\bfu_{1},\dots,
    \bfu_{n-1})(t_{n})}^{\times})^{n-1}$, the third is over all
  primitive vectors $w $ in $\Z^{n}$, and the fourth is over the
  roots $\eta$ of $z-t_{n},F_{1},\dots, F_{n-1}$ in~$(\ov
  {\C(\bfu_{1},\dots, \bfu_{n-1})(z)}^{\times})^{n}$.

Using Remark \ref{rem:4}, we can verify that 
\begin{displaymath}
  \prod_{\xi} F_{n}(\xi)^{m_{\xi}}= \prod_{\eta}
  F_{n}(\eta)^{m_{\eta}} \bigg|_{z=t_{n}}
 \end{displaymath}
Let $w =(w_{1},\dots, w_{n})\in \Z^{n}$.  If $w $ is of the form $(v ,0 )$ with $v  \in \Z^{n-1}$, then
 ${h_{\cA_{n}}(w )}=h_{\varpi(\cA_{n})}(v  )$.  Applying the inductive
 hypothesis, we get that, in this case,
\begin{multline}\label{eq:101}
 \Res_{\varpi(\cA_{1})_{v  },\dots,\varpi(\cA_{n-1})_{v  }}(F_{1,v  },\dots,F_{n-1,v  })^{-h_{\varpi(\cA_{n})}(v  )}\\=  
t_{n}^{-h_{\cA_{n}}(w )d_{w}}\Res_{\{\bfzero, \bfe_{n}\},
  \cA_{1,w },\dots,\cA_{n-1,w }}(z-t_{n},F_{1,w },\dots,F_{n-1,w })^{-h_{\cA_{n}}(w )}\Big|_{z=t_{n}}
\end{multline}
with $d_{w}\in \Z$. On the other hand, if $w_{n}\ne0$, then 
\begin{displaymath}
  (z-t_{n})_{w }=
  \begin{cases}
    z& \text{ if } w_{n}>0,\\
-t_{n}& \text{ if } w_{n}<0.
  \end{cases}
\end{displaymath}
Example \ref{exm:5} implies that
\begin{equation} \label{eq:100}
  \Res_{\{\bfzero, \bfe_{n}\}_{w },
    \cA_{1,w },\dots,\cA_{n-1,w }}((z-t_{n})_{w },F_{1,w },\dots,F_{n-1,w })^{-h_{\cA_{n}}(w )}=\pm z^{c_{w}}
\end{equation}
with
\begin{displaymath}
c_{w}=
  \begin{cases}
    -h_{\cA_{n}}(w)\MV_{\Z^{n}\cap w^{\bot}}(\Delta_{1,w}, \dots, \Delta_{n,w})& \text{ if } w_{n}>0,\\
0& \text{ if } w_{n}<0,
  \end{cases}
\end{displaymath}
where $\Delta_{i,w}$ is the face in the direction $w$ of the convex
hull of $\cA_{i}$. The statement then follows from \eqref{eq:101} and
\eqref{eq:100} with
\begin{equation}
  \label{eq:105}
  d=-\sum_{w} h_{\cA_{n}}(w )d_{w} -
\sum_{w} h_{\cA_{n}}(w)\MV_{\Z^{n}\cap w^{\bot}}(\Delta_{1,w},
  \dots, \Delta_{n,w}) \in \Z,
\end{equation}
for $d_{w}$ as in \eqref{eq:101}.
\end{proof}

\begin{remark} \label{rem:2} The exponent $d$ in \eqref{eq:80} can be
  made explicit in terms of mixed integrals in the sense of
  \cite[Definition 1.1]{PS:rbke} or, equivalently, shadow mixed
  volumes as in \cite[Definition 1.7]{Esterov:emfb}. Indeed, let
  $\iota\colon \R^{n}\to \R^{n}$ given by
  $(x_{1},\dots,x_{n-1},x_{n})\mapsto
  (x_{1},\dots,x_{n-1},-x_{n})$. Then $d$ coincides with the mixed
  integral of the family of concave functions on
  $\varpi(\Delta_{i})\to \R$, $i=1,\dots,n$, parametrizing the upper envelope of
  $\iota(\Delta_{i})$. This can be shown by induction on the number of
  variables $n$ by using \eqref{eq:104}, plus the recursive formulae
  \eqref{eq:105} and \cite[(8.6)]{PS:rbke}.
\end{remark}

\begin{proof}[Proof of Theorem \ref{thm:4}] 
This follows directly from Proposition \ref{prop:12} and Theorem \ref{thm:3}.  
\end{proof}

\section{Comparison with previous
  results and further examples} \label{sec:example-comp-with}

Using the relation between sparse resultants and sparse eliminants
given in
Proposition~\ref{prop:10}, we can easily translate any  results for sparse
resultants in terms of sparse eliminants and viceversa: with 
notation
as in  Proposition~\ref{prop:10}, we have that 
\begin{displaymath}
  \Res_{\bfcA}= \pm \Elim_{\bfcA}^{d_{\bfcA}}
\end{displaymath}
with 
\begin{equation*} 
d_{\bfcA}=
\begin{cases}
 [L_{\bfcA_{J}}^{\sat}:L_{\bfcA_{J}}]
    \MV_{M/L_{\bfcA_{J}}^{\sat}}(\{\varpi(\Delta_{i}) \}_{i\notin J})
    & \text{ if } \exists ! \text{   essential subfamily } \bfcA_{J}, \\
0 & \text{ otherwise.}
\end{cases}
\end{equation*}

In particular, the Poisson formula in Theorem \ref{thm:1} can be
translated in terms of sparse eliminants as follows. Let  notation be
as in that result. For each primitive vector $v \in N$ we choose $b_{i,v}\in
M$ such that $\cA_{i,v }-b_{i,v }\subset M\cap v ^{\bot}\simeq \Z^{n-1}$,
$i=1,\dots, n$, and we set
\begin{equation*}
d_{\ov \bfcA_{v}}:=  d_{\cA_{1,v}-b_{1,v},\dots,\cA_{n,v}-b_{n,v}}.
\end{equation*}
Then, the formula \eqref{poiss} can be rewritten as
 \begin{equation}\label{eq:19}
  \Elim_{\bfcA}^{d_{\bfcA}}(\bfF)= \pm
  \prod_{v}\Elim_{\ov\bfcA_{v}}(\ov \bfF_{v})^{-d_{\ov\bfcA_{v}}h_{\cA_{0}}(v)}
  \cdot \prod_{\xi} F_{0}(\xi)^{m_{\xi}} .
\end{equation}

On the other hand, the product formula in \cite[Theorem 1.1]{PS93} can
be reformulated with our notation as
\begin{equation}
  \label{eq:82}
    \Elim_{\bfcA}= \lambda\cdot
  \prod_{v}\Elim_{\ov \bfcA_{v}}(\ov \bfF_{v})^{-\delta_{v}}
  \cdot \prod_{\xi} F_{0}(\xi)^{m_{\xi}} ,
\end{equation}
with $\lambda\in \Q^{\times}$ and where, for each primitive vector
$v\in N$, the exponent $\delta_{v}$ is given by
\begin{displaymath}
  \delta_{v}=
  \begin{cases}
[L^{\sat}_{\ov
  \bfcA_{v}}:L_{\ov \bfcA_{v}}] & \text{ if } v \text{ is normal to a facet of }
\ov \Delta, \\
 0 & \text{ otherwise.}     
  \end{cases}
\end{displaymath}
In \cite[Theorem 1.1]{PS93}, it is implicitly assumed that
$L_{\bfcA}=\Z^{n}$ and that the family $\bfcA$ is essential. These
assumptions imply that $d_{\bfcA}=1$. Hence, \eqref{eq:82} actually
holds if and only if, for every primitive vector $v\in N$ such that
$\Elim_{\ov \bfcA_{v}}\ne 1$,
\begin{equation*}
  \delta_{v}= d_{\ov\bfcA_{v}}h_{\cA_{0}}(v) .
\end{equation*}
This set of equalities does hold when, for each $v$ such that $\ov
\bfcA_{v}$ has a unique essential subfamily, this subfamily actually
coincides with $\ov \bfcA_{v}$. The Pedersen-Sturmfels product formula is
correct in that case, which includes the unmixed case
when $\cA_{0}=\dots=\cA_{n}=\cA$ for a nonempty finite subset
$\cA\subset \Z^{n}$ such that $L_{\cA}=\Z^{n}$.

Example \ref{exm:4} in the introduction illustrates how \eqref{eq:82}
can fail in degenerate cases.  In the setting of this example,
$L_{\bfcA}=\Z^{2}$ and $\bfcA$ is essential. However, for the vector
$(1,0)$, the unique essential subfamily $\ov\bfcA_{(1,0)}$ is the
point $\{(-1,0)\}$.  The exponent of the directional eliminant
$\Elim_{\cA_{1,(1,0)},\cA_{2,(1,0)}}=u_{1,1}$ in the formula
\eqref{eq:19} is  the 1-dimensional volume of the segment
$\conv((-1,0), (-1,2))$, which is equal to $2.$ On the other hand,
$\delta_{(1,0)}=1$ because $L_{\ov\bfcA_{(1,0)}}$ is saturated, and so 
\eqref{eq:82} fails in this case.

In \cite{Min03}, Minimair reformulated \eqref{eq:82} in the course of
his study of sparse resultants under vanishing coefficients, but this
reformulation has also flaws. In particular, the definition of the
exponent $\e_{\cA_{1},\dots, \cA_{n}}$ in \cite[Remark~3]{Min03}
depends on the construction of a supplement of the sublattice
$L_{\bfcA_{J}}$ associated to an essential subfamily of supports: if
this sublattice is not saturated, the supplement does not exists and
the exponent cannot be defined.  Moreover, \cite[Theorem 8]{Min03} is
meaningless in many situations as it leads to expressions of the form
$\frac00$ like the one shown in Example \ref{exm:4}.

Next we give two further examples. The first one shows that the
condition that $\bfcA$ is essential, which is implicitly assumed in
\cite{PS93}, is necessary for  \eqref{eq:82} to hold.

\begin{example} \label{exm:1} Let $M=\Z,$ and set $\cA_{0}=\{0\}$ and
  $\cA_{1}=\{0,1,2\}$.  Then $\cA_{0}$ is the unique essential
  subfamily and $\Res_{\bfcA}= \pm u_{0,0}^{2}$. We also have that
  $h_{\cA_{0}}(v)= 0 $ for all $v\in N$. Hence, the Poisson formula
  \eqref{poiss} reads in this case as
\begin{displaymath}
  \pm
u_{0,0}^{2} = \pm F_{0}(\xi_{1}) F_{0}(\xi_{2}), 
\end{displaymath}
where $\xi_{i}$ are the roots of $F_{1}$. On the other hand,
$\Elim_{\bfcA}= \pm u_{0,0}$ and so \eqref{eq:82}
does not hold.
    \end{example}

    The next example exhibits  a phenomenon similar to the one in Example
    \ref{exm:4}.

    \begin{example} \label{exm:2}
Let $M=\Z^{2}$ and set
\begin{displaymath}
\cA_{0}=\{(0,1),(1,0)\}, \quad       \cA_{1}=\{(0,0),(1,0)\}, \quad       
\cA_{2}=\{(0,0),(0,1), (0,2)\}. 
\end{displaymath}
Then $L_{\bfcA}=\Z^{2}$ and $\bfcA=(\cA_{0},\cA_{1}, \cA_{2})$ is
essential. It can be verified that
\begin{displaymath}
  \Res_{\bfcA}= u_{0,0}^{2}u_{1,0}^{2}u_{2,0} +
  u_{0,0}u_{0,1}
  u_{1,0}u_{1,1}u_{2,1} + u_{0,1}^{2}u_{1,1}^{2}u_{2,2},
\end{displaymath}
We can verify that the formula \eqref{poiss} reads in this case as
\begin{displaymath}
  \Res_{\bfcA}= \pm u_{1,1}^{2}u_{2,2}F_{0}(\xi_{1}) F_{0}(\xi_{2})
\end{displaymath}
where $\xi_{i}$ are the roots of the family $F_{1},F_{2}$.  We have
that $\Elim_{\bfcA}=\Res_{\bfcA}$ but the formula \eqref{eq:82} gives the exponent
1 to the directional sparse eliminant $u_{1,1}$. Hence, this formula also fails in this case.
    \end{example}

    The product formula for the addition of supports in Corollary
    \ref{cor:3} can also be rewritten in terms of sparse
    eliminants. Indeed, with notation as in that statement, set
    $\bfcA=(\cA_{0},\cA_{1},\cdots, \cA_{n})$,
    $\bfcA'=(\cA_{0}',\cA_{1},\cdots, \cA_{n})$,
    $\bfF=(F_{0},F_{1},\dots, F_{n})$ and $\bfF'=(F_{0}',F_{1},\dots,
    F_{n})$ for short. Then
  \begin{equation*}
\Elim_{\cA_{0}+\cA_{0}', \cA_{1},\dots, \cA_{n}}(F_0 F_{0}', F_{1},\ldots,F_n)^{d_{{\cA_{0}+\cA_{0}', \cA_{1},\dots, \cA_{n}}}}=
\pm\Elim_{\bfcA}( \bfF)^{d_{{\bfcA}}}\cdot \Elim_{\bfcA'}( \bfF')^{d_{{\bfcA'}}}.
  \end{equation*}
On the other hand, the analogous formula in \cite[Proposition
7.1]{PS93} can be reformulated with our notation as 
\begin{multline} \label{eq:81}
\Elim_{\cA_{0}+\cA_{0}', \cA_{1},\dots, \cA_{n}}(F_0 F_{0}',
F_{1},\ldots,F_n) \\ = \lambda\, \Elim_{\bfcA}( \bfF)^{[L_{{\bfcA}}:L_{\bfcA'}]}\cdot \Elim_{\bfcA'}( \bfF')^{[L_{{\bfcA}}:L_{\bfcA''}]}
\end{multline}
with $\lambda\in \Q^{\times}$. These two formulae are equivalent, up
to the scalar factor $\lambda$, in
the case when both $\bfcA'$ and $\bfcA''$ are essential. Otherwise,
\eqref{eq:81} might fail, as shown by the following example.

\begin{example} \label{exm:3} Let $M=\Z$ and set $\cA_{0}'=\{0\}$,
    $\cA_{0}''=\{0,1\}$ and $\cA_{1}=\{0,1,2\}$.  Then the formula in Corollary
    \ref{cor:3} reads in this case as
\begin{displaymath}
  \Elim_{\{0,1\},\{0,1,2\}}(u_{0,0}'(u_{0,0}''+u_{0,1}''x), f_{1}) = \pm
  {u}_{0,0}'^{\ 2} \, \Elim_{\{0,1\},\{0,1,2\}}(u_{0,0}''+u_{0,1}''x,f_{1}),
\end{displaymath}
since $\Res_{\{0\},\{0,1,2\}}={u}_{0,0}'^{\ 2}$.  However,
the formula \eqref{eq:81} gives the exponent 1 to the
sparse eliminant $\Elim_{\{0\},\{0,1,2\}}=u_{0,0}'$, instead of 2.
\end{example}

\newcommand{\noopsort}[1]{} \newcommand{\printfirst}[2]{#1}
\newcommand{\singleletter}[1]{#1} \newcommand{\switchargs}[2]{#2#1}
\def\cprime{$'$} \providecommand{\bysame}{\leavevmode\hbox
to3em{\hrulefill}\thinspace}
\providecommand{\MR}{\relax\ifhmode\unskip\space\fi MR } 
\providecommand{\MRhref}[2]{%
\href{http://www.ams.org/mathscinet-getitem?mr=#1}{#2} }
\providecommand{\href}[2]{#2}

\end{document}